\newtheorem{theorem}{Theorem}
\newtheorem{lemma}[theorem]{Lemma}
\newtheorem{proposition}[theorem]{Proposition}
\newtheorem{corollary}[theorem]{Corollary}
\newtheorem{definition}[theorem]{Definition}
\newtheorem{conjecture}[theorem]{Conjecture}
\newtheorem{question}[theorem]{Question}
\newtheorem{remark}[theorem]{Remark}
\newcommand{\eps}{\varepsilon}
\newcommand{\D}{\mathcal{D}}
\newcommand{\CC}{\mathbb C}
\newcommand{\QQ}{\mathbb Q}
\newcommand{\ZZ}{\mathbb Z}
\DeclareMathOperator{\gen}{gen}
\newcommand{\lek}{safe}
\newcommand{\lekone}{almost-safe}
\newcommand{\paren}[1]{\left( #1 \right)}
\newcommand{\ceil}[1]{\left\lceil #1 \right\rceil}
\newcommand{\abs}[1]{\left\lvert #1 \right\rvert}
\newcommand{\set}[1]{\left\{ #1 \right\}}
\newcommand{\fromto}[2]{\colon #1 \to #2}
\numberwithin{theorem}{section}
\title{Generic Classification and Asymptotic Enumeration of Dope Matrices}
\author{Ankit Bisain}
\date{}
\begin{document}

\pagebreak

\maketitle

\begin{abstract}
	For a complex polynomial $P$ of degree $n$ and an $m$-tuple of distinct complex numbers $\Lambda=(\lambda_1,\ldots,\lambda_m)$, the dope matrix $D_P(\Lambda)$ is defined as the $m \times (n+1)$ matrix $(c)_{ij}$ with $c_{ij} =1$ if $P^{(j)}(\lambda_i)=0$ and $c_{ij}=0$ otherwise. We classify the set of dope matrices when the entries of $\Lambda$ are algebraically independent, resolving a conjecture of Alon, Kravitz, and O'Bryant. We also provide asymptotic upper and lower bounds on the total number of $m \times (n+1)$ dope matrices. For $m$ much smaller than $n$, these bounds give an asymptotic estimate of the logarithm of the number of $m \times (n+1)$ dope matrices.
\end{abstract}

\section{Introduction}

Let $P \in \CC[x]$ be a polynomial of degree $n$, and let $\Lambda=(\lambda_1,\ldots,\lambda_m)$ be an $m$-tuple of distinct complex numbers. Following Alon, Kravitz, and O'Bryant \cite{dope}, we define the \emph{dope matrix of $P$ with respect to $\Lambda$} as the $m \times (n+1)$ matrix given by

\[D_P(\Lambda) \vcentcolon= \paren{c_{ij}}_{i \in [m], j \in [0,n]} \quad \text{where }c_{ij} = \begin{cases}1\ \text{if }P^{(j)}(\lambda_i)=0\\ 0\ \text{otherwise}. \end{cases}\]

Hence, the dope matrix tracks the pattern of common zeroes between $P$ and its derivatives -- that is, the set of ordered pairs $(i,j)$ for which we have $P^{(j)}(\lambda_i)=0$. A matrix is called \emph{dope} if it is of the form $D_P(\Lambda)$ for some $P$ and $\Lambda$. Denote by $\D_n^m$ the set of $m \times (n+1)$ dope matrices.

\

The set of possible dope matrices for fixed $\Lambda$ and $n$ may depend on the values of the $\lambda_i$. For example, if $P$ is a quadratic polynomial, then the conditions $P(\lambda_1)=0$, $P(\lambda_3)=0$, and $P'(\lambda_2)=0$ can be simultaneously satisfied only if $\lambda_2=\frac{\lambda_1+\lambda_3}{2}$. In the language of dope matrices, this is
\[D_P((\lambda_1,\lambda_2,\lambda_3)) = \begin{bmatrix}1 & 0 & 0\\ 0 & 1 & 0 \\ 1 & 0 & 0 \end{bmatrix} \implies \lambda_2=\frac{\lambda_1+\lambda_3}{2}.\]

For fixed $\Lambda$ and $n$, define
\[\D_n(\Lambda)=\{D_P(\Lambda) \mid P \in \CC[x],\ \deg P = n\}.\]

For any $a,b \in \CC$ with $b \neq 0$ and any $\QQ$-automorphism $\varphi$ of $\CC$, it can be shown (see \cite{multiplicity}) that
\begin{equation}\tag{*}
\D_n((\lambda_1,\ldots,\lambda_m))=\D_n((a+b\varphi(\lambda_1),\ldots,a+b\varphi(\lambda_m))). \label{eq:invariance}
\end{equation}

We denote the $m$-tuple on the right-hand side as
\[a+b\varphi(\Lambda) \vcentcolon= (a+b\varphi(\lambda_1),\ldots,a+b\varphi(\lambda_m)).\]

Define an \emph{affine algebraic dependence} of $\Lambda$ to be any rational-coefficient polynomial $P$ such that $P(a+b\lambda_1,a+b\lambda_2,\ldots,a+b\lambda_m)=0$ for all $a,b \in \CC$. For instance, $P(x_1,\ldots,x_m) \equiv 0$ is an affine algebraic dependence for any $\Lambda$, and $P(x_1,x_2,x_3)=x_2-\frac{x_1+x_3}{2}$ is an affine algebraic dependence of $(0,1,2)$. From now on, we refer to $P \equiv 0$ as the trivial affine algebraic dependence, and refer to all other affine algebraic dependences as nontrivial.

\

Call $\Lambda$ \emph{affinely algebraically independent} if it has no nontrivial affine algebraic dependences. Using (\ref{eq:invariance}), we can show (see Theorem \ref{thm:wellDefined}) that $\D_n(\Lambda)$ is the same for any affinely algebraically independent $m$-tuple $\Lambda$. We define $\D_n^{\gen(m)} \vcentcolon=  \D_n(\Lambda)$ for any choice of affinely algebraically independent $\Lambda$. (The notation $\D_n^{\gen(m)}$ is motivated by the fact that a generic $m$-tuple $\Lambda$ is affinely algebraically independent.) We remark that $\D_n^{\gen(m)}$ is natural to consider, as a generic $m$-tuple $\Lambda$ is affinely algebraically independent. In a related direction, Alon, Kravitz, and O'Bryant \cite[Theorem 6]{dope} have shown that $\abs{\D_n(\Lambda)}$ is maximized exactly when $\D_n(\Lambda)=\D_n^{\gen(m)}$.

\

In Section 2, we analyze $\D_n^{\gen(m)}$. Nathanson \cite[Theorem 2]{multiplicity} has characterized $\D_n^{\gen(m)}$ when $m=1$, and Alon, Kravitz, and O'Bryant \cite[Theorem 1]{dope} have characterized $\D_n^{\gen(m)}$ when $m=2$. We first generalize the aforementioned results to a complete characterization of $\D_n^{\gen(m)}$ for any $m$, resolving a conjecture of the latter paper \cite[Conjecture 15]{dope}.

\begin{theorem}\label{thm:genclass}
	For all positive integers $m,n$, the set $\D_n^{\gen(m)}$ consists of exactly the $m \times (n+1)$ matrices with $\{0,1\}$ entries such that for all $k \in [0,n]$, there are at most $k$ nonzero entries in the last $k+1$ columns.
\end{theorem}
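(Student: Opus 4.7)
The plan is to prove both inclusions using a linear-algebra framework on $V := \CC[x]_{\leq n}$. Each condition ``$P^{(j)}(\lambda_i) = 0$'' defines a linear functional $\phi_{i,j}: V \to \CC$, and ``$\deg P = n$'' corresponds to the nonvanishing of the leading-coefficient functional $L_n: P \mapsto [x^n]P$.

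For the necessity direction, the key observation is that the ones in columns $[n-k, n]$ of $D_P(\Lambda)$ are in bijection with the ones of $D_Q(\Lambda)$, where $Q := P^{(n-k)}$ is a polynomial of degree exactly $k$. The column constraint therefore reduces to the following claim: \emph{for any $Q \in \CC[x]$ of degree $k$ and any affinely algebraically independent tuple $\Lambda$, the total number of ones in $D_Q(\Lambda)$ is at most $k$}. I would prove this by induction on $k$, with base case $k = 0$ immediate (since $Q$ is a nonzero constant). For the inductive step, split on whether some $\lambda_{i_0}$ is a root of $Q$: if not, all ones correspond to conditions on $Q'$ (of degree $k-1$), so induction applies; if so, factor $Q = (x - \lambda_{i_0})R$ with $\deg R = k-1$, and analyze the induced conditions on $R$. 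The affine-invariance \eqref{eq:invariance} is used to normalize (e.g., $\lambda_{i_0} = 0$) and to rule out accidental $\CC$-linear dependencies among the $\phi_{i,s}$'s that could admit a degree-$k$ solution.

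For the sufficiency direction, let $M$ satisfy the column constraint and set $S := \{(i,j) : M_{ij} = 1\}$; by the column constraint at $k = n$, $|S| \leq n$. The solution space $W := \bigcap_{(i,j) \in S} \ker \phi_{i,j} \subseteq V$ has dimension at least $n + 1 - |S| \geq 1$. A polynomial $P \in W$ realizes $M$ iff $L_n(P) \neq 0$ and $\phi_{i',j'}(P) \neq 0$ for all $(i',j') \notin S$, which holds for generic $P \in W$ precisely when neither $L_n$ nor any $\phi_{i',j'}$ (for $(i',j') \notin S$) lies in $W^\perp = \mathrm{span}(\phi_{i,j} : (i,j) \in S)$. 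These non-inclusions follow from the column constraint together with the affine algebraic independence of $\Lambda$: a dependency $\phi_{i',j'} \in W^\perp$ would force $(i',j')$ into the support of every realization, and iterating this ``saturation'' would eventually produce a matrix violating the column constraint, contradicting the necessity direction.

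The main obstacle is the inductive step for the key claim in necessity, particularly the case where $Q$ has a root $\lambda_{i_0}$ among the $\lambda_i$'s. After factoring $Q = (x - \lambda_{i_0})R$, the derived conditions on $R$ at $\lambda_i$ ($i \neq i_0$) are not of the form ``$R^{(s)}(\lambda_i) = 0$'' but mix $R^{(s)}(\lambda_i)$ and $R^{(s-1)}(\lambda_i)$, since $Q^{(s)}(\lambda_i) = (\lambda_i - \lambda_{i_0}) R^{(s)}(\lambda_i) + s R^{(s-1)}(\lambda_i)$. A careful linear-algebra argument, using the affine-invariance \eqref{eq:invariance} to normalize and the induction hypothesis to bound the resulting mixed system, is required.
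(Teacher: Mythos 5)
Your high-level setup is reasonable, and the reformulation of necessity as ``$D_Q(\Lambda)$ has at most $k$ ones when $\deg Q = k$ and $\Lambda$ is affinely algebraically independent'' is a correct restatement of the column condition. But the proposal has two genuine gaps, one in each direction. For necessity, the inductive step in your Case~2 does not close, and the sentence where you acknowledge the obstacle is where the entire difficulty lives. After factoring $Q = (x-\lambda_{i_0})R$, the vanishing conditions at $\lambda_i \neq \lambda_{i_0}$ are $(\lambda_i-\lambda_{i_0})R^{(j)}(\lambda_i) + jR^{(j-1)}(\lambda_i) = 0$; these are not of the form $R^{(s)}(\lambda_i)=0$, so they are not ones of $D_R(\Lambda)$ and the induction hypothesis on $R$ simply does not apply to them. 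Normalizing $\lambda_{i_0}=0$ does not disentangle the mixing, and ``a careful linear-algebra argument'' is asserted but not supplied --- in fact, nothing in your sketch rules out that the mixed system admits a degree-$(k-1)$ solution $R$ with too many vanishing conditions. The paper avoids this dead end entirely by proving a stronger statement (Lemma~\ref{lem:lekoneLinIndep}: for an \lekone\ pattern and generic $\Lambda$, the functionals $P\mapsto P^{(s)}(\lambda_i)$ are linearly independent) by induction on $m$ rather than on $k$. Its inductive step continuously merges $\lambda_m$ into $\lambda_1$ via Proposition~\ref{prop:opencondition}, and the key technical input is the Gessel--Viennot binomial-determinant theorem (Theorem~\ref{thm:binomlinindep}, applied in Lemma~\ref{lem:derivativelincomb}), which produces linear combinations of $P^{(s)}(\lambda_1)$ and $P^{(s)}(\lambda_1+\eps)$ converging to higher derivatives at $\lambda_1$ as $\eps\to 0$. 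Your proposal contains no analogue of that determinant input, and it is not clear the $k$-induction can be made to work without something like it.

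For sufficiency, the ``saturation'' argument is circular and also does not conclude. You need $L_n \notin W^\perp$ and $\phi_{i',j'} \notin W^\perp$ for each $(i',j') \notin S$, but if $\phi_{i',j'} \in W^\perp$ the iteration may terminate at a larger \emph{still-safe} set $S^* \supsetneq S$ without ever violating the column constraint, in which case you have realized $M(S^*)$ rather than $M$ and derived no contradiction. Moreover, invoking the necessity direction inside the iteration requires that $W$ contain a degree-$n$ polynomial, which is precisely the $L_n \notin W^\perp$ claim you are in the middle of proving. Both non-inclusions are exactly instances of the paper's key lemma, since adjoining any single index $(i',j')$ or the index $(1,n)$ to a \lek\ set produces an \lekone\ set; so any completion of your argument has to prove Lemma~\ref{lem:lekoneLinIndep} or an equivalent linear-independence statement. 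The paper sidesteps the saturation issue by first prepending columns to make $M$ saturated (exactly $n+c$ ones), applying Lemma~\ref{lem:lekoneLinIndep} once to produce a polynomial of exact degree $n+c$, and applying it a second time to show that the resulting zero pattern is not strictly larger than prescribed.
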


Using this characterization, we are able to enumerate $\D_n^{\gen(m)}$.

\begin{theorem}\label{thm:enumeration}
	The number of elements of $\D_n^{\gen(m)}$ with $k$ ones is \[\frac{n+1-k}{n+1}\binom{(n+1)m}{k},\] and
	\[\binom{(n+1)m-1}{n}-(m-2)\sum_{k=0}^{n-1} \binom{(n+1)m-1}{k}\]
	is the size of $\D_n^{\gen(m)}$.
\end{theorem}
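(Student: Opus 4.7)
The plan is to reduce the characterization from Theorem \ref{thm:genclass} to a ballot-type constraint on the column sums, apply the cycle lemma to count matrices with a fixed number of ones, and then sum over $k$ using routine binomial identities to get the total.

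First, reversing the column order, let $T_j$ denote the number of ones in column $n-j$ for $j \in [0,n]$; the condition of Theorem \ref{thm:genclass} rephrases as $T_0 + T_1 + \cdots + T_k \leq k$ for every $k \in [0,n]$. Setting $y_i = 1 - T_i$, each $y_i \leq 1$, the total is $y_0 + \cdots + y_n = (n+1) - k$ (where $k$ is the number of ones in the matrix), and the constraint is equivalent to all prefix sums $y_0 + \cdots + y_j$ being strictly positive. I would then invoke the classical cycle lemma: for any integer sequence $a_0, \ldots, a_n$ with $a_i \leq 1$ and $\sum a_i = s > 0$, exactly $s$ of its $n+1$ cyclic shifts have all prefix sums positive.

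To pass from the sequence-level count to matrices, I would double-count pairs $(M, t)$ where $M$ is an $m \times (n+1)$ $\{0,1\}$-matrix with exactly $k$ ones and $t \in [0,n]$. Letting $\sigma_t(M)$ denote the matrix obtained by cyclically shifting the columns of $M$ by $t$, the map $(M, t) \mapsto (\sigma_t(M), t)$ is a bijection on such pairs, so the number of pairs with $\sigma_t(M) \in \D_n^{\gen(m)}$ equals $(n+1) N_k$, where $N_k$ is the count we want. For each fixed $M$, the column sums of $\sigma_t(M)$ are the $t$-shift of $(T_0, \ldots, T_n)$, so the cycle lemma gives exactly $n+1-k$ good values of $t$; summing over all $M$, this count also equals $(n+1-k)\binom{(n+1)m}{k}$. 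Equating the two expressions yields $N_k = \tfrac{n+1-k}{n+1}\binom{(n+1)m}{k}$, which is the first claim.

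For the size formula, I would sum over $k \in [0,n]$ and split $\tfrac{n+1-k}{n+1} = 1 - \tfrac{k}{n+1}$ to get
\[
|\D_n^{\gen(m)}| = \sum_{k=0}^n \binom{(n+1)m}{k} - \sum_{k=0}^n \tfrac{k}{n+1}\binom{(n+1)m}{k}.
\]
Applying $\tfrac{k}{n+1}\binom{(n+1)m}{k} = m\binom{(n+1)m-1}{k-1}$ to the second sum (after reindexing) and Pascal's rule $\binom{(n+1)m}{k} = \binom{(n+1)m-1}{k} + \binom{(n+1)m-1}{k-1}$ to the first, the terms should collect into $\binom{(n+1)m-1}{n} - (m-2)\sum_{k=0}^{n-1}\binom{(n+1)m-1}{k}$. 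I expect the delicate step to be the cycle-lemma argument: a naive averaging over cyclic orbits could falter on matrices whose column-sum sequence has period smaller than $n+1$, but the double-counting above treats all $n+1$ labeled shifts uniformly and sidesteps this issue; the closing binomial manipulation is then routine.
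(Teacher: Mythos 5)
Your proposal is correct, and it lands on the same ratio $\frac{n+1-k}{n+1}$ by a route that is recognizably a cycle-lemma argument but applies it at a different level than the paper does. The paper flattens the matrix into a $\{0,1\}$-string of length $m(n+1)$ by reading columns from right to left, observes that the $\lek$ condition is exactly the $(m-1)$-dominating condition for that string, and invokes the Dvoretzky--Motzkin cycle lemma for binary strings with domination parameter $t=m-1$ to get $\frac{m(n+1-k)}{m(n+1)}\binom{(n+1)m}{k}$ dominating strings. You instead work directly with the column-sum sequence of length $n+1$, rewrite the constraint as positivity of the prefix sums of $y_j = 1 - T_j$, and invoke the integer-sequence form of the cycle lemma (entries bounded above by $1$, sum $s$, exactly $s$ of the $n+1$ rotations have positive prefix sums), then make the orbit-averaging explicit via the double count over pairs $(M,t)$. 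Your version is slightly shorter because the sequence has length $n+1$ rather than $m(n+1)$ and you skip verifying the matrix-to-string bijection; the cost is that you need the more general form of the cycle lemma for integer sequences rather than the $\{0,1\}$-string form. One minor remark on hygiene: the claim ``exactly $s$ of the $n{+}1$ rotations work'' is stated only for $s>0$, i.e.\ $k\leq n$; since $\D_n^{\gen(m)}$ has no matrices with more than $n$ ones, the formula is understood to apply for $k\leq n$ (and for $k=n+1$ it correctly gives $0$), which is consistent with your summation range. The closing manipulation with Pascal's rule and the absorption identity $\frac{k}{n+1}\binom{(n+1)m}{k}=m\binom{(n+1)m-1}{k-1}$ is correct and collapses to the stated formula exactly as in the paper.
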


When $m=1$, this sum simplifies to $2^n$. When $m=2$, it simplifies to $\binom{2n+1}{n}$, matching a result of Alon, Kravitz, and O'Bryant \cite[Corollary 2]{dope}.

\

In Sections 3 and 4, we provide bounds on the size of $\D_n^m$, giving a partial answer to a question posed in \cite[Problem 16]{dope}.

\

In Section 3, we focus on the $m \leq \frac{n^2+n}{2}$ case. Alon, Kravitz, and O'Bryant \cite[Theorem 4]{dope} find an upper bound of $\binom{mn^2}{m+n}$ by applying a theorem of R\'{o}nyai, Babai, and Ganapathy \cite[Theorem 1.1]{zeroPatterns} regarding the number of zero-patterns of general sequences of polynomials. We improve this bound by directly applying the methods of \cite{zeroPatterns} to the question at hand. We also find a lower bound on $\abs{\D_n^m}$. For $m \leq \frac{n^2+n}{2}$, we have the following:

\begin{theorem}\label{thm:normalmbounds}
	For $m(t),n(t)\fromto{\ZZ_{>0}}{\ZZ_{>0}}$ satisfying $n(t) \to \infty$ and $1<m(t) \leq \frac{n(t)^2+n(t)}{2}$, we have \[(1+o(1))\log \paren{n^m\binom{mn}{n}} \leq \log \abs{\D_n^m} \leq (1+o(1))\log \paren{n^{2m}\binom{mn}{n}}.\]\label{thm:asymptotics}
\end{theorem}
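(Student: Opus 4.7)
The plan is to adapt the R\'{o}nyai--Babai--Ganapathy linear-algebra argument to our specific polynomial system rather than invoking it as a black box. Regard $(a_0,\ldots,a_n,\lambda_1,\ldots,\lambda_m)$ as $m+n+1$ indeterminates, and let $f_{ij}=\sum_{k\ge j}\binom{k}{j}j!\,a_k\lambda_i^{k-j}$ be the polynomial computing $P^{(j)}(\lambda_i)$. For each realized $D\in\D_n^m$, pick a representative $(a^D,\lambda^D)$ and set $g_D=\prod_{(i,j):D_{ij}=0}f_{ij}$. A direct check gives $g_D(a^{D'},\lambda^{D'})\ne 0 \iff D'\le D$, so the usual triangularity argument, after ordering realized $D$'s by a linear extension of $\le$, yields linear independence of $\{g_D\}$ in $\CC[a,\lambda]$ and hence $\abs{\D_n^m}\le\dim\mathrm{span}\{g_D\}$. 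To bound this dimension I would exploit three features ignored by a generic invocation of R\'{o}nyai--Babai--Ganapathy: $f_{ij}$ is linear in the coefficient variables $a$; $f_{ij}$ has $\lambda_i$-degree $n-j$ and no dependence on the other $\lambda_k$'s; and every dope matrix has at most $n-j$ ones in column $j$. Writing each $g_D$ as a monomial in $a$ times a product of polynomials in the $\lambda$'s and performing a bigraded monomial count should match the target $n^{2m}\binom{mn}{n}$ up to a $(1+o(1))$ factor in the logarithm.

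\textbf{Lower bound.}
Theorems~\ref{thm:genclass} and \ref{thm:enumeration} already supply $\abs{\D_n^{\gen(m)}}=\Theta(\binom{mn}{n})$ up to polynomial-in-$n$ factors, so to finish it suffices to manufacture $\sim n^m$ additional dope matrices per generic ``base'' matrix. For each $D_0\in\D_n^{\gen(m)}$ with relatively few ones (say $\abs{D_0}\le n$), I would construct augmentations as follows: independently for every $i\in[m]$, select a column $j_i\in[0,n-1]$ and impose the single extra coincidence $P^{(j_i)}(\lambda_i)=0$ on top of whatever conditions realize $D_0$. This imposes $\abs{D_0}+m$ polynomial conditions on a parameter space of dimension $m+n+1$, which for the stated range of $m$ remains generically realizable and produces a dope matrix outside $\D_n^{\gen(m)}$ that records both $D_0$ and the augmentation pattern $(j_1,\ldots,j_m)$. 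A pigeonhole step (the same augmented matrix admits at most $\mathrm{poly}(n)^m$ decompositions into $(D_0,j_1,\ldots,j_m)$) costs only a $(1+o(1))$ factor in the logarithm and yields the claimed count.

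\textbf{Main obstacle.}
The upper bound is the harder direction. A direct application of R\'{o}nyai--Babai--Ganapathy recovers only the $\binom{mn^2}{m+n}$ bound of \cite{dope}, so improving to $n^{2m}\binom{mn}{n}$ demands a careful bigraded argument in which the degrees in the coefficient variables $a$ and in the $\lambda_i$ are tracked separately, and in which monomials forced to vanish by the column-sum constraints are pruned from the spanning set. Making this refined count match the target constants in front of $m\log n$ and $n\log m$ is where I expect the technical difficulty to lie; the lower-bound construction, by contrast, is essentially just parameter counting once one has the generic classification of Theorem~\ref{thm:genclass} in hand.
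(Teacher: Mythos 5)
Your upper-bound plan is the same in spirit as the paper's: refine the R\'onyai--Babai--Ganapathy spanning-set argument by exploiting that $f_{ij}$ is affine-linear in the $a$'s and of $\lambda_i$-degree $n-j$. But as sketched it omits the ingredient that actually yields $\binom{mn}{n}$ rather than $\binom{(m+1)n}{n}$: the paper splits zero-patterns into \emph{small} (at most $n$ ones, counted directly by $\le (n+1)\binom{mn}{n}$) and \emph{large} (more than $n$ ones), and only for large patterns does $g_D$ have at most $mn-n$ linear factors, so $\deg_a g_D\le mn-n$ and the $a$-monomials span a space of dimension $\binom{mn}{n}$. Without the split the worst-case $a$-degree is $mn$ (when $D$ has few ones), giving $\binom{(m+1)n}{n}$; for bounded $m$ (e.g.\ $m=2$) this loses a constant factor in $\log$ and does \emph{not} reach $(1+o(1))\log\bigl(n^{2m}\binom{mn}{n}\bigr)$. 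The column constraint you invoke governs only the $\lambda_i$-degree (yielding the $n^{2m}$); it gives no upper bound on $\deg_a g_D$.

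The lower bound is where you genuinely diverge from the paper, and it has a real gap. You augment a generic $D_0\in\D_n^{\gen(m)}$ by imposing one extra coincidence $P^{(j_i)}(\lambda_i)=0$ per row and argue "generic realizability" by counting $\abs{D_0}+m$ conditions against $m+n+1$ parameters. But many such augmented matrices are simply not dope: e.g.\ if several $j_i$ pile up in high columns the augmented matrix fails the safe condition (three ones in column $n-1$ already forces $a_n=a_{n-1}=0$ for \emph{any} distinct $\Lambda$), so the relevant variety is empty and no dimension count helps; and even when it is nonempty you must still exclude extraneous zeros. The paper sidesteps this with a structural lemma (Proposition~\ref{prop:limited}): for a $T$-limited saturated safe matrix realized over an algebraically independent $\Lambda$, there is a realizing $P$ such that \emph{every} $\lambda\in\CC$ satisfies at most $T$ of the coincidences $P^{(j)}(\lambda)=0$. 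This lets one append fresh rows one at a time with an explicit count of achievable new rows (Lemma~\ref{lem:grossbound}), and the three regimes of $m$ are handled by tuning $a$ and $T$ (Proposition~\ref{prop:limitlowerbound}). To make your augmentation idea rigorous you would need an analogue of that limitation lemma to control which augmentations are realizable and without unwanted extra zeros; as written, "remains generically realizable" is unjustified and in many cases false.
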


Standard asymptotic notation, such as the $o(1)$ in the above theorem, is used throughout this paper and is explained in Section \ref{sec:prelims}. We also freely make statements regarding the asymptotic behavior of $\binom{mn}{n}$, which will follow from the asymptotic estimates of $\log \binom{mn}{n}$ in Section \ref{sec:prelims}.

\

The lower bound in Theorem \ref{thm:normalmbounds} comes from constructing a large set of elements of $\D_n^m$. One possible construction is to consider only the case when $\Lambda$ is generic, which gives a lower bound of $\log \abs{\D_n^{\gen(m)}}$.

\

Another construction comes from taking a ``generic" polynomial $P$. Consider a polynomial $P$ such that no two derivatives of $P$ have a common root and no derivative of $P$ has a root of multiplicity more than $1$. If we construct an element of $\D_n^m$ corresponding to the polynomial $P$ one row at a time, we have around $n$ choices for each row. Hence, this construction gives a lower bound of around $\log (n^m)$.

\

If $m=o(n)$, then only the $\log \binom{mn}{n}$ term of the lower and upper bounds matter asymptotically. In this regime, the bounds essentially match (see Theorem \ref{thm:correct} for a more precise statement) and $\D_n^{\gen(m)}$ accounts for the lower bound. When $m = \omega(n)$, only the $\log (n^m)$ term in the lower bound matters, and hence the generic $P$ construction accounts for the lower bound. The construction for the $m=\Theta(n)$ case is a combination of the generic $\Lambda$ construction and the generic $P$ construction.

\

In Section 4, we focus on the $m > \frac{n^2+n}{2}$ regime. As $P^{(j)}$ has at most $n-j$ roots for each $j$, any dope matrix can have at most $\frac{n^2+n}{2}$ nonzero rows. Hence, the growth rate of $\D_n^m$ in $m$ decreases after $m$ passes the threshold of $\frac{n^2+n}{2}$. Beyond this threshold, we have the following results:

\begin{theorem}\label{thm:largeasymptotics}
	For $m(t),n(t)\fromto{\ZZ_{>0}}{\ZZ_{>0}}$ satisfying $m(t),n(t) \to \infty$, we have
	\begin{enumerate}
		\item[(a)] If $\frac{n^2+n}{2}<m = \Theta(n^2)$, then $\log \abs{\D_n^m} \sim \log \abs{\D_n^{\frac{n^2+n}{2}}}$;
		\item[(b)] If $\log m = \omega(\log n)$, then $\log \abs{\D_n^m} \sim \log \paren{\dbinom{m}{\frac{n^2+n}{2}}}$.
	\end{enumerate}

	When $n$ is fixed, $\abs{\D_n^m}$ is a polynomial in $m$, and we can compute the two leading terms:
	\[\abs{\D_n^m} = \frac{\paren{\frac{n^2+n}{2}}!}{1!2!\cdots n!}\binom{m}{\frac{n^2+n}{2}}+\frac{\paren{\frac{n^2+n}{2}}!}{1!2!\cdots n!}\paren{1+\frac{(n-1)(n-2)}{4}}\binom{m}{\frac{n^2+n}{2}-1}+O\paren{m^{\frac{n^2+n}{2}-2}}.\]
\end{theorem}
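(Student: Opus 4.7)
The plan is to prove all three parts through a bijective decomposition of $\abs{\D_n^m}$ as a polynomial in $m$. Setting $N := \tfrac{n(n+1)}{2}$ and letting $b_k$ count $k \times (n+1)$ dope matrices with no zero rows, we obtain
\[\abs{\D_n^m} = \sum_{k=0}^{N} b_k \binom{m}{k}\]
by recording the set of nonzero rows of $D \in \D_n^m$ along with the induced submatrix (the inverse extends by fresh $\lambda_i$'s distinct from each other and from the at most $N$ roots of $P, P', \ldots, P^{(n-1)}$). The cap $k \leq N$ comes from column $j$ of any dope matrix having at most $n-j$ ones. For parts (a) and (b), I would uniformly bound $b_k$ by the number of $k \times (n+1)$ zero-one matrices with total sum at most $N$, namely at most $(N+1)\binom{k(n+1)}{N}$, yielding $\log b_k \leq \tfrac{n^2 \log n}{2}(1 + o(1))$ for $k \leq N$---matching the leading order of $\log b_N$ by Stirling. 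Combined with the trivial lower bound $\abs{\D_n^m} \geq b_N \binom{m}{N}$, this gives $\log \abs{\D_n^m} \sim \log b_N \sim \log \abs{\D_n^N}$ in part~(a) (where $m = \Theta(n^2)$ forces $\log \binom{m}{k} = O(n^2) = o(\log b_N)$) and $\log \abs{\D_n^m} \sim \log \binom{m}{N}$ in part~(b) (where $\log m = \omega(\log n)$ forces $\log b_k = o(N \log m) = o(\log \binom{m}{N})$).

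For part (c), the decomposition exhibits $\abs{\D_n^m}$ as a polynomial in $m$ of degree $N$ whose top two coefficients are $b_N$ and $b_{N-1}$. Computing $b_N$ is straightforward: every such matrix saturates each column sum at $n-j$ with exactly one 1 per row, giving the multinomial $\tfrac{N!}{1!2!\cdots n!}$ patterns, and each is realized by a generic degree-$n$ polynomial $P$ whose derivatives $P^{(j)}$ have $n-j$ distinct roots pairwise disjoint across $j$, assigning each row's 1 to a designated root.

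Computing $b_{N-1}$ is the main obstacle. Matrices with $N-1$ nonzero rows have $N-1$ or $N$ total ones. Case A (total $N-1$, each row with one 1 and one column short by 1) contributes $\tfrac{N!}{1!2!\cdots n!}$ patterns by a direct multinomial count, each realizable by the above generic construction. Case B (total $N$, one row with two 1's in columns $j_1 < j_2$) has all column sums saturating, forcing $j_2 \leq n-1$. The pair $(j_1, j_1 + 1)$ is impossible because $P^{(j_1)}(\mu) = P^{(j_1 + 1)}(\mu) = 0$ forces $\mu$ to be a multiplicity-$\geq 2$ root of $P^{(j_1)}$, contradicting its required $n - j_1$ distinct roots. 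For $j_2 \geq j_1 + 2$, the hardest step is to realize every such pattern: choose $\mu \in \CC$, then pick $Q = P^{(j_1)}$ of degree $n-j_1$ generically on the codimension-$2$ subspace $\set{Q(\mu) = Q^{(j_2 - j_1)}(\mu) = 0}$ so that $Q, Q', \ldots, Q^{(n-j_1-1)}$ each have their full complement of distinct roots pairwise disjoint across derivatives (apart from the shared $\mu$ between $Q$ and $Q^{(j_2 - j_1)}$), and finally integrate $Q$ with generic constants so that $P, \ldots, P^{(j_1 - 1)}$ also attain the maximum number of distinct roots avoiding $\mu$. Counting patterns gives $\tfrac{(N-1)!}{1!2!\cdots n!}(n - j_1)(n - j_2)$ per valid pair, and the identity
\[\sum_{\substack{1 \leq b < a \leq n \\ a - b \geq 2}} ab = \tfrac{n(n+1)(n-1)(n-2)}{8}\]
(via standard power-sum manipulations) yields the Case B total $\tfrac{N!}{1!2!\cdots n!} \cdot \tfrac{(n-1)(n-2)}{4}$, which combined with Case A matches the theorem's stated coefficient of $\binom{m}{N-1}$.
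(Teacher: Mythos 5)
Your overall strategy matches the paper's: decompose $\abs{\D_n^m} = \sum_{k=0}^{N} b_k\binom{m}{k}$ (the paper's $V(k,n)$ is your $b_k$), extract the asymptotics from this polynomial, and compute the top two coefficients $b_N$ and $b_{N-1}$. The counting for $b_N$ and $b_{N-1}$ is correct and agrees with the paper, including the power-sum identity. Two remarks. First, a minor technical slip: the stated uniform bound $b_k \leq (N+1)\binom{k(n+1)}{N}$ fails when $k(n+1) < N$ (the binomial vanishes) and more generally when $N$ exceeds $k(n+1)/2$; you should split into $k \leq n$ (use $b_k \leq 2^{k(n+1)} \leq 2^{2N}$, whose $\log$ is $O(n^2)$) and $k > n$ (where the binomial term is monotone and your bound holds). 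The conclusion $\log b_k \leq (1+o(1))\tfrac{n^2\log n}{2}$ is still correct. The paper avoids this altogether with the clean inequality $\binom{m}{N}\binom{N}{k}\geq\binom{m}{k}$, yielding $\abs{\D_n^m} \leq \binom{m}{N}\abs{\D_n^N}$, and then invokes the small-$m$ upper bound at $m=N$.

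The substantive gap is in realizability. You assert, without proof, that a generic degree-$n$ polynomial has all derivatives with simple, pairwise-disjoint roots (for $b_N$ and Case A), and that a generic polynomial chosen on the codimension-$2$ subspace $\{Q(\mu)=Q^{(j_2-j_1)}(\mu)=0\}$ has this property except for the one prescribed coincidence at $\mu$ (for Case B). Neither of these is immediate, and the second is exactly the content of the paper's Proposition \ref{prop:strongerlimited} (strongerlimited), which is proved by a careful argument: pad $M$ with rows $(1,0,\ldots,0)$ to saturation, extend $\Lambda$, and then for an arbitrary $\lambda$ show that \emph{two} distinct indices $b$ give an affinely algebraically independent tuple $\Lambda'_b$ after substituting $\lambda$ for $\lambda_b$ --- this uses a resultant argument to show that the minimal affine algebraic dependence involves at least three variables, and lets one avoid the single row with more than $T$ ones. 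A Zariski-openness argument \emph{would} let you reduce "generic" to "there exists one example," but exhibiting that one example is essentially what Propositions \ref{prop:limited} and \ref{prop:strongerlimited} do, so that step cannot be elided. As written, your Case B construction hides a lemma that needs its own proof.
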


\subsection{Preliminaries and Notation}\label{sec:prelims}

We now check that $\D_n^{\gen(m)}$ is well-defined.

\begin{lemma}\label{lem:shift}
	If an $m$-tuple $\Lambda=(\lambda_1,\ldots,\lambda_m)$ of distinct complex numbers is affinely algebraically independent, then there exist complex numbers $a,b$ such that $a+b\Lambda$ is algebraically independent.
\end{lemma}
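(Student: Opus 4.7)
The plan is a standard countable-union / generic-point argument on $\mathbb{C}^2$. Enumerate the nonzero rational-coefficient polynomials in $m$ variables as $P_1, P_2, \ldots$ (the ring $\QQ[x_1,\ldots,x_m]$ is countable), and for each index $i$ set
\[
\tilde{P}_i(a,b) \vcentcolon= P_i(a+b\lambda_1,\ldots,a+b\lambda_m) \in \CC[a,b].
\]
The affine algebraic independence hypothesis is exactly the statement that each $\tilde{P}_i$ is a nonzero element of $\CC[a,b]$: the clause ``$P_i(a+b\lambda_1,\ldots,a+b\lambda_m)=0$ for all $a,b\in\CC$'' in the definition of affine algebraic dependence is precisely the vanishing of $\tilde{P}_i$ as a polynomial in $a$ and $b$.

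Each zero set $V_i \vcentcolon= \set{(a,b) \in \CC^2 : \tilde{P}_i(a,b) = 0}$ is therefore a proper algebraic subvariety of $\CC^2$, and in particular has Lebesgue measure zero under the identification $\CC^2 \cong \mathbb{R}^4$. The countable union $\bigcup_i V_i$, together with the measure-zero line $\set{b=0}$, still has measure zero in $\CC^2$. Hence there exists $(a,b) \in \CC^2$ lying outside all these sets; for such $(a,b)$ we have $b \neq 0$ and $\tilde{P}_i(a,b) \neq 0$ for every $i$, so no nonzero polynomial in $\QQ[x_1,\ldots,x_m]$ vanishes on $(a+b\lambda_1,\ldots,a+b\lambda_m)$. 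By definition, $a+b\Lambda$ is algebraically independent, as desired.

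The proof is short, and there is no substantive obstacle: the only thing worth stating carefully is the identification between ``$\tilde{P}_i$ vanishing identically as a polynomial in $a,b$'' and the ``for all $a,b$'' quantifier in the definition of affine algebraic dependence. Once this is in hand, the measure-zero step is routine, and one could equivalently phrase it using the Baire category theorem (each $V_i$ is closed and nowhere dense in $\CC^2$).
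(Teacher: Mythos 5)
Your proof is correct and takes essentially the same route as the paper: enumerate the countably many nonzero $P \in \QQ[x_1,\ldots,x_m]$, observe that affine algebraic independence says exactly that each $\tilde{P}(a,b) = P(a+b\lambda_1,\ldots,a+b\lambda_m)$ is a nonzero element of $\CC[a,b]$, and then find a single $(a,b)$ avoiding all of their zero sets simultaneously. The only difference is in the last step: the paper avoids measure theory entirely, instead picking $a_0$ so that some leading coefficient (as a polynomial in $b$) is nonzero for every $Q$ (possible since the bad $a$'s form a countable union of finite sets), and then picking $b_0$ avoiding the now-finitely-many roots of each $Q(a_0,b)$; this uses nothing beyond ``a countable union of countable sets is countable while $\CC$ is not.'' Your Lebesgue-measure (or Baire category) phrasing is a perfectly valid substitute and is arguably slightly slicker, at the cost of invoking heavier machinery than the problem requires. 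Your additional care in excluding the line $\{b=0\}$ is a nice touch, though it is technically automatic: if $b=0$ then $a+b\Lambda$ has repeated entries (for $m\geq 2$) and hence cannot be algebraically independent.
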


\begin{proof}
	We have that for all polynomials $P \in \QQ[x_1,\ldots,x_m] \setminus \{0\}$, the polynomial $P(a+b\Lambda) \in \CC[a,b]$ is not identically zero. We want to show that there exist $a,b \in \CC$ such that $P(a+b \Lambda) \neq 0$ for all $P \in \QQ[x_1,\ldots,x_n] \setminus \{0\}$. We'll show, more generally, that if $S$ is any countable subset of $\CC[a,b] \setminus \{0\}$, then there exist $a,b$ such that $Q(a,b) \neq 0$ for all $Q \in S$.

	\

	View each $Q \in S$ as a polynomial in $b$ with coefficients being polynomials in $a$. For some $k$, the coefficient of $b^k$ in $Q$ is some nonzero polynomial $R$ in $a$. For any $a$ with $R(a) \neq 0$, we have that $Q(a,b)$ has finitely many roots as a polynomial in $b$. For each $Q \in S$, pick some such polynomial $R$, and let $A_Q$ be the set of roots of $R$. Since each $A_Q$ is finite, we have that $\bigcup_{Q \in S} A_Q$ is countable, and hence, we can pick some $a_0\in\CC$ that isn't in $A_Q$ for any $Q \in S$.
	
	\
	
	We claim that for some $b_0 \in \CC$, we have $Q(a_0,b_0) \neq 0$ for all $Q \in S$. Let $B_Q$ be the set of roots of $Q(a_0,b)$ viewed as a polynomial in $b$. By the definition of $a_0$, we have that each $B_Q$ is finite, so $\bigcup_{Q \in S}B_Q$ is countable. Hence, there is some $b_0 \in \CC$ that isn't in $B_Q$ for any $Q \in S$, as desired.
\end{proof}

We'll also recall a fact from \cite{multiplicity} allowing us to equate $D_n(\Lambda)$ and $\D_n(\Lambda')$ when we have linear maps or $\QQ$-automorphisms of $\CC$ sending $\Lambda$ to $\Lambda'$.

\begin{lemma}\cite[Theorem 5]{multiplicity}
	For any $\QQ$-automorphism $\varphi$ and complex numbers $a,b$ with $b \neq 0$, we have $\D_n(\Lambda)=\D_n(a+b\varphi(\Lambda))$.
\end{lemma}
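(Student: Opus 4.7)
The plan is to decompose the transformation $\Lambda \mapsto a + b\varphi(\Lambda)$ into two independent steps, the affine map $\lambda \mapsto a + b\lambda$ and the coordinatewise action of the $\QQ$-automorphism $\varphi$, and verify the equality $\D_n(\Lambda) = \D_n(\Lambda')$ separately in each step before composing them.

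For the affine step, given any $P \in \CC[x]$ of degree $n$, I would set $Q(x) \vcentcolon= P\!\left(\frac{x-a}{b}\right)$. By the chain rule,
\[Q^{(j)}(x) = \frac{1}{b^j}\, P^{(j)}\!\left(\frac{x-a}{b}\right),\]
so $Q^{(j)}(a+b\lambda_i) = b^{-j}\, P^{(j)}(\lambda_i)$, which vanishes exactly when $P^{(j)}(\lambda_i)$ does; this gives $D_Q(a+b\Lambda) = D_P(\Lambda)$. Since $b \neq 0$, the substitution $x \mapsto (x-a)/b$ is a degree-preserving bijection on $\CC[x]$, so every degree-$n$ polynomial arises as such a $Q$, and both inclusions follow.

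For the $\QQ$-automorphism step, given $P = \sum_k c_k x^k$ of degree $n$, I would let $\varphi(P) \vcentcolon= \sum_k \varphi(c_k) x^k$. Because $\varphi$ fixes $\QQ$ (and in particular the integers produced by differentiation), $\varphi(P)^{(j)} = \varphi(P^{(j)})$, and because $\varphi$ is a ring homomorphism, $\varphi(P)^{(j)}(\varphi(\lambda_i)) = \varphi\!\left(P^{(j)}(\lambda_i)\right)$. As $\varphi$ is a bijection of $\CC$ fixing $0$, this vanishes exactly when $P^{(j)}(\lambda_i) = 0$; and as $\varphi$ sends nonzero elements to nonzero elements, $\deg \varphi(P) = n$. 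Thus $D_{\varphi(P)}(\varphi(\Lambda)) = D_P(\Lambda)$, giving $\D_n(\Lambda) \subseteq \D_n(\varphi(\Lambda))$, and the reverse inclusion follows on replacing $\varphi$ by $\varphi^{-1}$. Combining with the affine step proves the lemma.

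The only real subtlety is checking cleanly that the coefficientwise action of $\varphi$ commutes with both differentiation and evaluation; this relies essentially on $\varphi$ being a ring endomorphism that fixes $\ZZ$, and absent the hypothesis that $\varphi$ fixes $\QQ$ the identity $\varphi(P)^{(j)} = \varphi(P^{(j)})$ could fail. Everything else reduces to bijectivity of the two operations and to the fact that a ring automorphism of $\CC$ carries nonzero elements to nonzero elements.
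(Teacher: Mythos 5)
Your proof is correct, and it is essentially the approach the paper sketches: the paper constructs the single polynomial $P_{a+b\varphi}(x) = \sum_k \varphi(a_k)\paren{\frac{x-a}{b}}^k$ satisfying $D_P(\Lambda) = D_{P_{a+b\varphi}}(a+b\varphi(\Lambda))$, and your two-step factorization (affine substitution, then coefficientwise $\varphi$) composes to exactly this polynomial. The decomposition is a clean way to make the sketch explicit, but the underlying mechanism — a degree-preserving bijection on polynomials commuting with differentiation and evaluation — is the same.
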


The idea of the proof is, for any polynomial $P(x)=a_n x^n + a_{n-1}x^{n-1} + \cdots + a_0$, to consider the polynomial \[P_{a+b \varphi}(x) \vcentcolon= \varphi(a_n) \paren{\frac{x-a}{b}}^n+\varphi(a_{n-1})\paren{\frac{x-a}{b}}^{n-1}+\cdots+\varphi(a_0).\]
This polynomial has the property that $\D_P(\Lambda) = \D_{P_{a+b \varphi}}(a+b\Lambda)$ for all $P$. A more complete explanation can be found in \cite{multiplicity}.

\

We are now ready to check that $\D_n^{\gen(m)}$ is well-defined.

\begin{theorem}\label{thm:wellDefined}
	If $\Lambda_1$ and $\Lambda_2$ are affinely algebraically independent $m$-tuples of complex numbers, then $\D_n(\Lambda_1)=\D_n(\Lambda_2)$.
\end{theorem}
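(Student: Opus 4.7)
The plan is to reduce to the algebraically independent case via Lemma \ref{lem:shift}, and then transport one algebraically independent tuple onto the other using a $\QQ$-automorphism of $\CC$, so that the previous lemma (\cite[Theorem 5]{multiplicity}) finishes the job.

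First I would apply Lemma \ref{lem:shift} to each $\Lambda_i$ ($i=1,2$) to produce $a_i, b_i \in \CC$ such that $\Lambda_i' := a_i + b_i \Lambda_i$ is algebraically independent over $\QQ$. Note $b_i \neq 0$ automatically, since otherwise $\Lambda_i'$ would be the constant tuple $(a_i,\ldots,a_i)$ and hence not algebraically independent (for $m=1$, transcendence forces $b_i\neq 0$). Applying the cited lemma with $\varphi = \mathrm{id}$ gives
\[\D_n(\Lambda_i) = \D_n(\Lambda_i').\]
So it suffices to show that $\D_n(\Lambda_1') = \D_n(\Lambda_2')$ whenever $\Lambda_1', \Lambda_2'$ are both algebraically independent $m$-tuples.

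Next I would construct a $\QQ$-automorphism $\varphi$ of $\CC$ with $\varphi(\Lambda_1') = \Lambda_2'$. Since $\Lambda_1'$ and $\Lambda_2'$ are each algebraically independent over $\QQ$, the assignment $\lambda_{1,i}' \mapsto \lambda_{2,i}'$ extends to a $\QQ$-algebra isomorphism between the purely transcendental subfields $\QQ(\Lambda_1')$ and $\QQ(\Lambda_2')$ of $\CC$. Because $\CC$ has transcendence degree $2^{\aleph_0}$ over $\QQ$, both $\Lambda_1'$ and $\Lambda_2'$ can be enlarged to transcendence bases of the same cardinality; the bijection between these bases extends to a $\QQ$-isomorphism of the corresponding purely transcendental extensions, and then, since $\CC$ is an algebraic closure of each such subfield, the isomorphism lifts (by a standard Zorn's lemma / algebraic-closure argument) to a $\QQ$-automorphism $\varphi$ of $\CC$ sending $\Lambda_1'$ to $\Lambda_2'$ componentwise.

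Finally, applying \cite[Theorem 5]{multiplicity} with $(a,b,\varphi) = (0,1,\varphi)$ yields
\[\D_n(\Lambda_1') = \D_n(\varphi(\Lambda_1')) = \D_n(\Lambda_2'),\]
which together with the first step gives $\D_n(\Lambda_1) = \D_n(\Lambda_2)$. The only step that is more than a direct invocation of earlier machinery is the extension of the partial map $\Lambda_1' \mapsto \Lambda_2'$ to a full $\QQ$-automorphism of $\CC$; this is a classical field-theoretic fact, but it is the one place where one has to invoke something beyond what has already been set up in the paper.
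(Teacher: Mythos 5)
Your proof follows exactly the same route as the paper: shift each $\Lambda_i$ by Lemma \ref{lem:shift} to obtain algebraically independent tuples, then transport one to the other via a $\QQ$-automorphism of $\CC$ and invoke the cited invariance result. You simply spell out in more detail the field-theoretic existence of the automorphism, which the paper delegates to a reference (your parenthetical about $m=1$ is slightly off, since a single transcendental $a_i$ is already algebraically independent, but this doesn't affect the argument).
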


\begin{proof}
	Let $\Lambda$ be an algebraically independent $m$-tuple of complex numbers.
	By Lemma \ref{lem:shift}, there are constants $a_1,b_1,a_2,b_2 \in \CC$ such that $a_1+b_1\Lambda_1$ and $a_2+b_2\Lambda_2$ are algebraically independent. Now, there exists some $\QQ$-automorphism $\varphi$ of $\CC$ sending $a_1+b_1\Lambda_1$ to $a_2+b_2\Lambda_2$ (see the introduction of \cite{dope} for further discussion of this fact). Hence,
	\[\D_n(\Lambda_1) = \D_n(a_1+b_1 \Lambda_1) = \D_n(\varphi(a_1+b_1\Lambda_1)) = \D_n(a_2+b_n \Lambda_2) = \D_n(\Lambda_2),\]
	as desired.
\end{proof}

To state our results on asymptotic dope matrix counts, we will use standard asymptotic notation. For functions $f,g\fromto {\ZZ }{\mathbb R}$, we can compare the growth rates of $f$ and $g$ with the following:

\begin{enumerate}
	\item $f=O(g)$ if, for some constant $K$, we have $\abs{f} \leq K\abs{g}$ for all sufficiently large $t$;
	\item $f=\Theta(g)$ if, for some constants $K,K'$, we have $K\abs{g} \leq \abs{f} \leq K'\abs{g}$ for all sufficiently large $t$;
	\item $f \sim g$ if, as $t$ approaches $\infty$, we have that $\frac{f}{g}$ approaches $1$;
	\item $f=o(g)$ if, for any constant $\eps$, we have $\abs{f} \leq \eps\abs{g}$ for all sufficiently large $t$;
	\item $f=\omega(g)$ if, for any constant $M$, we have $\abs{f} \geq M\abs{g}$ for all sufficiently large $t$.
\end{enumerate}

We will frequently need to analyze the asymptotic behavior of $n!$ and $\log \binom{mn}{n}$. We will make use of the following inequalities:

\begin{enumerate}
	\item For any positive integer $n$, we have $n\log n - n \leq \log n! \leq n \log n$.
	\item For $m(t),n(t)\fromto{\ZZ_{>0}}{\ZZ_{>1}}$ satisfying $n(t) \to \infty$, we have:
		\[\log \binom{mn}{n} = n \log \paren{\frac{m^m}{(m-1)^{(m-1)}}}+O(\log n)=n\log m + O(n).\]
	In particular, $\log \binom{mn}{n}$ grows linearly when $m = O(1)$ and is $\omega(n)$ when $m = \omega(1)$.
\end{enumerate}

The inequality follows from
	\[n\log n - n \leq  \int_1^n \log x \mathrm{d} x \leq \log 2 + \log 3 + \cdots + \log n \leq n \log n,\]
since $\log n! = \log 2 + \log 3 + \cdots + \log n$. The first asymptotic estimate on $\log \binom{mn}{n}$ follows directly from Stirling's approximation, and the second follows from the fact that $2 \leq  \paren{\frac{m}{m-1}}^{m-1} < e$ for all $m >1$.
 
\section{Generic Dope Matrices}

In this section we will prove Theorem \ref{thm:genclass}. We first outline the proof of the $m=2$ case from \cite[Section 4]{dope}, as our proof uses many ideas from it. It makes use of the following result (slightly rephrased) of Gessel and Viennot.
\begin{theorem}\cite[Corollary 3]{binomdet}\label{thm:binomlinindep}
	Let $\mathcal{G},\mathcal{H} \subset \mathbb{Z}_{\geq 0}$ be finite sets. If $\abs{\mathcal{G} \cap [0,c]} \leq \abs{\mathcal{H} \cap [0,c]}$ for every $c \in \ZZ_{\geq 0}$, then the matrix of binomial coefficients
	\[\left[\binom{g}{h}\right]_{g \in \mathcal{G}, h \in \mathcal{H}}\]
	has rank $\abs{\mathcal{G}}$.
\end{theorem}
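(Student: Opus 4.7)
The plan is to exhibit a $k \times k$ submatrix of nonzero determinant, where $k = |\mathcal{G}|$. Writing $\mathcal{G} = \{g_1 < \cdots < g_k\}$ and $\mathcal{H} = \{h_1 < \cdots < h_\ell\}$, setting $c = g_i$ in the hypothesis immediately yields $h_i \leq g_i$ for each $i \in \{1, \ldots, k\}$. I would then focus on the submatrix $M = [\binom{g_i}{h_j}]_{1 \leq i, j \leq k}$ cut out by the $k$ smallest columns of $\mathcal{H}$ and show $\det M \geq 1$ via the Lindstr\"om--Gessel--Viennot lemma.

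Concretely, place sources $A_i = (-g_i, 0)$ and sinks $B_j = (-h_j, h_j)$ on the integer lattice, with monotone paths taking unit right and up steps. A path from $A_i$ to $B_j$ consists of $g_i - h_j$ right-steps and $h_j$ up-steps, so there are exactly $\binom{g_i}{h_j}$ such paths when $g_i \geq h_j$ (and none otherwise). LGV then gives
\[\det M = \sum_{\sigma \in S_k} \operatorname{sgn}(\sigma) \cdot \#\{\text{vertex-disjoint systems of paths } A_i \to B_{\sigma(i)}\}.\]

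The key step is to show that only the identity permutation contributes. The sources $A_1, \ldots, A_k$ sit along the $x$-axis in right-to-left order of increasing index, and the sinks $B_1, \ldots, B_k$ lie on the antidiagonal $x + y = 0$ in the same right-to-left order. For any non-identity $\sigma$, there is an inversion $i < j$ with $\sigma(i) > \sigma(j)$; the resulting pair of paths $A_i \to B_{\sigma(i)}$ (starting right, ending left) and $A_j \to B_{\sigma(j)}$ (starting left, ending right) must share a lattice vertex because each path's $x$-coordinate is non-decreasing. Hence only $\sigma = \mathrm{id}$ contributes, and $\det M$ equals the non-negative count of vertex-disjoint identity systems.

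To see this count is positive, I would exhibit one explicit family: for each $i$, let $P_i$ be the ``up-then-right'' path that first travels $h_i$ steps up from $A_i = (-g_i, 0)$ to $(-g_i, h_i)$ and then $g_i - h_i$ steps right to $B_i = (-h_i, h_i)$. Each $P_i$ exists precisely because $g_i \geq h_i$. Vertex-disjointness is automatic: the vertical segments lie on the distinct lines $x = -g_i$, the horizontal segments on the distinct lines $y = h_i$, and a quick case-check using the orderings $g_1 < \cdots < g_k$ and $h_1 < \cdots < h_k$ shows that no vertical of $P_i$ meets a horizontal of $P_j$ for $i \neq j$. Hence $\det M \geq 1$, so $M$ has rank $k$ and the original matrix has rank $|\mathcal{G}|$. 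The main obstacle is the planarity step forcing non-crossing systems to realize $\sigma = \mathrm{id}$; once that is in hand, the explicit ``up-then-right'' construction and the LGV bookkeeping finish the argument.
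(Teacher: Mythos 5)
Your proof is correct, and it takes essentially the same approach as the cited source: the paper gives no proof of this statement, deferring entirely to Gessel and Viennot's \cite[Corollary 3]{binomdet}, whose method is exactly the Lindstr\"om--Gessel--Viennot lattice-path argument you reconstruct here. In particular your reduction to $h_i \le g_i$ (taking $c = g_i$ in the dominance hypothesis, which also gives $|\mathcal{G}| \le |\mathcal{H}|$ so the square submatrix can be formed), the crossing argument forcing $\sigma = \mathrm{id}$, and the pairwise vertex-disjointness of the up-then-right family (a vertical of $P_i$ on $x=-g_i$ meets a horizontal of $P_j$ on $y=h_j$ only if $h_j \le h_i$ and $g_i \le g_j$ hold simultaneously, which the strict orderings forbid for $i \ne j$) all check out.
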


\begin{proof}[Outline of Proof of Theorem \ref{thm:genclass} when $m=2$]
	Take $\Lambda=(0,1)$, and let $P(x)=a_nx^n + a_{n-1}x^{n-1}+\cdots+a_0$. For a $2 \times (n+1)$ matrix $M$, we can view the equation $D_P(\Lambda)=M$ as a system of linear equations in $a_n,\ldots,a_0$ via
	\[P^{(s)}(0) = 0 \iff a_{s}= 0 \quad\text{and}\quad P^{(s)}(1)=0 \iff \sum_{i=0}^{n} \binom{n-i}{s}a_i=0.\]
	When $M$ does not satisfy the property in Theorem \ref{thm:genclass}, for some $k$, it has at least $k+1$ ones in the last $k+1$ columns. Take the minimum $k$ with the preceding property and look at the last $k+1$ columns. The resulting linear equations are linearly independent by Theorem \ref{thm:binomlinindep}.
	
	\
	
	In these linear equations, only the $k+1$ variables $a_n ,a_{n-1},\ldots,a_{n-k}$ have nonzero coefficients, so we must have $a_n=a_{n-1}=\cdots=a_{n-k}=0$. However, if $a_n=0$, then $P$ cannot be a degree-$n$ polynomial, which gives a contradiction.
	
	\
	
	To show that any matrix $M$ with at most $k$ ones in the last $k+1$ columns is attainable, we add all-one columns to the left end of the matrix until the number of ones is one smaller than the number of columns. Say we added $c$ columns, and let $M'$ be the matrix with the $c$ columns added. Take the resulting system of equations and append $a_{n+c}=1$, ensuring that our desired polynomial has degree exactly $n+c$.
	
	\
	
	By Theorem \ref{thm:binomlinindep}, the resulting equations are linearly independent, and hence have a unique solution. Letting $P_0$ be the polynomial corresponding to this solution, we have that $D_{P_0}(\Lambda)$ is not the zero matrix, since $P^{(n+c)}(0) \neq 0$. Hence, $D_{P_0}(\Lambda)$ must have at most $n+c$ ones, and is hence exactly $M'$. Now, $D_{P_0^{(c)}}(\Lambda)$ is $M$, as desired.
\end{proof}

We now introduce notation that will be helpful in our proof of Theorem \ref{thm:genclass}

\begin{definition}
	\normalfont Call a matrix \emph{\lek} if the last $k+1$ columns contain at most $k$ nonzero entries for all $k$. Similarly, call a matrix \emph{\lekone} if the last $k+1$ columns contain at most $k+1$ nonzero entries for all $k$.
\end{definition}

Throughout this section, we will let $P(x)=a_n x^n + a_{n-1}x^{n-1}+\cdots+a_{0}$ denote a general polynomial with $\deg P \leq n$. Note that the set of such $P$ forms a complex vector space of dimension $n+1$ with basis given by $1,x,\ldots,x^n$. Hence, for a fixed integer $s$ and complex number $t$, we can view $P^{(s)}(t)$ as a linear form in $P$:
\[(a_0,\ldots,a_n) \mapsto \sum_{j=0}^n {j(j-1)\cdots (j+1-s) t^{j-s}}a_j.\]

We will use $\Lambda=(\lambda_1,\ldots,\lambda_m)$ and $\mathbf{S}=(S_1,\ldots,S_m)$ to denote arbitrary $m$-tuples of complex numbers and $m$-tuples of subsets of $\{0,1,\ldots,n\}$, respectively. Denote by $P^{(\mathbf{S})}(\Lambda)$ the set of linear forms $P^{s}(\lambda_i)$ for $i \in [m]$ and $s \in S_i$, and denote by $M(\mathbf{S})$ the matrix $(c_{ij})_{i \in [m],\ j \in [0,n]}$, where $c_{ij}$ is $1$ if $j \in S_i$ and $0$ otherwise.

\subsection{Reduction to the Key Lemma}

The key lemma is the following linear independence property:

\begin{lemma}\label{lem:lekoneLinIndep}
	If $M(\mathbf{S})$ is \lekone\ and $\Lambda$ is generic, then $P^{\mathbf{S}}(\Lambda)$ is linearly independent.
\end{lemma}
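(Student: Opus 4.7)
The plan is to prove Lemma~\ref{lem:lekoneLinIndep} by induction on $m$, via a strengthened statement that allows restricting the matrix to an arbitrary column set $J \subseteq [0,n]$. Specifically, I would prove: if $\Lambda$ is generic, $S_1, \ldots, S_m \subseteq [0, n]$, and $J \subseteq [0, n]$ satisfies the Hall-type condition
\[\sum_{i=1}^m \abs{S_i \cap [c, n]} \leq \abs{J \cap [c, n]} \quad \text{for all } c \in [0, n+1],\]
then the matrix with rows indexed by $(i, s)$ with $s \in S_i$, columns by $j \in J$, and entries $\frac{j!}{(j-s)!} \lambda_i^{j-s}$ (zero if $j < s$) has rank $N := \sum_i \abs{S_i}$. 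The original lemma is the $J = [0, n]$ case, since the Hall-type condition then reduces to $M(\mathbf{S})$ being \lekone.

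For the base case $m = 1$, I would rescale rows and columns (by $\lambda_1^s/s!$ and $\lambda_1^{-j}$ respectively) to reduce, without changing the rank, to $\paren{\binom{j}{s}}_{s \in S_1,\, j \in J}$. Hall's marriage theorem, applied to the Hall-type condition on $S_1$ and $J$, yields an order-preserving injection $\phi \fromto{S_1}{J}$ with $\phi(s) \geq s$, and the square submatrix on columns $\phi(S_1)$ then satisfies the hypothesis of Theorem~\ref{thm:binomlinindep} (after transposition), so it has nonzero determinant.

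For the inductive step, I would change basis of $\CC[x]_{\leq n}$ from $\set{x^j}$ to $\set{(x - \lambda_m)^j}$. In the new basis, row $(m, s)$ for $s \in S_m$ has its only nonzero entry $s!$ at column $s$; using these rows as pivots, I would zero out every column in $S_m$ for the remaining rows, producing a block-diagonal matrix whose rank equals $\abs{S_m}$ plus the rank of a sub-matrix $B$ with rows $(i, s)$ for $i < m$, columns $j \in J \setminus S_m$, and entries $\frac{j!}{(j-s)!} \tilde{\lambda}_i^{j-s}$, where $\tilde{\lambda}_i \vcentcolon= \lambda_i - \lambda_m$. Since $\tilde{\lambda}_1, \ldots, \tilde{\lambda}_{m-1}$ remain algebraically independent (the transformation is invertible) and the Hall-type condition for the sub-problem, obtained by subtracting $\abs{S_m \cap [c, n]}$ from both sides of the original, still holds, the inductive hypothesis yields $\operatorname{rank} B = N - \abs{S_m}$, completing the induction.

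The main obstacle I anticipate is the bookkeeping around the substitution $y = x - \lambda_m$: one must verify that the block-reduction in the new basis genuinely produces a problem matching the strengthened claim, with translated parameters $\tilde{\lambda}_i$ acting on $\CC[y]_{\leq n}$ and with the restricted column set $J \setminus S_m$ corresponding to the monomial subspace $\set{y^j : j \in J \setminus S_m}$. Getting the strengthening right---so that arbitrary $J$ is carried through the induction rather than only $J = [0, n]$---is what makes the reduction go through at all, since the sub-matrix after peeling off $\lambda_m$ naturally has a nonstandard column support.
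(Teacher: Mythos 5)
Your base case is sound, and the bookkeeping of the Hall-type condition across the reduction (namely $\abs{(J\setminus S_m)\cap[c,n]}\ge\abs{J\cap[c,n]}-\abs{S_m\cap[c,n]}\ge\sum_{i<m}\abs{S_i\cap[c,n]}$) checks out. The gap is in the pivot step of the induction. Changing basis from $\{x^j\}$ to $\{(x-\lambda_m)^j\}$ is a column operation on the \emph{full} $(n+1)$-column matrix; it mixes columns inside and outside $J$, so it is not a rank-preserving operation on the column-restricted matrix $A_J$ unless $V_J=\operatorname{span}\{x^j:j\in J\}$ happens to equal $\operatorname{span}\{(x-\lambda_m)^j:j\in J\}$, which fails for general $J$. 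Equivalently: in the new coordinates the functional $P\mapsto P^{(s)}(\lambda_m)$ is the $s$-th coordinate functional, but that nice form is destroyed by restricting to $V_J$, which is not a coordinate subspace in the shifted basis. Your reduction needs $S_m\subseteq J$ so that the pivot rows keep a nonzero entry inside the visible columns, and that containment is \emph{not} preserved through the recursion because the $S_i$ need not be pairwise disjoint. A minimal instance: $n=2$, $m=3$, $S_1=S_2=S_3=\{0\}$, $J=[0,2]$. Peeling off $\lambda_3$ gives $J'=\{1,2\}$ and the $2\times 2$ matrix with rows $(\tilde\lambda_1,\tilde\lambda_1^2)$, $(\tilde\lambda_2,\tilde\lambda_2^2)$, which has rank $2$; but when the inner call centers at $\tilde\lambda_2$, the row $(2,0)$ becomes supported only on column $0\notin J'$, i.e.\ a zero row, so the pivot collapses and the inductive hypothesis cannot be invoked. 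The rank of the old-basis $J'$-restriction and the new-basis $J'$-restriction simply disagree here, which is exactly the non-commutativity of translation with column restriction.

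The paper sidesteps this by never shrinking the column set. Its inductive step (Lemma~\ref{lem:safeLinIndep}) combines rows $1$ and $m$ into a single new row $S_1'$ of a still-\lekone\ matrix, using Lemma~\ref{lem:derivativelincomb} to express the forms in $P^{\mathbf{S}'}(\Lambda')$ as $\eps\to 0$ limits of linear combinations of the forms in $P^{\mathbf{S}}(\Lambda)$ with $\lambda_m=\lambda_1+\eps$, and then Proposition~\ref{prop:opencondition} to transfer the resulting independence back to a nearby transcendental $\eps$ (hence to all generic $\Lambda$). The translation-and-pivot you propose is morally what Lemma~\ref{lem:derivativelincomb} accomplishes, but carried out as a degeneration argument on the full column set rather than as a basis change on a column-restricted matrix. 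To repair your proof you would either need a version of the strengthened claim that is stable under the shift $x\mapsto x-\lambda_m$ (which arbitrary $J$ is not), or replace the exact basis change with the perturbative limit, at which point you essentially recover the paper's argument.
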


Before proving this lemma, we will show how it implies Theorem \ref{thm:genclass}. The proof is similar to the proof of the $m=2$ case in \cite{dope}, with Lemma \ref{lem:lekoneLinIndep} replacing the result of Gessel-Viennot.

\begin{proof}[Proof of Theorem \ref{thm:genclass}]
First, we show that being \lek\ is necessary. Let $\Lambda$ be a generic $m$-tuple. Suppose for some $\mathbf{S}$ such that $M(\mathbf{S})$ is not \lek, there exists a polynomial $P$ such that $D_P(\Lambda)=M(\mathbf{S})$.

\

Take the smallest $k$ such that the last $k+1$ columns contain at least $k+1$ nonzero entries. The linear forms corresponding to these entries are linear functions of $a_n,a_{n-1},\ldots,a_{n-k}$ and are linearly independent by Lemma \ref{lem:lekoneLinIndep}. Hence, $a_n=0$, contradicting the assumption that $P$ is degree-$n$.

\

To show that the condition is sufficient, given any \lek\ matrix $M$, we prepend columns where the top two entries are $1$ and the remaining entries are $0$ such that the resulting matrix $M'$ has exactly $n+c$ nonzero entries, where $c$ is the number of prepended columns. We have that $M'$ is \lek.

\

Let $\mathbf{S}$ be such that $M(\mathbf{S})=M'$. By Lemma \ref{lem:lekoneLinIndep}, $P^{\mathbf{S}}(\Lambda)$ consists of $n+c$ independent linear forms. Appending the linear form $P^{(n+c)}(\lambda_1)$ corresponds to adding a one to the last column of $M'$, which makes $M'$ remain \lekone, so by Lemma \ref{lem:lekoneLinIndep}, $P^{\mathbf{S}}(\Lambda) \cup \{P^{(n+c)}(\lambda_1)\}$ is a set of $n+c+1$ independent linear forms. Hence, there is some nonzero polynomial $P_0$ with degree at most $n+c$ such that $P_0^{(n+c)}(\lambda_1)=1$, and for all $i \in [m]$ and $s  \in S_i$, we have $P_0^{(s)}(\lambda_i)=0$. Since $P_0^{(n+c)}(\lambda_1)=1$, we have that $P_0$ has degree exactly $n+c$.

\

We next show that we cannot have $P_0^{(s)}(\lambda_i)=0$ for any $i \in [m]$ and $s \not\in S_i$. Consider $\mathbf{S}' = (S_1,\ldots,S_{i-1},S_i \cup\{ s\}, S_{i+1},\ldots,S_m)$. If $P_0^{(s)}(\lambda_i)=0$, then $P_0$ is zero on every element of $P^{\mathbf{S}'}(\Lambda)$. However, $M(\mathbf{S})$ is \lekone, so $P^{\mathbf{S}'}(\Lambda)$ contains $n+c+1$ linearly independent linear functions by Lemma \ref{lem:lekoneLinIndep}. This forces $P_0$ to be the zero polynomial, which is a contradiction. Thus, $D_{P_0}(\Lambda)$ is exactly $M'$, and hence $D_{P_0^{(c)}}(\Lambda)$ is the desired matrix.
\end{proof}

\subsection{Demonstration of Proof Technique}

As the proof of Lemma \ref{lem:lekoneLinIndep} is fairly technical, we provide a demonstration of the proof for a small \lekone\ matrix.

\

Consider the $\mathbf{S}$ corresponding to the following $3 \times 6$ matrix, which is \lekone:
\[M(\mathbf{S})=\begin{bmatrix}1 & 1 & 0 & 0 & 1 & 0  \\ 0 & 0 & 0 & 1 & 0 & 0 \\ 1 & 0 & 0 & 0 & 1 & 0 \end{bmatrix}.\]
We will show that if $\Lambda = (0,1,t)$ for some transcendental $t$, then $P^{\mathbf{S}}(\Lambda)$ is linearly independent.

\

We want to show that for generic $t$, the linear forms
	\[P(0),P'(0),P^{(4)}(0),P^{(3)}(1),P(t),P^{(4)}(t)\]
	are linearly independent. It suffices to show that the result holds for at least one transcendental $t$, as we can then take a $\QQ$-automorphism mapping $t$ to any transcendental number of our choosing. The key idea is to check the special case of $t$ very close to $0$.
	
	\
	
	Taking linear combinations, we find that the span of the aforementioned linear forms is the same as the span of
	\[P(0),P'(0),P^{(4)}(0),P^{(3)}(1),\frac{P(t)-P(0)-tP'(0)}{t^2/2},\frac{P^{(4)}(t)-P^{(4)}(0)}{t}.\]
	The last two forms are polynomials in $t$, and can hence be continuously extended to $t=0$. At $t=0$, the forms are equal to
	\[P(0),P'(0),P^{(4)}(0),P^{(3)}(1),P^{(2)}(0),P^{(5)}(0).\]
	These linear forms correspond to the $2 \times 6$ \lekone\ matrix
	\[\begin{bmatrix}1 & 1 & 1 & 0 & 1 & 1 \\ 0 & 0 & 0 & 1 & 0 & 0 \end{bmatrix},\]
	and hence are linearly independent by the $m=2$ case of the theorem. By the fact that linear independence is equivalent to nonzero determinant, we can extend the linear independence of
	\[P(0),P'(0),P^{(4)}(0),P^{(3)}(1),\frac{P(t)-P(0)-tP'(0)}{t^2/2},\frac{P^{(4)}(t)-P^{(4)}(0)}{t}\]
	from $t=0$ to all $t$ in some neighborhood of $0$. Hence, they are linearly independent for some transcendental $t$, implying the desired result.
	
\

One can view the argument above as showing that we can combine the roots at $0$ and $t$, and hence combine the corresponding rows of the matrix.

\

The proof in the general case consists of two parts. First, we prove a claim generalizing the choice of linear combinations in the above proof. We then generalize the $t \to 0$ argument, allowing us to combine the $\lambda_i$'s under certain conditions. Once this is done, Lemma \ref{lem:lekoneLinIndep} follows from repeatedly combining the $\lambda_i$'s.

\subsection{Derivatives as Linear Combinations}

We first, using Theorem \ref{thm:binomlinindep}, find linear combinations that limit to derivatives.

\begin{lemma}\label{lem:derivativelincomb}
	Fix $d \in \ZZ_{\geq 0}$. Let $\mathbf{S}=(S_1,S_2)$, where $S_1,S_2 \subset [0,d]$. Suppose that in $M(\mathbf{S})$, for all $0 \leq k \leq d$, there are at most $k+1$ nonzero entries in columns $[d-k,d]$, with equality holding for $k=d$. Then there exist constants $c_{s,1}$ for $s \in S_1$ and $c_{s,2}$ for $s \in S_2$ such that the following holds:
	
	\

	For every polynomial $P$, there exists a polynomial $Q \in \CC[\lambda,\eps]$ such that
	
	\begin{equation}\label{eqn:linspan}
	P^{(d)}(\lambda)=\sum_{s \in S_1} c_{s,1} \frac{P^{(s)}(\lambda)}{s!} \eps^{s-d} +\sum_{s \in S_2} c_{s,2}  \frac{P^{(s)}(\lambda+\eps)}{s!}\eps^{s-d}+\eps Q(\lambda,\eps)\tag{$\star$}
	\end{equation}
	holds for all $\lambda,\eps \in \CC$.
\end{lemma}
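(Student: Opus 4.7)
The plan is to rewrite (\ref{eqn:linspan}) as an identity in the normalized derivatives $q_j \vcentcolon= P^{(j)}(\lambda)/j!$, reduce the existence of the constants $c_{s,i}$ to invertibility of a $(d+1) \times (d+1)$ matrix, and then apply Theorem \ref{thm:binomlinindep} to conclude.

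First I would expand $P^{(s)}(\lambda+\eps) = \sum_{k \geq 0} \frac{P^{(s+k)}(\lambda)}{k!}\eps^k$ via Taylor's theorem and regroup the right-hand side of (\ref{eqn:linspan}) by the order of derivative of $P$ at $\lambda$. Extending $c_{s,i} \vcentcolon= 0$ for $s \notin S_i$, the right-hand side (excluding the $\eps Q$ term) becomes $\sum_{j \geq 0} A_j\, q_j\, \eps^{j-d}$, where $A_j \vcentcolon= c_{j,1} + \sum_{s \in S_2,\, s \leq j} c_{s,2} \binom{j}{s}$. For this to equal $P^{(d)}(\lambda) + \eps Q(\lambda,\eps)$ as a polynomial identity in $\lambda,\eps$ for every polynomial $P$, it is necessary and sufficient that $A_j = 0$ for $j \in [0, d-1]$ and $A_d = d!$. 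Indeed, these conditions make the negative powers of $\eps$ vanish and the coefficient of $\eps^0$ correct; the remaining contributions are absorbed into $Q(\lambda,\eps) \vcentcolon= \sum_{j > d} A_j \, q_j(\lambda)\, \eps^{j-d-1}$, which is automatically a polynomial in $\lambda,\eps$ whenever $P$ is a polynomial (the sum is then finite and each $q_j$ is a polynomial in $\lambda$).

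Next I would view these as a linear system of $d+1$ equations in the unknowns $c_{s,1}$ ($s \in S_1$) and $c_{s,2}$ ($s \in S_2$). The $k = d$ case of the hypothesis forces $|S_1| + |S_2| = d+1$, making the system square. Ordering the rows so that indices $j \in S_1$ come before those in $T \vcentcolon= [0,d] \setminus S_1$, and the columns so that those in $S_1$ come first, the coefficient matrix has block-triangular form
\[\begin{pmatrix} I_{|S_1|} & * \\ 0 & B \end{pmatrix}, \qquad B \vcentcolon= \left(\binom{j}{s}\right)_{j \in T,\, s \in S_2},\]
so invertibility of the full system reduces to showing $\det B \neq 0$.

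Finally, I would translate the almost-safe hypothesis into the form required by Theorem \ref{thm:binomlinindep}. Taking complements inside $[0,d]$ and using $|S_1| + |S_2| = d+1$, the inequality $|S_1 \cap [d-k,d]| + |S_2 \cap [d-k,d]| \leq k+1$ rearranges (via $c = d-k-1$) to $|S_1 \cap [0,c]| + |S_2 \cap [0,c]| \geq c+1$ for every $c \in [0,d]$, which is exactly $|T \cap [0,c]| \leq |S_2 \cap [0,c]|$. Since $|T| = |S_2|$, Theorem \ref{thm:binomlinindep} gives $\mathrm{rank}(B) = |T|$, so $B$ is invertible, the linear system has a unique solution, and the resulting constants satisfy (\ref{eqn:linspan}) with $Q$ as above. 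I expect this last step --- the bookkeeping needed to recast the almost-safe condition as the Gessel--Viennot hypothesis --- to be the main source of technical care; everything else is routine.
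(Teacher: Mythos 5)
Your proof is correct and follows essentially the same strategy as the paper: expand around $\lambda$, read off the coefficient of each normalized derivative $P^{(j)}(\lambda)/j!$ (the paper writes $a_j$ for these Taylor coefficients, you write $q_j$, but they are the same objects), eliminate the $c_{s,1}$'s via the triangular structure, and reduce to the invertibility of the binomial-coefficient matrix via Theorem~\ref{thm:binomlinindep}. The only cosmetic difference is that you phrase the reduction explicitly as a square block-upper-triangular linear system, whereas the paper solves for the $c_{s,2}$'s directly from the $|T|$ constraints and then back-substitutes for the $c_{s,1}$'s; your phrasing is arguably cleaner at the point where the target value $d!$ may land in a row $d\in S_1$ (the paper's wording ``make the coefficient\dots zero for $t\in S_1$'' is slightly off in that case, though the underlying argument is fine). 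The translation of the almost-safe column condition into the Gessel--Viennot hypothesis $|T\cap[0,c]|\le|S_2\cap[0,c]|$ is identical to the paper's.
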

\begin{proof}
Let $P(x)=a_0(x-\lambda)^0+\cdots+a_d(x-\lambda)^d+(x-\lambda)^{d+1}Q_0(x-\lambda)$. We will view the $a_i$ as variables. We can write $P^{(s)}$ evaluated at $\lambda$ and $\lambda+\eps$ in the basis of the $a_i$ as follows:
\[\frac{P^{(s)}(\lambda)}{s!} \eps^{s-d}= \eps^{s-d}a_{s} \quad\text{and}\quad \frac{P^{(s)}(\lambda+\eps)}{s!}=\sum_{t=s}^n \binom{t}{s}\eps^{t-d}a_t.\]
Hence, the right-hand side of \ref{eqn:linspan} contains only linear combinations of $\set{\eps^{t-d}a_t \mid t \in [0,n]}$. For any choice of the $c_{s,2}$'s, we can pick $Q$ to make the coefficient of $\eps^{t-d}a_t$ zero for $t>d$, and we can pick $c_{t,1}$ to make the coefficient of $\eps^{t-d}a_t$ zero for $t \in S_1$. Hence, it suffices to pick $c_{s,2}$ such that, for all $t \in [0,d]\setminus S_1$, the coefficient of $\eps^{t-d}a_t$ in the right-hand side of \ref{eqn:linspan} matches the corresponding coefficient in the left-hand side.

\ 

Let $\mathcal{G} = [0,d] \setminus S_1$ and $\mathcal{H} = S_2$. For all $g \in \mathcal{G}$, the coefficient of $\eps^{g-d}a_g$ in some term in the right-hand side of (\ref{eqn:linspan}) is
\[\sum_{s \in \mathcal{H}}\binom{g}{s}c_{s,2}\]

The column condition implies $\abs{S_1 \cap [0,c]}+\abs{S_2 \cap [0,c]} \geq c+1$ for all $c$, so $\abs{\mathcal{G} \cap [0,c]} \leq \abs{\mathcal{H} \cap [0,c]}$ for all $c$. Now, by Theorem \ref{thm:binomlinindep}, we can find constants $c_{h,2}$ such that for all $g$ in $\mathcal{G}$,

\[\sum_{h \in \mathcal{H}}\binom{g}{h}c_{h,2}=\begin{cases}0\quad \text{if }g \neq d \\ d!\quad \text{if }g=d,\end{cases}\]

as desired.
\end{proof}

\subsection{Combining Roots}
We make use of the following fact, which allows us to take limits of linear dependences:

\begin{proposition}\label{prop:opencondition}
	Let $\gamma_1,\ldots,\gamma_\ell$ be continuous maps from $\CC$ to some complex vector space. If $\gamma_1(0),\ldots,\gamma_\ell(0)$ are linearly independent, then $\gamma_1(t),\ldots,\gamma_\ell(t)$ are linearly independent for all $t$ in some neighborhood of $0$.
\end{proposition}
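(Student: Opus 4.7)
The plan is to reduce to a finite-dimensional coordinate computation and then invoke continuity of the determinant. Since there are only finitely many vectors under consideration, at each point $t$ the set $\{\gamma_1(t),\ldots,\gamma_\ell(t)\}$ lies in a finite-dimensional subspace, and the ambient vector space in the intended applications (spaces of linear forms on $\CC[x]_{\leq N}$ for some $N$) is itself finite-dimensional, so the notion of ``continuous'' is unambiguous. I would therefore work throughout in coordinates with respect to a fixed basis of a finite-dimensional $\CC$-vector space containing all the $\gamma_j(t)$.

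First, I would extend the linearly independent tuple $\gamma_1(0),\ldots,\gamma_\ell(0)$ to a basis of the ambient space, and let $\phi_1,\ldots,\phi_\ell$ be the corresponding first $\ell$ coordinate functionals. Define the scalar function
\[
F(t) \vcentcolon= \det\bigl[\phi_i(\gamma_j(t))\bigr]_{1 \leq i,j \leq \ell}.
\]
Each entry $\phi_i(\gamma_j(t))$ is a composition of a (continuous) linear functional with the continuous map $\gamma_j$, hence is continuous in $t$; the determinant is a polynomial in these entries, so $F$ is continuous in $t$.

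Next, I would observe that $F(0)=\det(I_\ell)=1\neq 0$, since by construction $\phi_i(\gamma_j(0))=\delta_{ij}$. By continuity of $F$ there is a neighborhood $U$ of $0$ on which $F(t)\neq 0$. For any $t\in U$, the nonvanishing of this $\ell \times \ell$ minor of the coordinate matrix of $(\gamma_1(t),\ldots,\gamma_\ell(t))$ implies that these vectors are linearly independent, which is the claim.

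There is no serious obstacle here: the only mild subtlety is the interpretation of continuity when the codomain is an abstract complex vector space, and this is resolved simply by passing to the finite-dimensional subspace actually used in the paper (where the linear forms on polynomials of bounded degree live). The core of the argument is the standard fact that the set of matrices with nonzero determinant is open, transplanted to the present setting via a single coordinate minor that is guaranteed to be nonzero at $t=0$.
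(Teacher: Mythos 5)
Your proof is correct and is essentially a careful write-out of the fact that the paper cites without further justification, namely that the set of linearly independent $\ell$-tuples in a finite-dimensional complex vector space is open. Choosing coordinate functionals adapted to a basis extending $\gamma_1(0),\ldots,\gamma_\ell(0)$ and tracking the resulting $\ell\times\ell$ minor is the standard way to prove that openness, and your remark on reducing to a finite-dimensional ambient space (which is exactly the setting in which the paper applies the proposition, namely linear forms on polynomials of bounded degree) correctly disposes of the only subtlety.
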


This follows from the fact that, in a given complex vector space, linearly independent $\ell$-tuples of vectors form an open set.

\

We now prove the claim allowing us to ``combine" $\lambda_1$ and $\lambda_m$. This will be the inductive step in our proof of Lemma \ref{lem:lekoneLinIndep}.
\begin{lemma}\label{lem:safeLinIndep}
	Suppose $m>1$, and suppose that $M(\mathbf{S})$ is \lekone. Define $\Lambda ' = \Lambda \setminus \{\lambda_m\}$ and $\mathbf{S}'=(S'_1,\ldots,S'_{m-1})$, where
	\begin{itemize}
		\item $S'_i = S_i$ for $2 \leq i \leq m-1$
		\item $s \in S'_1$ if and only if for some $t\leq s$, we have $\abs{S_1 \cap [t,s]} + \abs{S_m \cap [t,s]} \geq s-t+1$.
	\end{itemize}
	Then, we have:
	\begin{enumerate}
		\item[(a)] the matrix $M(\mathbf{S}')$ is \lekone,
		\item[(b)] if $P^{\mathbf{S}'}(\Lambda')$ is linearly independent, then so is $P^{\mathbf{S}}(\Lambda)$, and
		\item[(c)] if $\Lambda$ is generic, so is $\Lambda'$.
	\end{enumerate}
\end{lemma}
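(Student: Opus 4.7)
The plan is to dispatch the three parts in the order (c), (a), (b). Part (c) is immediate: any nontrivial affine algebraic dependence $P(x_1, \ldots, x_{m-1})$ of $\Lambda'$ extends to one of $\Lambda$ by setting $\tilde P(x_1, \ldots, x_m) := P(x_1, \ldots, x_{m-1})$ with $x_m$ a dummy variable, so $\Lambda$ generic forces $\Lambda'$ generic.

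For (a), I reformulate $S'_1$ via a greedy ``token'' process: scan $i = 0, 1, \ldots, n$, maintain $T_i$ with $T_{-1} := 0$, add $d_i := [i \in S_1] + [i \in S_m]$ tokens at step $i$, and declare $i \in S'_1$ (decrementing $T$ by $1$) whenever $T_{i-1} + d_i \geq 1$. A short induction matches this with the $[t,s]$-witness definition in the lemma. The \lekone\ condition on $\mathbf{S}$ then forces $T_n = 0$: if the last $q$ with $T_q = 0$ had $q < n$, the greedy would spend one token at every $i \in [q+1, n]$, giving $T_n = \abs{S_1 \cap [q+1, n]} + \abs{S_m \cap [q+1, n]} - (n - q) \leq 0$ by the \lekone\ condition on $[q+1, n]$, contradicting $T_n > 0$. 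To bound the last $k+1$ columns of $M(\mathbf{S}')$, set $p := n - k$ and let $t^* \in [0, p]$ be the largest index with $T_{t^*-1} = 0$; then $T_j > 0$ and hence $j \in S'_1$ for $j \in [t^*, p-1]$, so $T_{p-1} = \abs{S_1 \cap [t^*, p-1]} + \abs{S_m \cap [t^*, p-1]} - (p - t^*)$, and bounding $\abs{S'_1 \cap [p, n]}$ by the tokens $T_{p-1} + \abs{S_1 \cap [p, n]} + \abs{S_m \cap [p, n]}$ available in $[p, n]$ yields
\begin{align*}
\abs{S'_1 \cap [p,n]} + \sum_{i=2}^{m-1} \abs{S_i \cap [p, n]} &\leq T_{p-1} + \sum_{i=1}^m \abs{S_i \cap [p, n]} \\
&\leq \sum_{i=1}^m \abs{S_i \cap [t^*, n]} - (p - t^*) \\
&\leq n - p + 1,
\end{align*}
using the \lekone\ condition of $\mathbf{S}$ on $[t^*, n]$ in the last step.

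For (b), I parameterize $\lambda_m = \lambda_1 + \eps$ and combine Lemma \ref{lem:derivativelincomb} with Proposition \ref{prop:opencondition}. For each $s \in S'_1$ I construct a functional $\phi_s(\eps)$ -- a linear combination (with possibly Laurent-in-$\eps$ coefficients, designed so the poles at $\eps = 0$ cancel) of the forms in $P^{\mathbf{S}}(\Lambda(\eps))$ -- extending continuously in $\eps$ so that $\phi_s(0)$ sends $P \mapsto P^{(s)}(\lambda_1)$. For $s \in S_1$ take $\phi_s(\eps) := P^{(s)}(\lambda_1)$; for $s \in S_m \setminus S_1$ take $\phi_s(\eps) := P^{(s)}(\lambda_1 + \eps)$; for the remaining $s \in S'_1 \setminus (S_1 \cup S_m)$ pick the largest $t_0 \leq s$ with $\abs{S_1 \cap [t_0, s]} + \abs{S_m \cap [t_0, s]} = s - t_0 + 1$. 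Such a $t_0$ exists because the discrete function $g(t) := \abs{S_1 \cap [t, s]} + \abs{S_m \cap [t, s]} - (s - t + 1)$ jumps by at most $\pm 1$, starts at $g(s) = -1$, and becomes nonnegative at any witness for $s \in S'_1$. Now apply Lemma \ref{lem:derivativelincomb} to $R(x) := P^{(t_0)}(x)$ with $d = s - t_0$ and the shifted sets $(S_1 \cap [t_0, s]) - t_0$ and $(S_m \cap [t_0, s]) - t_0$; the column hypothesis of that lemma follows from the maximality of $t_0$, which forces $g(t) < 0$ on $(t_0, s]$.

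Together with the unchanged $\psi_{s, i} : P \mapsto P^{(s)}(\lambda_i)$ for $2 \leq i \leq m - 1$, these $\phi_s$'s give exactly $\abs{\mathbf{S}'} = \abs{\mathbf{S}}$ functionals (equality via $T_n = 0$) coinciding at $\eps = 0$ with $P^{\mathbf{S}'}(\Lambda')$, linearly independent by hypothesis. Proposition \ref{prop:opencondition} then gives linear independence for $\eps$ in a neighborhood of $0$, and since these functionals are equinumerous with, and lie in the span of, the source forms $P^{\mathbf{S}}(\Lambda(\eps))$, the source forms themselves are linearly independent there. Linear independence of $P^{\mathbf{S}}(\Lambda)$ is the nonvanishing of an integer-coefficient determinant polynomial in the $\lambda_i$, so this one instance forces the polynomial to not be identically zero, hence nonzero at every $\Lambda$ with $\QQ$-algebraically independent coordinates, and via Lemma \ref{lem:shift} together with the invariance of the configuration under $\Lambda \mapsto a + b\varphi(\Lambda)$, at every affinely algebraically independent $\Lambda$. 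I expect the main obstacle to be verifying the column hypothesis of Lemma \ref{lem:derivativelincomb} for the chosen $t_0$, which amounts to careful $\pm 1$ bookkeeping on $g$.
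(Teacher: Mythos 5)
Your argument is correct and follows essentially the same route as the paper. Part (c) is the same trivial observation. Part (a) recasts the paper's chip-firing normalization of $\mathbf{1}_{S_1}+\mathbf{1}_{S_m}$ as a left-to-right greedy token scan with running carry $T_i$; this is equivalent (your $T_i$ plays the role of the paper's invariant $C_s(v)$), and your direct bound on $\abs{S'_1 \cap [p,n]}$ is a clean version of the paper's computation. Part (b) applies Lemma \ref{lem:derivativelincomb} and Proposition \ref{prop:opencondition} with the same choice of $t_0$ as the paper. Your one genuine addition is the final transfer step in (b): you extract an integer-coefficient minor whose nonvanishing witnesses linear independence, note it is nonzero at the perturbed tuple $(\lambda_1,\dots,\lambda_{m-1},\lambda_1+\eps)$, and conclude it is nonzero at every affinely algebraically independent $\Lambda$. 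The paper's written proof of (b) stops at the perturbed tuple and leaves this transfer implicit (it is only sketched in the earlier demonstration subsection via a $\QQ$-automorphism), so you are filling in a small but real gap; as you observe, this yields (b) only for generic $\Lambda$, but that is exactly what the induction in Lemma \ref{lem:lekoneLinIndep} needs.
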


\begin{proof}
The proof of (c) is clear. We will begin by proving (a).

\

Consider the row vector obtained as follows:

\begin{enumerate}
	\item Add the first and $m$th rows of $M(\mathbf{S})$, obtaining some vector $v$.
	\item We index the components of $v$ with $0,1,\ldots,n$. If, for some $j$, the $j$th component of $v$ is at least $2$, subtract $1$ from the $j$th component and add $1$ to the $(j+1)$th component.
	\item Repeat the previous step until it cannot be repeated anymore.
\end{enumerate}

This aligns with the intuition of combining the roots of row $1$ and row $m$, since we expect a double root of $P^{(j)}$ to become a root of $P^{(j)}$ and a root of $\paren{P^{(j)}}'=P^{(j+1)}$. We claim that, no matter which choices are made during step 2, this process will always result in the row vector corresponding to $S'_1$.

\

Let $v_i$ denote the vector obtained in step 1, and let $v_f$ denote the vector obtained at the end of the process. We use $v$ to denote an arbitrary vector at any point in the process.

\

For each $s$, consider the quantity $C_s(v) \vcentcolon= \max_{t \leq s}\paren{v \cdot \mathbf{1}_{[t,s]}-(s-t+1)}$, where $\mathbf{1}_{[t,s]}$ is the row vector that is $1$ on the columns $[t,s]$ and $0$ elsewhere. During any application of step 2 above, $C_s(v)$ is unchanged if $j<s$, is decreased by $1$ if $j=s$, and is unchanged if $j>s$. In the $j=s$ case, taking $t=s$, we must have that $C_s(v) \geq 0$ after the application of step 2. Hence, $C_s(v_i) \geq 0$ if and only if $C_s(v_f) \geq 0$.

\

Since $M(\mathbf{S})$ is \lekone, we have $C_n(v_i)\leq 0$. Hence, we must also have $C_n(v_f) \leq 0$ -- in particular, the $n$th component of $v_f$ cannot be more than $1$. By definition, the $j$th component of $v_f$ cannot be more than $1$ for any $j<n$. Thus, all components of $v_f$ are either $0$ or $1$.

\

We have $v_i \cdot \mathbf{1}_{[t,s]}=\abs{S_1 \cap [t,s]} + \abs{S_m \cap [t,s]}$, so $C_s(v_i)\geq 0 $ if and only if $s \in S'_1$. Since $C_s(v_f)\geq 0$ is equivalent to both $C_s(v_i) \geq 0$ and the $j$th component of $v_f$ being nonzero, $v_f$ is the row vector corresponding to $S'_1$.

\

As a corollary of this alternate characterization, we have $\abs{S_1}+\abs{S_m} = \abs{S'_1}$, and hence $P^{\mathbf{S}}(\Lambda)$ and $P^{\mathbf{S}'}(\Lambda')$ have the same number of elements. We also have that if $M(\mathbf{S})$ is \lekone, then so is $M(\mathbf{S}')$, as all steps of the above process preserve the property that the sum of the entries of the last $k+1$ columns is at most $k+1$ for all $k$.

\

We now prove (b). Recall that we use $P$ to denote a general polynomial of degree at most $n$. We claim that for each $s' \in S'_1$, there exist a polynomial $Q_{s'}$, independent of $\eps$ but possibly dependent on $\Lambda$ and $P$, and constants $c_{s,i}$, possibly dependent on $\eps$, such that
\[P^{(s')}(\lambda_1) -\eps Q_{s'}(\eps)=  \paren{\sum_{s \in S_1} c_{s,1} P^{(s)}(\lambda_1)}+\paren{\sum_{s  \in S_m} c_{s,m} P^{(s)}(\lambda_1+\eps)}.\]

If $s \in S_1$, the result is clear. Otherwise, take the largest $t$ such that $\abs{S_1 \cap [t,s']}+\abs{S_m \cap [t,s']} \geq s'-t+1$. For this $t$, we have $\abs{S_1 \cap [t,s']}+\abs{S_m \cap [t,s']} = s'-t+1$, as otherwise $t+1$ would have the same property. Applying Lemma \ref{lem:derivativelincomb} to the polynomial $P^{(t)}$, the degree $d=s'-t$, $\lambda=\lambda_1$, $\eps=\lambda_m-\lambda_1$, and the sets $\mathbf{S}=(\{s-t \mid s \in S_1, s\geq t\},\{s-t \mid s \in S_m, s\geq t\})$ gives the desired $Q$ and $c_{s,i}$.

\

By our assumptions, $P^{\mathbf{S'}}(\Lambda')$, which is \[\paren{P^{(S_2',\ldots,S_{m-1}')}((\lambda_2,\ldots,\lambda_{m-1}))} \bigcup \paren{\bigcup_{s' \in S'_1}\set{P^{(s')}(\lambda_1)-\eps Q_{s'}(\eps)}}\]
evaluated at $\eps=0$, is linearly independent. Every linear form in the above set is continuous in $\eps$, and hence the above set is linearly independent for some transcendental $\eps \neq 0$ by Proposition \ref{prop:opencondition}. Its span for this $\eps$ is a subspace of the span of $P^{\mathbf{S}}(\Lambda)$, and has dimension $\abs{P^{\mathbf{S}'}(\Lambda')}=\abs{P^{\mathbf{S}}(\Lambda)}$. Hence, $P^{\mathbf{S}}(\Lambda)$ is linearly independent.
\end{proof}

We can now repeatedly combine elements to prove that all \lekone\ matrices are linearly independent.

\begin{proof}[Proof of Lemma \ref{lem:lekoneLinIndep}]
	We proceed by induction on $m$. The base case of $m=1$ is trivial. The induction step follows immediately from Lemma \ref{lem:safeLinIndep}.
\end{proof}

\subsection{Enumeration}

The enumeration of generic dope matrices follows from a direct application of the cycle lemma.

\begin{definition} \normalfont We say that a sequence of ones and zeroes is \emph{$t$-dominating} if for every $\ell>0$, the number of zeroes among the first $\ell$ entries is more than $t$ times the number of ones.
\end{definition}

The cycle lemma allows us to count the number of $t$-dominating sequences with a given number of ones and zeroes.

\begin{theorem}\cite[Cycle Lemma]{cycle} \label{thm:cycle}
	 Let $a,b,t$ be nonnegative integers with $a \geq tb$. For any sequence $p_1,\ldots,p_{a+b}$ of $a$ zeroes and $b$ ones, exactly $a-tb$ of the cyclic shifts of the sequence are $t$-dominating.
\end{theorem}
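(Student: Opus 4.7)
The plan is to recast $t$-dominance as a strict-positivity condition on partial sums of a weighted walk, then count ``strict right-minima'' of the extended walk by periodicity. Assign weight $+1$ to each zero and $-t$ to each one; write $w(p_i)$ for this weight, and extend the finite sequence periodically to $(p_i)_{i \in \ZZ}$. Let $W_\ell = \sum_{i=1}^\ell w(p_i)$ (with the natural convention for $\ell \leq 0$). Then $W$ has increments in $\{+1, -t\}$ and satisfies $W_{\ell + (a+b)} - W_\ell = a - tb$ for every $\ell$. A direct unwrapping of the definition shows that the cyclic shift beginning at position $k+1$ is $t$-dominating if and only if $W_{k+\ell} > W_k$ for every $\ell \in \{1, \ldots, a+b\}$. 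Since $a - tb \geq 0$, periodicity upgrades this to ``$W_\ell > W_k$ for every $\ell > k$''; call such a $k$ a \emph{strict right-minimum}. The theorem reduces to showing $\{0, 1, \ldots, a+b-1\}$ contains exactly $a - tb$ strict right-minima.

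The case $a = tb$ is immediate: $W_{k + a + b} = W_k$ rules out every right-minimum, matching the count $0$. Henceforth assume $a > tb$, so $W_\ell \to \infty$ as $\ell \to \infty$ and $W_\ell \to -\infty$ as $\ell \to -\infty$. Then the strict right-minima form a bi-infinite sequence $\cdots < k_{-1} < k_0 < k_1 < \cdots$ with strictly increasing $W$-values.

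The key claim is that $W_{k_{i+1}} - W_{k_i} = 1$ for every $i$. Since $k_i$ is a right-minimum we have $W_{k_i + 1} > W_{k_i}$, and the allowed increments $\{+1,-t\}$ then force $W_{k_i + 1} = W_{k_i} + 1$. In the other direction, $W_{k_{i+1}}$ equals $\min\{W_\ell : \ell > k_i\}$: if some $\ell' \in (k_i, k_{i+1})$ had a smaller $W$-value, then the largest position where $W$ attains its minimum on $[\ell', \infty)$ would be a new right-minimum strictly between $k_i$ and $k_{i+1}$, contradicting consecutiveness. Hence $W_{k_{i+1}} \leq W_{k_i + 1} = W_{k_i} + 1$, and the reverse inequality $W_{k_{i+1}} > W_{k_i}$ gives $W_{k_{i+1}} = W_{k_i} + 1$.

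To finish, note that translation by one full period shifts positions by $a+b$ and $W$-values by $a - tb$. Since consecutive right-minimum $W$-values differ by exactly $1$, every window of $a + b$ consecutive integers contains precisely $a - tb$ right-minima. I expect the main obstacle to be the key claim, and in particular the step that $W_{k_{i+1}}$ equals the infimum of $W$ past $k_i$; without the ``last position of the minimum'' argument, the unit-step structure only yields the weaker bound $W_{k_{i+1}} \geq W_{k_i} + 1$, which is not enough to pin down the count.
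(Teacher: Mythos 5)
The paper imports the cycle lemma as a black-box citation (to \cite{cycle}) and gives no proof of its own, so there is nothing in-paper to compare against; I am evaluating your argument on its own merits. Your proof is correct, and it is the classical Dvoretzky--Motzkin-style lattice-walk argument: encode zeroes as $+1$ steps and ones as $-t$ steps, observe that the shift starting at $k+1$ is $t$-dominating iff $k$ is a strict right-minimum of the periodically extended walk $W$, and then count strict right-minima in one period. Your reduction is accurate (the partial sum $z_\ell - t o_\ell$ of the shifted sequence is exactly $W_{k+\ell}-W_k$), the dispatch of $a=tb$ is correct, and the unit-step lemma $W_{k_{i+1}}=W_{k_i}+1$ is proved cleanly --- in particular you correctly identify and supply the ``last position achieving the minimum on $[\ell',\infty)$'' argument, without which $W_{k_{i+1}}$ could not be pinned down to $\min\{W_\ell : \ell>k_i\}$. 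Two small points you leave implicit are worth a sentence each in a polished write-up: (i) that strict right-minima exist and form a genuinely bi-infinite sequence when $a>tb$ (the existence of arbitrarily large ones is the same ``last position of the running minimum'' trick applied to $[\ell_0,\infty)$, and arbitrarily small ones follow because $W\to-\infty$ lets you find a position left of any given right-minimum with strictly smaller $W$-value, then run the same trick); and (ii) the final step, where ``every window of $a+b$ consecutive integers contains exactly $a-tb$ right-minima'' deserves the observation that $W$ restricted to the right-minima is a strictly increasing bijection onto a translate of $\ZZ$ with unit gaps, so the period-$(a+b)$ translation, which adds $a-tb$ to $W$-values, advances the index of a right-minimum by exactly $a-tb$. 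With those two sentences added, the argument is complete.
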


\begin{proof}[Proof of Theorem \ref{thm:enumeration}]
	To prove the assertion for fixed $k$, consider the map from $\paren{c_{ij}}_{i \in [m],j \in [0,n]} \in \D_n^{\gen(m)}$ to $(m-1)$-dominating sequences $a_1,\ldots,a_{m(n+1)}$ given by $c_{ij} \leftrightarrow a_{m (n+1-j)+i}$. By Theorem \ref{thm:genclass}, this is a bijection between elements of $\D_n^{\gen(m)}$ with $k$ ones and $m$-dominating sequences with $k$ ones.
	
	\
	
	By Theorem \ref{thm:cycle}, of the $\binom{(n+1)m}{k}$ length-$m(n+1)$ sequences with $k$ ones and $m(n+1)-k$ zeroes,
	\[\frac{n+1-k }{n+1} \dbinom{(n+1)m}{k}=\binom{(n+1)m-1}{k}-(m-1)\binom{(n+1)m-1}{k-1}\]
	of them are $(m-1)$-dominating, using the convention $\binom{N}{-1}=0$, implying the assertion for fixed $k$. Summing over $0 \leq k \leq N$ gives the desired formula for $\abs{\D_n^{\gen(m)}}$.
\end{proof}

\begin{remark}
	When $m=2$, the size of $\D_n^{\gen(m)}$ simplifies to $\binom{2n+1}{n}$. In this case, a direct counting argument is possible. The earlier map gives a bijection between $\D_n^{\gen(2)}$ and $1$-dominating $\{0,1\}$ sequences. Deleting the first element of the sequence and treating zeroes as ups and ones as downs, the $1$-dominating $\{0,1\}$ sequences are in bijection with length-$(2n+1)$ left factors of Dyck paths, of which there are $\binom{2n+1}{n}$.
\end{remark}

From the formula in Theorem \ref{thm:enumeration}, we can find good closed-form estimates for $\abs{D_n^{\gen(m)}}$. For $m=1,2$, we have the exact formulas $2^n$ and $\binom{2n+1}{n}$, respectively. For larger $m$, we have the following:

\begin{corollary}\label{cor:genericbounds}
	For $m \geq 3$ and $n \geq 1$, we have
	\[\frac{1}{n+1}\dbinom{(n+1)m}{n} \leq \abs{\D_n^{\gen(m)}} \leq \paren{1+\frac{1}{m-2}}^2 \frac{1}{n+1}\dbinom{(n+1)m}{n}.\]
\end{corollary}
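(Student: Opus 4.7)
The plan is to work directly from the ``per-$k$'' formula in Theorem \ref{thm:enumeration} rather than from the closed form. Since $\frac{n+1-k}{n+1}\binom{(n+1)m}{k}$ vanishes at $k=n+1$, summing over all $k$ gives
\[\abs{\D_n^{\gen(m)}} = \frac{1}{n+1}\sum_{k=0}^n (n+1-k)\binom{(n+1)m}{k}.\]
Both bounds then reduce to estimating this finite sum against its top term $\frac{1}{n+1}\binom{(n+1)m}{n}$.

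The lower bound is immediate: every summand is nonnegative, so the sum is at least the $k=n$ term, which is exactly $\frac{1}{n+1}\binom{(n+1)m}{n}$.

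For the upper bound, the key observation will be that the binomial coefficients $\binom{N}{k}$ with $N=(n+1)m$ decay geometrically as $k$ decreases away from $n$. Specifically, for every $1 \leq k \leq n$,
\[\frac{\binom{N}{k-1}}{\binom{N}{k}} = \frac{k}{N-k+1} \leq \frac{n}{n(m-1)+m+1} < \frac{1}{m-1},\]
so iterating gives $\binom{N}{n-j} < (m-1)^{-j}\binom{N}{n}$ for all $j \geq 0$. Reindexing the sum by $j=n-k$ and extending to an infinite geometric-weighted series bounds $\sum_{k=0}^n (n+1-k)\binom{N}{k}$ above by $\binom{N}{n}\sum_{j=0}^{\infty}(j+1)(m-1)^{-j}$. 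Evaluating via the identity $\sum_{j\geq 0}(j+1)r^j = (1-r)^{-2}$ with $r = 1/(m-1)$ (which is valid precisely because $m\geq 3$) yields the factor $\paren{\frac{m-1}{m-2}}^2 = \paren{1+\frac{1}{m-2}}^2$. Dividing by $n+1$ completes the bound.

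There is no real obstacle here: the only delicate point is matching the constant, which forces the choice of the ratio $1/(m-1)$ in the geometric decay estimate — this is why the bound takes the particular shape $(1+1/(m-2))^2$ and explains the hypothesis $m \geq 3$ (without which the geometric series diverges).
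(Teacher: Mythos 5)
Your proposal is correct and matches the paper's argument almost exactly: both proofs take the $k=n$ term for the lower bound, and for the upper bound both establish the geometric-decay estimate $\binom{(n+1)m}{n-j} \leq (m-1)^{-j}\binom{(n+1)m}{n}$ (yours via the single-step ratio $k/(N-k+1)$, the paper via a direct product bound, a purely cosmetic difference) and then apply $\sum_{j\geq 0}(j+1)(m-1)^{-j} = (1+\tfrac{1}{m-2})^2$.
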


\begin{proof}
	We use the formula in \ref{thm:enumeration}. The lower bound follows by considering the $k=n$ term. For the upper bound, note that the $k=n-\ell$ term is
	
\begin{align*}
\frac{\ell+1}{n+1}\binom{(n+1)m}{n-\ell} &\leq \frac{1}{n+1} \binom{(n+1)m}{n} \left[ \frac{(\ell+1)n^\ell}{((n+1)m-n)^\ell} \right]\\
&\leq \frac{1}{n+1} \binom{(n+1)m}{n} \left[ (\ell+1)(m-1)^{-\ell}\right].
\end{align*}
Summing over $0 \leq \ell \leq n$, we have
\[\abs{\D_n^{\gen(m)}} \leq \frac{1}{n+1} \binom{(n+1)m}{n} \cdot  \paren{\sum_{\ell=0}^\infty (\ell+1)(m-1)^{-\ell}}=\paren{1+\frac{1}{m-2}}^2 \frac{1}{n+1}\dbinom{(n+1)m}{n},\]
as desired.
\end{proof}

\section{$\D_n^m$ for Small $m$}

In this section, we will prove Theorem \ref{thm:normalmbounds}. We also remark that, combining our lower bound and upper bound, we have the following asymptotic estimate for $\abs{\D_n^m}$ when $m=o(n)$:

\begin{theorem}[Corollary of Theorem \ref{thm:normalmbounds}]\label{thm:correct}
	For $m(t),n(t)\fromto{\ZZ_{>0}}{\ZZ_{>0}}$ satisfying $n(t) \to \infty$ and $m=o(n)$, we have \[ \log \abs{\D_n^m} = (1+o(1))\log \binom{mn}{n}.\]
\end{theorem}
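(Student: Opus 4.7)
The plan is to derive Theorem \ref{thm:correct} as an immediate consequence of Theorem \ref{thm:normalmbounds} combined with the asymptotic estimate $\log \binom{mn}{n} = n \log m + O(n)$ from Section \ref{sec:prelims}. Theorem \ref{thm:normalmbounds} gives
\[(1+o(1)) \log\paren{n^m \binom{mn}{n}} \leq \log \abs{\D_n^m} \leq (1+o(1)) \log\paren{n^{2m} \binom{mn}{n}},\]
so the whole statement reduces to showing $m \log n = o(\log \binom{mn}{n})$ under the hypothesis $m = o(n)$ (with $m \geq 2$, inherited from the hypotheses of Theorem \ref{thm:normalmbounds}). Once this is established, both the upper and lower bounds collapse to $(1+o(1)) \log \binom{mn}{n}$.

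To verify the estimate $m \log n = o(\log \binom{mn}{n})$, I would split into cases. If $m$ stays bounded, then $\log \binom{mn}{n} = \Theta(n)$ (directly from the preliminary estimate, or from $\binom{mn}{n} \geq \binom{2n}{n} \geq 2^n$), while $m \log n = O(\log n)$, so the claim is immediate. If $m \to \infty$, the preliminary estimate upgrades to $\log \binom{mn}{n} \sim n \log m$, so it suffices to show $m \log n = o(n \log m)$; I would do this by a further split. When $m \leq \sqrt n$, we have $m \log n \leq \sqrt n \log n = o(n)$, which is $o(n \log m)$ since $\log m \to \infty$. When $m > \sqrt n$, we use $\log m > \tfrac12 \log n$ to get $n \log m = \Omega(n \log n)$, while $m \log n = o(n) \cdot \log n = o(n \log n)$, so again $m \log n = o(n \log m)$.

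No substantive obstacle is anticipated; the entire argument is a short asymptotic computation once Theorem \ref{thm:normalmbounds} is in hand. The only mild subtlety is the subcase $m > \sqrt n$, where $\log m$ and $\log n$ differ by only a constant factor and one must extract the decay from the hypothesis $m/n \to 0$ rather than from $\log m$ being much smaller than $\log n$.
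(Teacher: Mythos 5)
Your argument is correct and is the intended route: the paper states Theorem~\ref{thm:correct} as a direct corollary of Theorem~\ref{thm:normalmbounds} and leaves the asymptotic comparison unverified, which is precisely what you supply. One small presentational point: the top-level dichotomy ``$m$ stays bounded'' versus ``$m \to \infty$'' is not exhaustive for an arbitrary sequence $m(t)$ (it could oscillate), and in fact your inner split already does all the work without it. A cleaner packaging is a single pointwise split on whether $m(t) \le \sqrt{n(t)}$: when $m \le \sqrt n$, use $m \log n \le \sqrt n \log n = o(n)$ together with the uniform bound $\log\binom{mn}{n} \ge \log\binom{2n}{n} = \Theta(n)$ (valid for all $m \ge 2$, whether or not $\log m \to \infty$); when $m > \sqrt n$, use $\log m > \tfrac12 \log n$ to get $\log\binom{mn}{n} = \Omega(n\log n)$ while $m \log n = o(n)\cdot\log n = o(n\log n)$. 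This covers every $t$ simultaneously. Your observation that the $m > \sqrt n$ regime is where $m/n \to 0$ is genuinely needed is exactly the right thing to flag.
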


\subsection{Upper Bound}

We will prove the following upper bound.

\begin{proposition}\label{prop:upperBound}
	For $m(t),n(t)\fromto{\ZZ_{>0}}{\ZZ_{>0}}$ satisfying $n(t) \to \infty$, we have \[\log \abs{\D_n^{m}} \leq (1+o(1))\log \paren{n^{2m}\binom{mn}{n}}.\]
\end{proposition}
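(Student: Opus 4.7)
The plan is to apply the zero-pattern counting method of R\'onyai, Babai, and Ganapathy \cite{zeroPatterns} to the polynomials $f_{ij}(a,\lambda) := P^{(j)}(\lambda_i)$, viewed in the $(n+1)+m$ variables $a=(a_0,\ldots,a_n)$ and $\lambda=(\lambda_1,\ldots,\lambda_m)$. Applying the standard RBG bound to these polynomials directly (as in \cite{dope}) yields the weaker bound of order $\binom{mn^2}{m+n}$; my improvement exploits the product structure of the certifying polynomials more aggressively.

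First, I would set up the RBG framework. For each $D \in \D_n^m$, fix a realization $(P_D,\Lambda_D)$ and let $\mathbf{x}_D \in \CC^{n+1+m}$ denote the corresponding evaluation point. Form the certifying polynomial $F_D := \prod_{(i,j)\,:\,D_{ij}=0} f_{ij}$. Using the partial order $D \le D'$ iff the $1$-positions of $D$ are contained in those of $D'$, the usual check gives $F_D(\mathbf{x}_{D'}) \ne 0$ iff $D' \le D$, so after ordering by a linear extension the family $\{F_D\}_{D \in \D_n^m}$ is linearly independent in $\CC[a,\lambda]$.

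Next, I would exploit the factorization $F_D = \prod_{i=1}^m G_i(\lambda_i;a)$ where $G_i(\lambda;a) := \prod_{j \notin R_i} P^{(j)}(\lambda)$ and $R_i$ is the set of $1$-positions in row $i$. Only factors indexed by $(i,\cdot)$ contribute to $\deg_{\lambda_i}$, giving $\deg_{\lambda_i}(F_D) \le \sum_{j \notin R_i}(n-j) \le n(n+1)/2$. Since each $f_{ij}$ is linear in $a$, the polynomial $F_D$ has total $a$-degree at most $m(n+1) - |D|$. Thus each $F_D$ lies in the ambient space $V$ of polynomials with per-$\lambda_i$ degree at most $n(n+1)/2$ and total $a$-degree at most $m(n+1)$, of dimension at most $\paren{n(n+1)/2 + 1}^m \cdot \binom{(m+1)(n+1)}{n+1}$. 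Combining $|\D_n^m| \le \dim V$ with the asymptotic estimates of Section \ref{sec:prelims} yields the proposed upper bound.

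The main obstacle is achieving the $(1+o(1))$-tightness uniformly in $m$. The naive coefficient-dimension $\binom{(m+1)(n+1)}{n+1} \sim \binom{(m+1)n}{n}$ exceeds the target $\binom{mn}{n}$ by $\Theta(n)$ in the logarithm when $m$ is bounded, producing only a multiplicative-constant bound in log scale rather than the required $(1+o(1))$ factor. To close this gap, I would refine the analysis by partitioning $\D_n^m$ according to $|D|$: matrices with larger $|D|$ yield $F_D$'s of strictly lower $a$-degree and hence smaller ambient subspaces, and summing over $|D|$ should concentrate the contribution near $|D|\approx n$ (where generic-type dope matrices predominate), replacing the $m(n+1)$ appearing in the coefficient degree by something nearer to $mn$. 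Alternatively, one could look for a thinner certifying polynomial retaining just enough factors to satisfy the RBG triangularity condition while dropping factors that are redundant given the structure of dope matrices.
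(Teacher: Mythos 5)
You have the right overall framework---RBG-style zero-pattern counting with the product certifying polynomials $F_D$ factoring across the $\lambda_i$'s, so that $\deg_{\lambda_i} F_D$ is controlled separately for each $i$---and you correctly diagnose that the naive execution overshoots the target by a constant factor in $\log$ when $m$ is bounded. But the remedy you sketch is not the right one, and the paper's proof needs a different idea at exactly the point where you stall.

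Your proposal says to partition by $|D|$ and ``sum over $|D|$,'' hoping the contribution concentrates near $|D|\approx n$. If for each value of $|D|$ you use the dimension bound on the ambient space of $F_D$'s, the sum is \emph{dominated by small $|D|$}, not concentrated near $n$: $F_D$ with $|D|$ near $0$ has nearly all $mn$ factors and hence $a$-degree nearly $mn$, and that single slice already forces the ambient dimension up to around $\binom{m(n+1)}{n}$. So summing dimension bounds over all slices cannot land you at $\binom{mn}{n}$. The missing idea is a \emph{dichotomy with two different bounds}: for zero-patterns with at most $n$ zeros, do not use a dimension bound at all---just count them combinatorially as subsets of size at most $n$ of the $mn$ index pairs, giving at most $\sum_{i=0}^{n}\binom{mn}{i}\le n\binom{mn}{n}$; for zero-patterns with more than $n$ zeros, the certifying polynomial has strictly fewer than $mn-n$ linear factors in the $a$-variables, so these $F_D$'s all live in a single space of $a$-degree at most $mn-n$, of dimension $\binom{mn}{n}$, multiplied by the $\paren{\frac{n^2+n}{2}+1}^m$ factor from the $\lambda_i$ degrees. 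Adding the two bounds gives $\binom{mn}{n}\paren{n+\paren{\frac{n^2+n+2}{2}}^m}$, whose logarithm is $(1+o(1))\log\paren{n^{2m}\binom{mn}{n}}$.

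Two smaller points you should also adopt: normalize $P$ to be monic (eliminating $a_n$, so that the $a$-variables number $n$ rather than $n+1$), and discard $j=n$ (since $P^{(n)}$ is a nonzero constant and never contributes to the zero pattern). Without these reductions the index set has $m(n+1)$ elements and $n+1$ coefficient variables, which is precisely where the parasitic $\binom{(m+1)(n+1)}{n+1}$ in your dimension count comes from. With them, the certifying polynomials have at most $mn$ factors to begin with, and the small/large split at threshold $n$ is exactly what converts the naive $\binom{mn+n}{n}$ into $\binom{mn}{n}$.
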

Let $f_1,\ldots,f_T$ be a sequence of polynomials in $N$ variables.
\begin{definition}
	\normalfont We define a \emph{zero-pattern} to be a subset $S$ of $[T]$ of the form
	\[S = \left\{ a | f_a(u)=0\right\}\]
	for some $u \in \CC^N$.
\end{definition}
Our proof closely follows the proof of the following result of R\'{o}nyai, Babai, and Ganapathy, which provides a bound on the number of zero-patterns of general sequences of polynomials:
\begin{theorem}\cite[Theorem 4.1]{zeroPatterns}\label{thm:zeroPatterns}
	Let $f_1,\ldots,f_T$ be a sequence of polynomials in $N$ variables, where each polynomial has degree at most $d$. For any $t$, the number of zero-patterns is at most
	\[\paren{\binom{T}{0}+\cdots+\binom{T}{t}}+\binom{N+(T-t-1)d}{N}.\]
\end{theorem}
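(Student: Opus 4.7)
The plan is to associate to each realized zero-pattern $S_i$, witnessed by a point $u_i \in \CC^N$, the polynomial
\[g_i \vcentcolon= \prod_{a \in [T] \setminus S_i} f_a,\]
and then to bound the number of zero-patterns by combining an elementary count for ``small'' patterns with a linear-algebraic count for ``large'' ones. The key observation is that $g_i(u_j) \neq 0$ iff $f_a(u_j) \neq 0$ for every $a \notin S_i$, which by the very definition of $S_j$ is equivalent to $S_j \subseteq S_i$; in particular $g_i(u_i) \neq 0$.

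The heart of the argument is showing that the polynomials $\{g_i\}$ are linearly independent. Suppose $\sum_i \alpha_i g_i \equiv 0$ with some $\alpha_j \neq 0$, and among such $j$ pick one maximizing $|S_j|$. Evaluating at $u_j$, only terms with $S_j \subseteq S_i$ can survive, which forces $|S_i| \ge |S_j|$; by maximality $|S_i| = |S_j|$, so $S_i = S_j$ and hence $i = j$ (distinct zero-patterns give distinct indices). Thus $\alpha_j g_j(u_j) = 0$, contradicting $g_j(u_j) \neq 0$.

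With linear independence in hand, I would split the zero-patterns according to whether $|S_i| \leq t$ or $|S_i| \geq t+1$. Patterns of the first kind are subsets of $[T]$ of size at most $t$, of which there are at most $\binom{T}{0} + \cdots + \binom{T}{t}$. Patterns of the second kind satisfy $\deg g_i \leq d(T - |S_i|) \leq d(T - t - 1)$, so the corresponding $g_i$ all lie in the $\binom{N + d(T-t-1)}{N}$-dimensional space of polynomials in $N$ variables of degree at most $d(T-t-1)$; linear independence then caps the count in this class at that dimension. Summing the two bounds gives the claimed estimate.

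The main obstacle is packaging the linear independence step cleanly: one has to recognize the inclusion relation $S_j \subseteq S_i$ as a partial order, pick an index maximal in this order among those with nonzero coefficient, and use distinctness of the $S_i$ to eliminate all surviving terms when evaluating at $u_j$. Once this triangular structure is established, the degree bound on $g_i$ and the split-and-count are routine.
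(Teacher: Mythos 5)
Your proof is correct and is precisely the standard R\'{o}nyai--Babai--Ganapathy argument: attach $g_i = \prod_{a\notin S_i} f_a$ to each realized zero-pattern, show $g_i(u_j)\neq 0$ iff $S_j\subseteq S_i$, deduce linear independence by evaluating a hypothetical dependency at the witness of a maximal-size pattern with nonzero coefficient, and then split the count into small patterns (bounded by $\binom{T}{0}+\cdots+\binom{T}{t}$) and large patterns (whose $g_i$ live in the $\binom{N+(T-t-1)d}{N}$-dimensional space of polynomials of degree at most $(T-t-1)d$). The paper itself does not re-prove Theorem~\ref{thm:zeroPatterns} but imports it by citation; however, the same mechanism — including the identical linear-independence argument — is exactly what the paper adapts in the proof of Proposition~\ref{prop:upperBound}, so your approach aligns with both the cited source and the paper's own use of it.
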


Our proof uses the key ideas from the proof of Theorem \ref{thm:zeroPatterns}, with the main difference being that we can estimate the degrees of polynomials more carefully in the specific case of the sequence $\set{P^{(j)}(\lambda_i)}$.

\begin{proof}[Proof of Proposition \ref{prop:upperBound}]
	Since we assume that $P$ is of degree $n$, we may assume $P$ is monic, as scaling does not affect roots. Let $P=x^n+a_{n-1}x^{n-1}+\cdots+a_0$ and $\Lambda=(\lambda_1,\ldots,\lambda_m)$, where $a_j,\lambda_i \in \CC$ are variables. We view each $P^{(j)}(\lambda_i)$ as a polynomial in $(a_0,\ldots,a_{n-1},\lambda_1,\ldots,\lambda_m)$. Let $S_1,\ldots,S_M \subset  [m] \times [0,n-1]$ denote the zero-patterns of $\{P^{(j)}(\lambda_i)\}$, where we may exclude $j=n$ since $P^{(n)}$ is the constant polynomial $(n-1)!$.
	
	\
	
	Call a zero-pattern \emph{large} if it has size larger than $n$, and \emph{small} otherwise. The number of small zero-patterns is at most
	\[\binom{mn}{0}+\binom{mn}{1}+\cdots+\binom{mn}{n} \leq n \binom{mn}{n}.\]
	For each large zero-pattern $S_k$, consider the polynomial
	\[Q_k(P,\Lambda) \vcentcolon= \prod_{(i,j) \not\in  S_k} P^{(j)}(\lambda_i).\]
	We claim that the $Q_k$ are linearly independent (this is proven in \cite[Theorem 1.1]{zeroPatterns}). Suppose, for the sake of contradiction, that some linear combination $\sum_k \alpha_k Q_k$ is identically zero, where the $\alpha_k$ are not all zero. Consider some index $\ell$ that maximizes $\abs{S_\ell}$ over all $\ell$ with $\alpha_\ell\neq0$. Evaluating $\sum_k \alpha_k Q_k$ at the $(P,\Lambda)$ corresponding to the zero pattern $S_\ell$ gives $\alpha_\ell=0$, which is a contradiction, as desired.
	
	\
	
	Hence, the $Q_k$'s are linearly independent. Furthermore, all of the monomials of the $Q_k$'s corresponding to large zero-patterns are in the set
	\[\set{ a_{0}^{b_{0}}\cdots a_{n-1}^{b_{n-1}} \cdot \lambda_1^{c_1}\cdots \lambda_m^{c_m}\mid b_0+b_1+\cdots+b_{n-1} \leq mn-n,\ 0 \leq c_i \leq \frac{n^2+n}{2}  }.\]
	This is a set of size $\binom{mn}{n} \cdot \paren{\frac{n^2+n+2}{2}}^m$, so the $Q_k$'s corresponding to large zero-patterns lie in a space of dimension $\binom{mn}{n} \cdot \paren{\frac{n^2+n+2}{2}}^m$. Hence, we have
	\[\abs{\D_n^m} \leq \binom{mn}{n} \cdot \paren{\paren{\frac{n^2+n+2}{2}}^m+n},\]
	giving the desired bound.
\end{proof}

\subsection{Lower Bound}

We now establish the lower bound on $\abs{\D_n^m}$ from Theorem \ref{thm:normalmbounds}.

\begin{proposition}\label{prop:mixedConstruct}
	For $m(t),n(t)\fromto{\ZZ_{>0}}{\ZZ_{>0}}$ satisfying $n(t) \to \infty$ and $1<m(t) \leq \frac{n(t)^2+n(t)}{2}$, we have \[\log \abs{\D_n^m} \geq (1+o(1))\log \paren{n^{m}\binom{mn}{n}}.\]
\end{proposition}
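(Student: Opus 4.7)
The plan is to combine the generic-$\Lambda$ and generic-$P$ constructions. Set $m_1 = \min(m, \lceil n/\log n \rceil)$ and $m_2 = m - m_1$; note $m_1 \leq n/4$ for $n$ large. I will construct distinct dope matrices $M \in \D_n^m$ parameterized by pairs $(M_1, \mathbf{j})$, where $M_1$ is any safe $m_1 \times (n+1)$ matrix in $\D_n^{\gen(m_1)}$ and $\mathbf{j} = (j_1, \ldots, j_{m_2}) \in [0, n-1]^{m_2}$ is any sequence satisfying the constraint $|\{i : j_i = j\}| \leq n - j - m_1$ for every $j$. The matrix $M$ has top $m_1$ rows equal to $M_1$ and bottom $m_2$ rows equal to the standard basis vectors $e_{j_1}, \ldots, e_{j_{m_2}}$.

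To realize $M$ as a dope matrix, I would first apply Theorem~\ref{thm:genclass} to obtain a polynomial $P$ of degree $n$ and a generic tuple $\Lambda_1 = (\lambda_1, \ldots, \lambda_{m_1})$ with $D_P(\Lambda_1) = M_1$. The crucial subclaim is that for a generic such $(P, \Lambda_1)$, $P$ is maximally well-behaved: each derivative $P^{(j)}$ has $n - j$ distinct simple roots, and no root of $P^{(j)}$ outside $\Lambda_1$ coincides with a root of $P^{(j')}$ for any $j' \neq j$. Granting this, each $P^{(j)}$ has at least $n - j - m_1$ free roots (not in $\Lambda_1$ and not roots of any other derivative). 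The column constraint on $\mathbf{j}$ then allows me to pick $\lambda_{m_1 + i}$ to be a distinct free root of $P^{(j_i)}$ for each $i$, producing $D_P(\Lambda) = M$.

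For counting, by Corollary~\ref{cor:genericbounds}, $\log |\D_n^{\gen(m_1)}| = n \log m_1 + O(n)$. For valid sequences $\mathbf{j}$, a multinomial estimate shows the count has logarithm at least $(1 - o(1)) m_2 \log n$ throughout the range $m_2 \leq (n^2+n)/2 - n m_1 = (1-o(1))(n^2+n)/2$, which covers all $m \leq (n^2+n)/2$. Combining gives $\log |\D_n^m| \geq (1 - o(1))(n \log m_1 + m_2 \log n) + O(n)$. In the regime $m \leq n/\log n$, where $m_1 = m$ and $m_2 = 0$, the bound is $(1 - o(1)) n \log m + O(n)$, which equals $(1 + o(1)) \log(n^m \binom{mn}{n})$ since the latter is $n \log m + O(n)$ (using $m \log n = O(n)$). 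In the regime $m > n/\log n$, the bound becomes $(1 - o(1))(n \log n + m \log n) + O(n)$, and one verifies that this matches $(1 + o(1)) \log(n^m \binom{mn}{n}) = (1 + o(1))(m \log n + n \log m + O(n))$ by checking that the difference is $O(n \log \log n)$, a lower-order term in all subcases (whether $m \leq n$, where $n \log(n/m) \leq n \log \log n$, or $m > n$, where $n \log(m/n) = o(m \log n + n \log m)$).

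The main obstacle is the genericity subclaim: for each safe $M_1$, I need to exhibit a realization $(P, \Lambda_1)$ at which $P$'s derivatives have distinct simple roots not coinciding (outside of $\Lambda_1$) across different orders. This is a Zariski-open condition on the variety of realizations, so it suffices to exhibit a single such realization per $M_1$. The case $M_1 = 0$ follows trivially from choosing a generic $P$ with generic $\Lambda_1$. For general safe $M_1$, I would start from the explicit realization constructed in the proof of Theorem~\ref{thm:genclass} via Lemma~\ref{lem:lekoneLinIndep} and perturb within the realization variety to achieve the generic root behavior. Verifying this perturbation argument rigorously, via dimension counting on the relevant bad subvarieties (the loci where two roots of a single derivative collide or where roots of different derivatives meet), is the delicate technical point.
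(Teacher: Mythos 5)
Your plan combines the generic-$\Lambda$ and generic-$P$ constructions in the same spirit as the paper, but the key subclaim on which the whole argument rests is false as stated, and the actual machinery that makes this coupling work rigorously is missing.

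You assert that for a generic realization $(P,\Lambda_1)$ of a safe matrix $M_1\in\D_n^{\gen(m_1)}$, ``each derivative $P^{(j)}$ has $n-j$ distinct simple roots.'' This cannot hold for arbitrary safe $M_1$: the matrix $M_1$ itself can force high-multiplicity roots. For example, the $1\times(n+1)$ row $(1,1,\ldots,1,0)$ is safe, and any realization has $P=c(x-\lambda_1)^n$, so every $P^{(j)}$ has a single root of multiplicity $n-j$ and there are zero free roots of any order; your claimed lower bound of $n-j-m_1$ free roots fails badly. More generally, a row of $M_1$ with a run of $k$ ones starting at column $j$ forces $\lambda_i$ to be a root of $P^{(j)}$ of multiplicity $k$, so the number of roots of $P^{(j)}$ ``used up'' by $\Lambda_1$ is not $\leq m_1$ but rather $\sum_i(\text{run length of row }i\text{ at column }j)$, which can be nearly $n-j$ even when $m_1$ is small. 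Your constraint $|\{i:j_i=j\}|\leq n-j-m_1$ is therefore calibrated to a count that is not the right one, and no perturbation argument can rescue the subclaim, since the offending configurations are not degenerate: they are forced by $M_1$.

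The paper's proof handles exactly this problem and you should compare your proposal to it. The paper restricts to \emph{$T$-limited} saturated safe matrices (Proposition~\ref{prop:limited}): each row of $M_1$ has at most $T$ ones, bounding run lengths, and an affine-algebraic-independence argument (not a genericity claim about simple roots, which it never makes) shows the resulting $P$ has the property that for \emph{any} $\lambda$, at most $T$ entries of $D_P((\lambda))$ are one. Lemma~\ref{lem:grossbound} then does not require the roots of $P^{(j)}$ to be simple or pairwise distinct at all; instead it bounds the multiplicity of any $\lambda$ as a root of $\prod_j P^{(j)}$ by $T(T+1)/2$ via convexity of $t\mapsto t(t+1)/2$, and divides the total degree $n(n+1)/2$ by this multiplicity to lower-bound the number of available new values $\lambda_{i+1}$. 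Finally, Proposition~\ref{prop:limitlowerbound} shows that requiring $T$-limitedness does not kill the count $C(a,n,T)$, and the parameters $a,T$ are tuned differently in three regimes of $m$. Your proposal is missing the $T$-limited restriction, the affine-independence argument (Proposition~\ref{prop:limited}) that makes the ``few hits per new $\lambda$'' claim actually provable, and the multiplicity-counting trick that avoids ever having to prove simple roots. These are not cosmetic: without them, the ``delicate technical point'' you flag is not merely unverified, it is not true.

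A minor secondary issue: your range condition $m_2 \leq \frac{n^2+n}{2}-nm_1$ does not cover all $m\leq\frac{n^2+n}{2}$ when $m_1>1$ (you would need $m\leq\frac{n^2+n}{2}-(n-1)m_1$). This is fixable by shrinking $m_1$ slightly near the top of the range, but it should be addressed explicitly.
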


The idea behind the construction is, for some well chosen $a$, to start with an $a \times (n+1)$ matrix $M \in \D_n^{\gen(a)}$, to pick $P$ and $\Lambda$ such that $D_P(\Lambda)=M$, and then to append $m-a$ elements to $\Lambda$. We first prove a claim allowing us to find $P$ such that many distinct rows can be appended to $D_P(\Lambda)$.

\

Call an $m \times (n+1)$ matrix \emph{$T$-limited} if each row has at most $T$ ones. Call an $m \times (n+1)$ \lek\ matrix \emph{saturated} if it has exactly $n$ ones in total. We let $C(m,n,T)$ denote the number of $m \times (n+1)$ \lek\ $T$-limited saturated matrices.

\begin{proposition}\label{prop:limited}
	Let $\Lambda=(\lambda_1,\ldots,\lambda_a)$ be affinely algebraically independent, and $M$ be an $a \times (n+1)$ \lek\ $T$-limited saturated matrix. Then there is a degree-$n$ polynomial $P$ such that $D_P(\Lambda)=M$. Furthermore, this polynomial $P$ has the property that for any $\lambda \in \CC$, at most $T$ of the entries of $D_P((\lambda))$ are one.
\end{proposition}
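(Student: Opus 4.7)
The first assertion follows immediately from the machinery already developed: Theorem~\ref{thm:wellDefined} gives $\D_n(\Lambda) = \D_n^{\gen(a)}$, and Theorem~\ref{thm:genclass} identifies this set with the collection of all safe $a \times (n+1)$ matrices. The saturation of $M$ (total ones equal to $n$) ensures that the polynomial $P$ realizing $M$, constructed as in the proof of Theorem~\ref{thm:genclass}, has degree exactly $n$: that construction prepends $c = n - (\text{number of ones in } M)$ columns before solving for $P$, and here $c = 0$.

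For the second assertion, fix $\lambda \in \CC$ and set $S^\lambda := \{j \in [0,n] : P^{(j)}(\lambda) = 0\}$; the goal is $|S^\lambda| \leq T$. If $\lambda = \lambda_i$ for some $i$, then $S^\lambda$ is the $i$-th row of $M$ and the bound is immediate from $T$-limitedness, so assume $\lambda \notin \Lambda$. The plan is to produce an affinely algebraically independent tuple $\Lambda^{\#}$ containing $\lambda$, apply Theorem~\ref{thm:genclass} to deduce that $D_P(\Lambda^{\#})$ is safe, and read off the bound by counting ones.

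If $(\lambda_1,\ldots,\lambda_a,\lambda)$ is itself AAI, then by Theorems~\ref{thm:wellDefined} and~\ref{thm:genclass} the matrix $D_P((\lambda_1,\ldots,\lambda_a,\lambda)) \in \D_n^{\gen(a+1)}$ is safe; its total of $n + |S^\lambda|$ ones is at most $n$ (safety applied with $k = n$, i.e., to all $n+1$ columns at once), forcing $|S^\lambda| = 0$. Otherwise $(\lambda_1,\ldots,\lambda_a,\lambda)$ is AAD. Using that AAI forms a matroid structure on $\CC$---every singleton is AAI, since $\{a + b\lambda : a,b \in \CC\} = \CC$ precludes any nontrivial univariate polynomial from vanishing---the non-loop element $\lambda$ extends to a basis of $(\lambda_1,\ldots,\lambda_a,\lambda)$ of size $a$, which must have the form $\Lambda^{\#} := (\Lambda \setminus \{\lambda_{i^*}\}) \cup \{\lambda\}$ for some $i^* \in [a]$. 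Then $D_P(\Lambda^{\#}) \in \D_n^{\gen(a)}$ is safe, and it differs from $M$ only in that row $i^*$ has been replaced by $S^\lambda$; counting ones, safety yields $n - r_{i^*} + |S^\lambda| \leq n$, where $r_{i^*}$ denotes the number of ones in row $i^*$ of $M$, so $|S^\lambda| \leq r_{i^*} \leq T$.

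The only non-routine step is the matroid exchange in the AAD case, which I verify directly from the definitions. Given a nontrivial $R \in \QQ[x_1,\ldots,x_{a+1}]$ witnessing AAD of $(\lambda_1,\ldots,\lambda_a,\lambda)$, the polynomial $R$ must involve $x_{a+1}$ (else $\Lambda$ itself would be AAD, contradicting hypothesis) and must involve some $x_{i^*}$ with $i^* \leq a$ (else $R$ would reduce to a nonzero univariate polynomial vanishing on all of $\CC$). Viewing $R = 0$ as an equation in $x_{i^*}$ exhibits $\lambda_{i^*}$ as affinely algebraic over $\Lambda^{\#}$; combined with the transitivity of affine algebraic dependence---which reduces via Lemma~\ref{lem:shift} to standard transitivity of algebraic dependence over $\QQ$---any hypothetical AAD of $\Lambda^{\#}$ would compose with this relation to yield an AAD of $\Lambda$, contradicting AAI of $\Lambda$. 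Hence $\Lambda^{\#}$ is AAI, completing the proof.
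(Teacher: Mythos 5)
The overall strategy is the same as the paper's: exhibit a tuple $\Lambda_b$ obtained from $\Lambda$ by exchanging one element for $\lambda$ that is still affinely algebraically independent, then apply the safety condition of Theorem~\ref{thm:genclass} to $D_P(\Lambda_b)$ and count ones. Your first assertion and your handling of the cases $\lambda \in \Lambda$ and $(\lambda_1,\ldots,\lambda_a,\lambda)$ AAI are fine.

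The gap is in the exchange step. You take \emph{an arbitrary} nontrivial affine algebraic dependence $R$ of $(\lambda_1,\ldots,\lambda_a,\lambda)$ and claim that for \emph{any} $i^*$ with $x_{i^*}$ appearing in $R$, replacing $\lambda_{i^*}$ by $\lambda$ gives an AAI tuple $\Lambda^{\#}$. This is false. If $R_0$ is a genuine minimal dependence not involving $x_{i^*}$, then $R = x_{i^*} \cdot R_0$ is also a valid nontrivial dependence, and it \emph{does} involve $x_{i^*}$; but the equation $R=0$ does not exhibit $\lambda_{i^*}$ as affinely algebraic over $\Lambda^{\#}$ (the vanishing comes entirely from the $R_0$ factor), and indeed $\Lambda^{\#}$ can easily be AAD. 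Concretely, with $a=3$, $\Lambda=(\lambda_1,\lambda_2,\lambda_3)$ AAI, and $\lambda = \tfrac{\lambda_1+\lambda_2}{2}$, the dependence $R=(2x_4-x_1-x_2)x_3$ involves $x_3$, yet $\Lambda^{\#}=(\lambda_1,\lambda_2,\lambda)$ is AAD. Your transitivity argument therefore derives no contradiction, because its first premise (``$\lambda_{i^*}$ affinely algebraic over $\Lambda^{\#}$'') is simply not established by such an $R$. To make the argument work you need a structural fact about which $R$'s are admissible: the paper takes $Q_0$ of minimal degree, shows via a resultant computation that $Q_0$ divides every affine algebraic dependence of $(\lambda,\lambda_1,\ldots,\lambda_a)$, and only then argues that any $b$ with $x_b$ appearing in $Q_0$ yields an AAI replacement tuple. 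The same repair would work for you if you insist $R$ be irreducible and carry the irreducibility through the reduction to ordinary algebraic dependence via Lemma~\ref{lem:shift}, but as written the choice of $R$ is unconstrained and the claim fails.

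Separately, the phrase ``AAI forms a matroid structure on $\CC$'' is asserted without proof and does more work in your write-up than you acknowledge; it is exactly the nontrivial content being proven here, so invoking it as background begs the question.
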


\begin{proof}
	Using Theorem \ref{thm:genclass}, we can pick some polynomial $P$ such that $D_P(\Lambda)=M$. Fix an arbitrary $\lambda \in \CC$. We claim that for some $b \in [a]$, the $a$-tuple $\Lambda_b$ obtained from $\Lambda$ by replacing $\lambda_b$ with $\lambda$ is affinely algebraically independent. If $\Lambda$ with $\lambda$ appended is already affinely algebraically independent, the result is clear.
	
	\
	
	Otherwise, let $Q_0$ be a minimum-degree affine algebraic dependence of $(\lambda,\lambda_1,\ldots,\lambda_{a})$. We claim that $Q_0$ divides all algebraic dependences. Suppose, for the sake of contradiction, that $Q$ is another affine algebraic dependence such that $Q_0$ does not divide $Q$. Viewing $Q_0,Q$ as polynomials in the first variable and taking the resultant gives a nonzero polynomial $R$ with $R(t_1+t_1\lambda_1,\ldots,t_1+t_2\lambda_{a})=0$ for all $t_1,t_2$, contradicting the affine algebraic independence of $\Lambda$.
	
	\
	
	Now, if we choose $b$ such that $x_b$ appears in $Q_0$, we find that $\Lambda_b$ is affinely algebraically independent. For this $b$, by Theorem \ref{thm:genclass}, $D_P(\Lambda_b)$ must have at most $n$ ones, so the number of ones in $D_P((\lambda))$ is at most the number of ones in the row of $\lambda_b$, which is at most $T$ by assumption, as desired.
\end{proof}

Now, we execute the construction mentioned earlier.

\begin{lemma}\label{lem:grossbound}
	For integers $m,n,a,T$ with $0 \leq a \leq m \leq \frac{n^2+n}{T^2+T}$, we have
	\[\abs{\D_n^m} \geq C(a,n,T) \cdot \paren{\frac{n+1}{e(T^2+T)}-\frac{a}{en}}^{m-a}.\]
\end{lemma}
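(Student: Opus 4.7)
My approach is the construction sketched just before the lemma: for every $a \times (n+1)$ safe, $T$-limited, saturated matrix $M$, pick a $P$ and a $\Lambda$ that realize $M$ (using Proposition \ref{prop:limited}), then count how many ways one can extend $\Lambda$ by $m-a$ further values to produce distinct dope matrices. Since dope matrices extending different $M$'s differ already in their first $a$ rows, summing cleanly gives
\[\abs{\D_n^m} \;\geq\; C(a,n,T) \cdot \min_M\,\#\{\text{valid extensions of }M\},\]
so it suffices to show that each $M$ admits at least $\paren{\tfrac{n+1}{e(T^2+T)}-\tfrac{a}{en}}^{m-a}$ extensions.

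To count extensions from a fixed $M$, apply Proposition \ref{prop:limited} to obtain an affinely algebraically independent $\Lambda=(\lambda_1,\ldots,\lambda_a)$ and a degree-$n$ polynomial $P$ with $D_P(\Lambda)=M$ such that for every $\lambda\in\CC$ the row $R(\lambda)$ of $D_P((\lambda))$ has at most $T$ ones. Two extensions $(\lambda_{a+1},\ldots,\lambda_m)$ and $(\lambda'_{a+1},\ldots,\lambda'_m)$ (with all entries distinct and outside $\Lambda$) give the same $m\times(n+1)$ dope matrix exactly when the row-sequences $(R(\lambda_{a+i}))_i$ and $(R(\lambda'_{a+i}))_i$ coincide, so I need to lower bound the number of row-sequences achievable by such tuples.

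I plan to build the $\lambda_{a+i}$ greedily. The key inputs for the per-step bound are: (i) by Proposition \ref{prop:limited} every row has $\leq T$ ones, so each choice of $\lambda$ "uses" at most $T$ of the at most $\frac{n(n+1)}{2}$ total incidences $(\lambda,j)$ with $P^{(j)}(\lambda)=0$; (ii) exactly $n$ of these incidences are already consumed by $M$ (since $M$ is saturated); and (iii) at step $i$ the previously chosen $\lambda$'s have used up at most $T\cdot(i-1)$ further incidences. Collecting these and dividing by $T(T+1)/2$ (the maximal number of incidences a single new row-type can contribute while remaining consistent with the $T$-limitedness forced on $P$) yields, after a Stirling-type conversion from a total count of available row-types into a per-step multiplicative factor, at least $\frac{n+1}{e(T^2+T)}-\frac{a}{en}$ fresh choices at each of the $m-a$ steps; the $\frac{a}{en}$ reflects the incidences already in $M$, and the factor $\frac{1}{e}$ is the standard Stirling loss. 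Multiplying the per-step bounds and then multiplying by $C(a,n,T)$ from the outer sum gives the stated inequality.

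The main obstacle is the per-step lower bound in the last step. Making it rigorous requires (a) controlling how many distinct row-types $S\subseteq[0,n]$ with $|S|\leq T$ are actually realizable by some $\lambda\in\CC\setminus\Lambda$ for our chosen $P$, (b) tracking, along the greedy process, how quickly the pool of remaining realizable row-types shrinks as $\lambda$'s and their corresponding incidences are consumed, and (c) converting this depletion analysis into the clean product form $\paren{\tfrac{n+1}{e(T^2+T)}-\tfrac{a}{en}}^{m-a}$ via a Stirling estimate. The hypothesis $m\leq\frac{n^2+n}{T^2+T}$ is exactly what keeps the per-step bound positive throughout the greedy construction, which is the sanity check that the combinatorics closes.
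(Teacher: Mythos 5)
Your outer framework matches the paper's: realize each safe, $T$-limited, saturated $M$ with a pair $(P,\Lambda)$ via Proposition~\ref{prop:limited}, then extend $\Lambda$ greedily one $\lambda$ at a time, and multiply the per-step counts by $C(a,n,T)$. But the per-step count is where your proposal has a real gap, and the accounting you set up cannot close it.

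You count \emph{incidences} $(\lambda,j)$ with $P^{(j)}(\lambda)=0$, observe there are at most $n(n+1)/2$ of them, and then "divide by $T(T+1)/2$." This is internally inconsistent: if each $\lambda$ has at most $T$ ones in its row, it uses at most $T$ incidences, not $T(T+1)/2$, so dividing by $T(T+1)/2$ has no justification in your setup. Worse, $n(n+1)/2$ is only an \emph{upper} bound on the number of incidences (each $P^{(j)}$ has at most $n-j$ distinct roots), whereas the argument needs a \emph{lower} bound on the amount of "resource" available. There is no lower bound on the number of incidences, so nothing forces the pool of fresh $\lambda$'s to be large. The paper instead counts the multiplicity of each $\lambda$ as a root of the single polynomial $\prod_{j=0}^{n} P^{(j)}$, whose degree is \emph{exactly} $n(n+1)/2$ (since $\deg P = n$). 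If the row $D_P((\lambda))$ has at most $T$ ones, the multiplicity of $\lambda$ in this product equals $\sum_t \tfrac{t(t+1)}{2}$ over the lengths $t$ of runs of ones, and by convexity of $x(x+1)/2$ this is at most $T(T+1)/2$. Combining the exact degree count with this multiplicity bound gives at least $\lceil \tfrac{n^2+n}{T^2+T}\rceil$ distinct $\lambda$ with $D_P((\lambda)) \ne 0$, and after removing the $i$ already chosen, at least $\lceil \tfrac{n^2+n}{T^2+T}\rceil - i$ fresh choices remain. That exact-degree-plus-convexity step is the missing idea, and it is not equivalent to your incidence bookkeeping.

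Two smaller issues: the $\tfrac{a}{en}$ term comes from subtracting the $i$ previously chosen $\lambda$'s (starting at $i=a$), not from the $n$ ones already placed in $M$; and the $\tfrac{1}{n}$ factor requires a separate justification that you never give, namely that at most $n$ distinct values of $\lambda$ can produce the same nonzero row (since each $P^{(j)}$ has at most $n$ roots). With those two points clarified and the multiplicity-in-$\prod P^{(j)}$ argument replacing the incidence count, the Stirling step $\paren{k(k-1)\cdots(k+1-b)}^{1/b}\geq k/e$ finishes exactly as you intend.
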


\begin{proof}
	For each $a \times (n+1)$ \lek\ $T$-limited saturated matrix $M$, we construct many matrices in $\D_n^m$ whose top $a$ rows are $M$ as follows:
	\begin{enumerate}
		\item Pick an algebraically independent $a$-tuple of complex numbers $\lambda_1,\ldots,\lambda_a$. Pick $P$ as in Proposition \ref{prop:limited} so that $D_P((\lambda_1,\ldots,\lambda_a))=M$.
		\item For each $a+1 \leq i \leq m$, iteratively choose $\lambda_i$ to be any value not equal to any of $\lambda_1,\ldots,\lambda_{i-1}$ so that $D_P((\lambda_i))$ is nonzero.
	\end{enumerate}
	We will show that there must be many possible choices for each $\lambda_i$ in Step 2. We have that $\lambda$ is a root of the degree-$\paren{n(n+1)/2}$ polynomial $\prod_{j=0}^n P^{(j)}$ of multiplicity $\sum_t \frac{t(t+1)}{2}$, where the summation is over all lengths of runs of ones, with multiplicity, in $D_P((\lambda))$. Since $D_P((\lambda))$ has at most $T$ nonzero entries, and $\frac{x(x+1)}{2}$ is convex, $\lambda$ is a root of the degree-$\paren{n(n+1)/2}$ polynomial $\prod_{j=0}^n P^{(j)}$ of multiplicity at most $T(T+1)/2$. Note that $D_P((\lambda))$ is nonzero if and only if $\lambda$ is a root of $\prod_{j=0}^n P^{(j)}$. Hence, the number of possible choices for $\lambda_{i+1}$ is at least
	\[\ceil{\frac{n(n+1)/2}{T(T+1)/2}}-i = \ceil{\frac{n^2+n}{T^2+T}}-i.\]
	Since $P^{(j)}$ has at most $n$ roots for any $j$, there are at most $n$ possible values of $\lambda_{i+1}$ corresponding to the same not-all-zero row. Thus, given a choice of $\lambda_1,\ldots,\lambda_i,$ there are at least $\frac{1}{n}\paren{ \ceil{\frac{n^2+n}{T^2+T}}-i}$ possibilities for the $(i+1)$th row. For each $a \times (n+1)$ \lek\ $T$-limited saturated matrix, this construction gives at least
	\[\prod_{i=a}^{m-1} n^{-1}\paren{\ceil{\frac{n^2+n}{T^2+T}}-i}\]
	elements in $\D_n^m$. For any nonnegative integers $b<k$, we have
	\[\paren{k(k-1)\cdots(k+1-b)}^{1/b} \geq \paren{k(k-1)\cdots(1)}^{1/k} \geq \frac{k}{e},\]
	where the first inequality follows from the fact that the left-hand side is decreasing in $b$, and the second inequality is justified in Section \ref{sec:prelims}. Taking $k=\ceil{\frac{n^2+n}{T^2+T}}-a$ and $b=m-a$, we find that the number of elements in $\D_n^m$ is at least
	\[C(a,n,T)\prod_{i=a}^{m-1} n^{-1}\paren{\ceil{\frac{n^2+n}{T^2+T}}-i} \geq C(a,n,T) n^{a-m} \paren{\frac{\ceil{\frac{n^2+n}{T^2+T}}-a }{e} }^{m-a},\]
	which exceeds the bound required.
\end{proof}

To prove Proposition \ref{prop:mixedConstruct}, it remains to analyze the size of the bound in Lemma \ref{lem:grossbound}.

\begin{proposition}\label{prop:limitlowerbound}
	Suppose positive integers $a$ and $T$ with $T$ divisible by $3$ satisfy $T \leq n$ and $(a-2)T/3 > n$. Then we have
	\[C(a,n,T) \geq \frac{1}{n+1}\binom{an}{n}\paren{1-a(n+1)2^{-T/3}}.\]
\end{proposition}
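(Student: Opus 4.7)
I plan to bound the number of safe saturated matrices that fail to be $T$-limited and subtract this from the total count. Using the bijection from the proof of Theorem~\ref{thm:enumeration}, safe saturated $a \times (n+1)$ matrices correspond bijectively to $(a-1)$-dominating $\{0,1\}$-sequences of length $a(n+1)$ with $n$ ones, and the entries of row $i$ of the matrix are recorded by the sequence positions congruent to $i$ modulo $a$, a residue class of size $n+1$. Hence a matrix is $T$-limited exactly when each such class contains at most $T$ ones, and by the cycle lemma the total count of dominating sequences equals $\frac{1}{n+1}\binom{a(n+1)}{n} \geq \frac{1}{n+1}\binom{an}{n}$.

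The next step is to reduce to a hypergeometric tail estimate in two passes. First, the safe condition depends only on column sums, so the set of safe saturated matrices (equivalently, dominating sequences) is invariant under permutations of rows (equivalently, permutations of residue classes); a union bound over the $a$ classes then reduces bounding the fraction of dominating sequences with some class having more than $T$ ones to bounding the same fraction for class $1$ alone. Second, because dominating sequences form a $1/(n+1)$ fraction of all length-$a(n+1)$ sequences with $n$ ones, conditioning on ``dominating'' inflates any probability by at most a factor of $n+1$. Thus it suffices to prove the unconditioned bound $\Pr(Y_1 \geq T+1) \leq 2^{-T/3}$, where $Y_1$ is hypergeometric with parameters $(a(n+1), n+1, n)$ and mean $n/a$.

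For this tail estimate, I plan to analyze the explicit ratio
\[
\frac{\Pr(Y_1 = k+1)}{\Pr(Y_1 = k)} = \frac{(n+1-k)(n-k)}{(k+1)((a-2)n + a + k)}.
\]
Using the hypothesis $(a-2)T/3 > n$ and the bound $(n+1-k)(n-k) \leq n(n+1)$, this ratio is at most $1/3$ for every $k \geq T$, so $\Pr(Y_1 \geq T+1)$ is dominated by a geometric sum of total mass $\Pr(Y_1 = T)/2$. Combining this with an explicit estimate of $\Pr(Y_1 = T)$ coming from the hypergeometric formula and using the divisibility hypothesis $3 \mid T$ to clean up the resulting arithmetic yields the target $2^{-T/3}$ bound. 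Subtracting the resulting upper bound on non-$T$-limited dominating sequences from the total and using $\binom{a(n+1)}{n} \geq \binom{an}{n}$ then completes the proof.

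The main obstacle is the final tail estimate: standard Chernoff-type bounds for the hypergeometric give only $(e/3)^{T+1}$, which is slightly weaker than the target $2^{-T/3}$. Closing this gap requires pushing the ratio argument through a careful analysis of the bulk of the distribution below $k = T$ to sharply bound the peak mass, rather than invoking an off-the-shelf concentration inequality, and exploiting the precise arithmetic of the hypothesis.
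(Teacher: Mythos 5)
Your framework is essentially the paper's, translated from direct counting into probability language: a union bound over rows (or residue classes), followed by a geometric decay argument driven by the ratio of consecutive terms of a hypergeometric-type mass function. The paper works with the count $f(t)=\binom{n}{t}\binom{(a-1)n}{n-t}$ (ones in the first row) instead of $\Pr(Y_1=k)$, but the ratio analysis is the same; the symmetry under permuting rows and the inflation factor $n+1$ from restricting to dominating sequences are both correct.

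The gap you flag at the end is real, but the fix is simpler than sharpening a peak estimate of $\Pr(Y_1=T)$ and is exactly what the paper does: start the geometric decay at $k=2T/3$ rather than at $k=T$. For $k\geq 2T/3$ one already has, using $(n+1-k)(n-k)\leq n(n+1)$, the denominator bound $(k+1)\bigl((a-2)n+a+k\bigr)\geq(2T/3+1)(a-2)n$, and the hypothesis $(a-2)>3n/T$,
\[
\frac{\Pr(Y_1=k+1)}{\Pr(Y_1=k)}\leq\frac{n+1}{(2T/3+1)(a-2)}<\frac{(n+1)T}{(2T+3)n}\leq\frac{1}{2},
\]
the last step being $T\leq n$. (This is also why $3\mid T$ is assumed: so that $2T/3$ is an integer.) Then, with no peak estimate needed beyond the trivial $\Pr(Y_1=2T/3)\leq 1$,
\[
\Pr(Y_1\geq T+1)\leq\Pr(Y_1=2T/3)\sum_{j\geq T/3+1}2^{-j}=2^{-T/3}\Pr(Y_1=2T/3)\leq 2^{-T/3},
\]
which closes the argument. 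The paper's version of this step compares $f(T+i)$ to $f(2T/3)$ by the same ratio bound and then controls $f(2T/3)\leq\sum_t f(t)=\binom{an}{n}$ by Vandermonde; your $\Pr(Y_1=2T/3)\leq 1$ is the normalized analogue. The ratio bound $\leq 1/3$ for $k\geq T$ that you computed is correct but ends up unnecessary.
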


\begin{proof}
	The number of $a \times (n+1)$ \lek\ matrices with exactly $t$ ones in the first row and $n$ ones in total is at most
	\[f(t) \vcentcolon= \binom{n}{t} \binom{(a-1)n}{n-t}.\]
	 Here, the first term counts the number of ways to distribute $t$ ones into the first row and the second term counts the number of ways to distribute the remaining ones. Here, we are ignoring the condition given by $M$ being \lek, but use the fact that the last column cannot have ones.
	 
	 \
	
	We have, for all $t \leq n-1$,
	\[f(t+1) = f(t) \cdot \frac{n-t}{t+1} \cdot \frac{n-t}{(a-2)n+t} < f(t)\frac{n}{t(a-2)}\leq f(t) \frac{T/3}{t}.\]
	For $t \geq 2T/3$, the function $f$ decays by a factor of at least $2$ when $t$ increases by $1$. The number of $a \times (n+1)$ \lek\ $T$-limited saturated matrices is at least, by Corollary \ref{cor:genericbounds},
	\[\frac{1}{n+1}\binom{a(n+1)}{n}-a \paren{\sum_{t \geq T+1}f(t)},\]
	as the number of $a \times (n+1)$ $\{0,1\}$ matrices with at least $t$ ones in a given row is at most $f(t)$. Since $f(T+i) \leq 2^{-i-T/3}f(2T/3)$, we get
	\[C(a,n,T) \geq \frac{1}{n+1}\binom{a(n+1)}{n}-a2^{-T/3} f(2T/3).\]
	
	By Vandermonde's identity, we have
	
	\[\binom{a(n+1)}{n} \geq \binom{an}{n} =\sum_{i=0}^n f(i)\geq f(2T/3).\]
	Hence,
	\[C(a,n,T) \geq \frac{1}{n+1}\binom{a(n+1)}{n}-a2^{-T/3} f(2T/3)\geq \frac{1}{n+1}\binom{a(n+1)}{n}\paren{1-a(n+1)2^{-T/3}},\]
	which exceeds the desired bound.
\end{proof}

We are now ready to prove Proposition \ref{prop:mixedConstruct}. 

\begin{proof}[Proof of Proposition \ref{prop:mixedConstruct}]
For $\frac{n}{(\log n)^2} \geq m$, use the lower bound $\abs{\D_n^m} \geq \abs{\D_n^{\gen(m)}}$ to get
\[\log \abs{\D_n^m} \geq \log \abs{\D_n^{\gen(m)}} \geq \log \paren{\frac{1}{n+1}\binom{m(n+1)}{n}} =(1+o(1))\log \paren{n^m\binom{mn}{n}},\]
where the second inequality follows from Corollary \ref{cor:genericbounds} and the last estimate of the error term follows from bounds $\log (n^m) \leq n/\log n=o(n)$ and $\log \binom{mn}{n} \geq \log \binom{2n}{n}=\Theta(n)$ .

\

We use Lemma \ref{lem:grossbound} for the remaining regions. For $m \geq  n\sqrt{\log n}$, take $a=n$ and $T=1$, so $C(a,n,T) \geq 1$. We have assumed $m \leq \frac{n^2+n}{2} =  \frac{n^2+n}{T^2+T}$, so we may apply Lemma \ref{lem:grossbound}, which gives
\[\log \abs{\D_n^m} \geq (m-n) \log \paren{\frac{n+1}{2e}-\frac{1}{2}} = (1+o(1))m \log n= (1+o(1))\log (n^m).\]
This is equal to $(1+o(1))\log \paren{n^{m}\binom{mn}{n}}$ because we can estimate $\log \binom{mn}{n}$ by
 \[\log \binom{mn}{n} \leq \log \paren{(mn)^n} \leq 3n \log n = o(m\sqrt{\log n}).\]

Finally, for $\frac{n}{(\log n)^2} \leq m \leq n \sqrt{\log n}$, take $T/3 = 10 \ceil{\log n}^3$ and $a = \ceil{\frac{m}{T/3}}+3$. For large enough $n$, we have $m \leq \frac{n^2+n}{T^2+T}$, so we can apply Lemma \ref{lem:grossbound}. For large enough $n$, we have:
\[\frac{n+1}{e(T^2+T)}-\frac{a}{en} \geq \frac{n}{100T^2}\quad\text{and}\quad  m-a \geq m\paren{1-\frac{1}{\log n}},\]
allowing us to estimate that, using Proposition \ref{prop:limitlowerbound}, $\log \paren{\frac{\abs{\D_n^m}}{C(a,n,T)}} \geq (1+o(1)) \log \paren{n^m}$. For sufficiently large $n$, we also have
\[\binom{an}{n} \geq (\log n)^{-100 n}\binom{mn}{n} \quad\text{and}\quad 1-a(n+1)2^{-T/3}\geq \frac{1}{2},\]
which will allow us to estimate $\log C(a,n,t) \geq (1+o(1))\log \paren{\binom{mn}{n}}$. For $n$ large enough for the aformentioned inequalities to hold, putting everything together gives
\[\abs{\D_n^m}\geq \frac{1}{2(n+1)}(\log n)^{-100 n}\binom{mn}{n} \cdot \paren{\frac{n}{100 T^2}}^{m\paren{1-\frac{1}{\log n}}},\]
which is $\paren{\binom{mn}{n} n^m}^{1+o(1)}$, as desired.
\end{proof}

\section{$\D_n^m$ for Large $m$}

The goal of this section is to prove Theorem \ref{thm:largeasymptotics}.

\

Let $V(m,n)$ denote the number of elements in $\D_n^m$ with no all-zero rows. As $P^{(j)}$ has at most $n-j$ roots for each $j$, the $j$th column of any element of $\D_n^m$ has at most $n-j$ ones, and so $V(m,n)=0$ for all $m > \frac{n^2+n}{2}$.

\begin{proposition}\label{prop:polynom}
	For all $m,n$, we have
	\[\abs{\D_n^m} = \sum_{k=0}^{\frac{n^2+n}{2}} \binom{m}{k}V(k,n).\]
\end{proposition}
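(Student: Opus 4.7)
The plan is to partition $\D_n^m$ according to the set of positions where the row is not all zero. Given any $D \in \D_n^m$, let $S(D) \subseteq [m]$ denote the set of indices $i$ for which row $i$ of $D$ contains at least one $1$, and let $k = |S(D)|$. Since the $j$th column of any element of $\D_n^m$ has at most $n-j$ ones, the total number of ones is at most $\sum_{j=0}^n (n-j) = \tfrac{n^2+n}{2}$, and hence $k$ ranges only over $0 \le k \le \tfrac{n^2+n}{2}$. Summed over $k$, it therefore suffices to show that for every $k$-subset $S \subseteq [m]$, the number of $D \in \D_n^m$ with $S(D) = S$ equals $V(k,n)$; then the total count is $\sum_{k=0}^{(n^2+n)/2} \binom{m}{k} V(k,n)$ as desired.

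To establish this equality, I will exhibit a bijection between $\{D \in \D_n^m : S(D) = S\}$ and the set of $k \times (n+1)$ dope matrices with no all-zero rows. Write $S = \{i_1 < \cdots < i_k\}$. In the forward direction, take any $(P,\Lambda)$ realizing $D$, i.e., $D_P(\Lambda) = D$, and send $D$ to the submatrix obtained by deleting rows in $[m] \setminus S$; this submatrix equals $D_P((\lambda_{i_1}, \ldots, \lambda_{i_k}))$, so it lies in $\D_n^k$, and it has no all-zero rows by definition of $S(D)$. The map is well defined on matrices (independent of the choice of realizing $(P,\Lambda)$) and is clearly injective, since the inserted rows of zeros carry no information.

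For the inverse direction I need the key extension step: given $D' = D_P(\Lambda') \in \D_n^k$ with no all-zero rows, where $\Lambda' = (\lambda'_1, \ldots, \lambda'_k)$, produce an $m$-tuple $\Lambda$ of distinct complex numbers such that $D_P(\Lambda)$ is the matrix obtained from $D'$ by inserting all-zero rows at positions $[m] \setminus S$. I place $\lambda'_j$ in position $i_j$ for each $j$, and for each $i \in [m] \setminus S$ I pick $\mu_i \in \CC$ that is (i) distinct from all previously chosen values, and (ii) not a root of $P^{(t)}$ for any $t \in [0,n]$. Condition (ii) excludes only the finite root set of the polynomial $\prod_{t=0}^{n} P^{(t)}$, and condition (i) excludes only finitely many previously chosen values, so such a $\mu_i$ exists since $\CC$ is infinite. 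By construction the $i$th row of $D_P(\Lambda)$ for $i \notin S$ is all zero, while for $i = i_j \in S$ it agrees with the $j$th row of $D'$. Hence $D_P(\Lambda)$ lies in the target fiber, and this extension inverts restriction. No step here presents a real obstacle; the only point to be careful about is verifying that the choice of $\mu_i$'s only needs to avoid a finite set, which is immediate.
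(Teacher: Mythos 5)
Your proposal is correct and takes essentially the same approach as the paper, which conditions on the number of nonzero rows and asserts the fiber count is $V(k,n)$. You additionally fill in the verification that padding a $k$-row dope matrix with all-zero rows remains dope (by choosing the new $\lambda$'s off the finite root set of $\prod_{t=0}^n P^{(t)}$), a step the paper leaves implicit.
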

\begin{proof}
	The right-hand side counts $\abs{\D_n^m}$ by conditioning on the number $k$ of not-all-zero rows, since the number of possible submatrices consisting of only those $k$ rows is $V(k,n)$.
\end{proof}

We can now conclude the first part of Theorem \ref{thm:largeasymptotics}.

\begin{corollary}\label{cor:bigmAsymptotics}
	We have, for $m > \frac{n^2+n}{2}$,
	\[\max\paren{\binom{m}{\frac{n^2+n}{2}}V\paren{\frac{n^2+n}{2},n},\ \abs{\D_n^{\frac{n^2+n}{2}}}} \leq \abs{\D_n^m} \leq \binom{m}{\frac{n^2+n}{2}}\abs{\D_n^{\frac{n^2+n}{2}}}.\] Hence, if $m(t),n(t)\fromto{\ZZ_{>0}}{\ZZ_{>0}}$ satisfy $m(t),n(t) \to \infty$, then:
	\begin{enumerate}
		\item[(a)] if $m = \Theta(n^2)$ and $m>\frac{n^2+n}{2}$, then $\log \abs{\D_n^m} \sim \log \abs{\D_n^{\frac{n^2+n}{2}}}$;
		\item[(b)] if $\log m = \omega(\log n)$, then $\log \abs{\D_n^m} \sim \log \paren{\dbinom{m}{\frac{n^2+n}{2}}} $.
	\end{enumerate}
\end{corollary}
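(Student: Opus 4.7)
The plan is to derive the sandwich inequality directly from Proposition \ref{prop:polynom} and then apply the bounds of Propositions \ref{prop:upperBound} and \ref{prop:mixedConstruct} to deduce the two asymptotic statements.

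Writing $N = \frac{n^2+n}{2}$ for brevity, Proposition \ref{prop:polynom} gives $\abs{\D_n^m} = \sum_{k=0}^{N} \binom{m}{k} V(k,n)$. Keeping only the $k = N$ term yields $\binom{m}{N} V(N,n) \leq \abs{\D_n^m}$, and the termwise comparison $\binom{m}{k} \geq \binom{N}{k}$ (valid for $k \leq N \leq m$) yields $\abs{\D_n^N} \leq \abs{\D_n^m}$. The upper bound is the crucial step: the identity $\binom{m}{N}\binom{N}{k}/\binom{m}{k} = \binom{m-k}{N-k}$, which is at least $1$ whenever $m \geq N \geq k$, shows $\binom{m}{k} \leq \binom{m}{N}\binom{N}{k}$ for each $k \leq N$, and summing against $V(k,n)$ produces $\abs{\D_n^m} \leq \binom{m}{N}\abs{\D_n^N}$.

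For part (a), the sandwich reduces the claim $\log \abs{\D_n^m} \sim \log \abs{\D_n^N}$ to the bound $\log \binom{m}{N} = o(\log \abs{\D_n^N})$. When $m = \Theta(n^2)$, the trivial estimate $\binom{m}{N} \leq 2^m$ gives $\log \binom{m}{N} = O(n^2)$. The lower bound on $\log \abs{\D_n^N}$ comes from Proposition \ref{prop:mixedConstruct} applied with $m = N$: it yields $\log \abs{\D_n^N} \geq (1+o(1))\log\paren{n^N \binom{Nn}{n}} \geq (1+o(1))N \log n \sim (n^2/2)\log n$, which dominates $O(n^2)$ as required.

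For part (b), the sandwich rewrites as $\log \abs{\D_n^m} = \log \binom{m}{N} + O\paren{\log \abs{\D_n^N}}$, so it suffices to show $\log \abs{\D_n^N} = o\paren{\log \binom{m}{N}}$. Proposition \ref{prop:upperBound} applied at $m = N$ gives $\log \abs{\D_n^N} = O(n^2 \log n)$. On the other hand, Stirling yields $\log \binom{m}{N} \geq (1-o(1))N\log(m/N)$, and the hypothesis $\log m = \omega(\log n)$ (combined with $\log N = \Theta(\log n)$) forces $\log(m/N) = (1-o(1))\log m$, so $\log \binom{m}{N} \geq (1-o(1))N \log m = \omega(n^2 \log n)$, yielding $\log \abs{\D_n^m} \sim \log \binom{m}{N}$. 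The main obstacle is identifying the clean sandwich, which the binomial-coefficient identity $\binom{m-k}{N-k} \geq 1$ makes transparent; after that, both asymptotic statements reduce to routine applications of the Section 3 bounds.
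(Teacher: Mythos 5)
Your proof is correct and follows essentially the same approach as the paper: the sandwich inequality is derived from Proposition \ref{prop:polynom} via the term $k = \frac{n^2+n}{2}$ and the binomial inequality $\binom{m}{N}\binom{N}{k}\geq\binom{m}{k}$, and the asymptotics follow by comparing $\log\binom{m}{N}$ with $\log\abs{\D_n^{N}}$ using the Section 3 bounds. The paper leaves the asymptotic deductions as ``follow directly from the inequalities,'' which you have correctly filled in.
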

\begin{proof}
	The lower bound of $\binom{m}{\frac{n^2+n}{2}}V\paren{\frac{n^2+n}{2}}$ follows by taking the last term in Proposition \ref{prop:polynom}, and the lower bound of $\abs{\D_n^{\frac{n^2+n}{2}}}$ follows from noting that $\abs{\D_n^m}$ is an increasing function of $m$. The upper bound follows from applying the inequality \[\dbinom{m}{\frac{n^2+n}{2}} \cdot \dbinom{\frac{n^2+n}{2}}{k} \geq \dbinom{m}{k}\]
	after expanding both sides by Proposition \ref{prop:polynom}. The asymptotic estimates follow directly from the inequalities.
\end{proof}

Proposition \ref{prop:polynom} also tells us that for fixed $n$, $\abs{\D_n^m}$ is a polynomial in $m$ of degree $\frac{n^2+n}{2}$. We now compute the value of $V\paren{\frac{n^2+n}{2}-t,n}$ for $t \in \{0,1\}$, giving us the leading terms of this polynomial. We will first need the following strengthening of Proposition \ref{prop:limited}.

\begin{proposition}\label{prop:strongerlimited}
	Let $\Lambda=(\lambda_1,\ldots,\lambda_a)$ be affinely algebraically independent, and let $M$ be an $a \times (n+1)$ \lek\ matrix. Note that $M$ is not necessarily saturated.
	
	\
	
	Let $T$ be a positive integer such that at most one of the rows of $M$ has more than $T$ ones. Then there is a degree-$n$ polynomial $P$ with the property that $D_P(\Lambda)=M$, and the property that for any $\lambda \in \CC \setminus \Lambda$, at most $T$ of the entries of $D_P((\lambda))$ are one.
\end{proposition}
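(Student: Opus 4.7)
The plan is to extend $M$ and $\Lambda$ to a saturated configuration and then apply the row-swap strategy of Proposition \ref{prop:limited}, being careful to avoid swapping out the anomalous row of $M$ with more than $T$ ones (if such a row exists). Let $r$ denote this row if it exists (there is at most one by hypothesis), and let $s$ be the number of ones in $M$, so $s \leq n$ by the \lek\ property. Extend $\Lambda$ to $\tilde\Lambda = (\lambda_1, \ldots, \lambda_a, \lambda_{a+1}, \ldots, \lambda_{a+n-s})$ by appending $n-s$ fresh complex numbers so that $\tilde\Lambda$ is affinely algebraically independent (e.g.\ transcendentals algebraically independent from $\Lambda$), and extend $M$ to $\tilde M$ by appending $n-s$ new rows, each consisting of a single $1$ in column $0$. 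Then $\tilde M$ is \lek\ (the last $k+1$ columns are unchanged from $M$ for $k < n$, and for $k = n$ the total is exactly $n$), saturated, and every row except possibly row $r$ has at most $T$ ones.

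By Theorem \ref{thm:genclass} there exists a degree-$n$ polynomial $P$ with $D_P(\tilde\Lambda) = \tilde M$; restricting to the first $a$ rows yields $D_P(\Lambda) = M$. It remains to bound the number of ones in $D_P((\lambda))$ for $\lambda \in \CC \setminus \Lambda$. If $\lambda$ equals one of the appended $\lambda_{a+i}$, then $D_P((\lambda))$ is a row of $\tilde M$ containing exactly one $1$, which is at most $T$ since $T \geq 1$. Otherwise $\lambda \notin \tilde\Lambda$, and I imitate the swap argument of Proposition \ref{prop:limited}: find an index $b$ such that $\tilde\Lambda_b$, the tuple obtained from $\tilde\Lambda$ by replacing its $b$-th entry with $\lambda$, is still affinely algebraically independent. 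Applying Theorem \ref{thm:genclass} to $\tilde\Lambda_b$ gives that $D_P(\tilde\Lambda_b)$ is \lek, and so has at most $n$ ones in total; since the rows of $D_P(\tilde\Lambda_b)$ other than row $b$ agree with those of $\tilde M$ and together contribute $n - t_b$ ones (where $t_b$ is the number of ones in row $b$ of $\tilde M$), row $b$, which equals $D_P((\lambda))$, has at most $t_b$ ones.

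The main obstacle is guaranteeing that the swap index $b$ can be chosen with $b \neq r$, so that $t_b \leq T$. If $\tilde\Lambda \cup \{\lambda\}$ is already affinely algebraically independent or if $r$ does not exist, any appropriate $b$ works. Otherwise let $Q_0$ be a minimum-degree affine algebraic dependence of $(\lambda, \lambda_1, \ldots, \lambda_{a+n-s})$; as in the proof of Proposition \ref{prop:limited}, $Q_0$ divides every such dependence, and any index $b \geq 1$ for which $x_b$ appears in $Q_0$ yields an affinely algebraically independent $\tilde\Lambda_b$. Some such $b \geq 1$ exists, since otherwise $Q_0 \in \QQ[x_0]$ would be a polynomial with $Q_0(t_1 + t_2 \lambda) = 0$ for all $t_1, t_2 \in \CC$, forcing $Q_0 \equiv 0$; and if $r$ were the only such index then $Q_0$ would be a nontrivial affine algebraic dependence of the $2$-tuple $(\lambda, \lambda_r)$. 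But $\lambda \neq \lambda_r$ since $\lambda \notin \Lambda$, and any two distinct complex numbers form an affinely algebraically independent $2$-tuple, a contradiction. This produces the desired $b \neq r$ and completes the proof.
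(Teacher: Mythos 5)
Your proof is correct and takes the same route as the paper: saturate by appending single-one rows, get $P$ from Theorem \ref{thm:genclass}, and reuse the swap argument of Proposition \ref{prop:limited}, with the key point being that the minimal-degree dependence $Q_0$ cannot lie in $\QQ[x_0]$ or in $\QQ[x_0,x_r]$ alone (both ruled out by the two-variable substitution), so some swap index $b\neq r$ exists. The paper packages this last step as ``$Q_0$ involves at least three variables, hence at least two valid swap indices, at least one of which is not $r$,'' while you argue directly for a $b\neq r$; the content is identical.
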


\begin{proof}
	Append rows of the form $(1,0,\ldots,0)$ to $M$ until it is saturated. Let $M'$ denote the resulting matrix, and let $a'$ be the number of rows of $M'$. Note that $M'$ is \lek. Pick complex numbers $\lambda_{a+1},\ldots,\lambda_{a'}$ such that $\Lambda'=(\lambda_1,\ldots,\lambda_{a'})$ is algebraically independent.
	
	\	
	
	Now, we apply the argument of Proposition \ref{prop:limited} to $M'$ and $\Lambda'$. Consider some $\lambda \in \CC \setminus \Lambda$. We claim that there exist at least two $b \in [a']$ such that the $a'$-tuple $\Lambda_b'$ obtained from $\Lambda'$ by replacing $\lambda_b$ with $\lambda$ is affinely algebraically independent. If $\Lambda'$ with $\lambda$ appended is already affinely algebraically independent, the result is clear.
	
	\
	
	Otherwise, let $Q_0$ be a minimal-degree affine algebraic dependence of $(\lambda,\lambda_1,\ldots,\lambda_{a'})$. As in the proof of Proposition \ref{prop:limited}, $Q_0$ must divide all other affine algebraic dependences. We claim that there are at least $3$ variables with a nonzero coefficient in $Q_0$.
	
	\
	
	Suppose otherwise. Then, there is some nonzero two-variable polynomial $Q$ and distinct constants $\alpha_1,\alpha_2 \in \CC$ such that $Q(a+b\alpha_1,a+b\alpha_2)=0$ for all $a,b \in \CC$. However, substituting $a=\frac{-\alpha_2 x + \alpha_1 y}{\alpha_1-\alpha_2}$ and $b=\frac{x-y}{\alpha_1-\alpha_2}$ implies $Q(x,y) = 0$ for all $x,y \in \CC$, so $Q$ is identically zero, which is a contradiction, as desired.
	
	\
	
	Now, there are at least two values of $b$ such that $x_b$ has nonzero coefficient in $Q_0$. For these values of $b$, we have that $\Lambda'_b$ is affinely algebraically independent. At least one of these choices of $b$ must have the property that the $b$th column of $M'$ has at most $T$ nonzero entries, implying the desired result.
\end{proof}

\begin{proposition}\label{prop:vvalues}
	We have the following values of $V(m,n)$:
	\begin{align*}
		V\paren{\frac{n^2+n}{2},n} &= \frac{\paren{\frac{n^2+n}{2}}!}{1!2!\cdots n!}\quad\text{and}\quad\\
		V\paren{\frac{n^2+n}{2},n-1} &= \frac{\paren{\frac{n^2+n}{2}}!}{1!2!\cdots n!}\paren{1+\frac{(n-1)(n-2)}{4}}.\\
	\end{align*}
\end{proposition}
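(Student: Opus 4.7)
Let $N=\tfrac{n^2+n}{2}$. I read the second displayed formula as $V(N-1,n)$, matching both the sentence ``We now compute the value of $V\paren{\frac{n^2+n}{2}-t,n}$ for $t\in\{0,1\}$'' and the coefficient of $\binom{m}{N-1}$ in Theorem \ref{thm:largeasymptotics}; taken literally as $V(N,n-1)$, the column cap for degree-$(n-1)$ polynomials ($b_j\leq n-1-j$) forces the total $1$-count to be at most $\binom{n}{2}<N$, so $V(N,n-1)=0$ cannot equal the stated nonzero right-hand side.

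The plan is to stratify matrices in $V(m,n)$ by total number of $1$s. The cap $b_j\leq n-j$ (with $b_n=0$) bounds the total above by $N$, and the no-all-zero-row condition bounds it below by $m$. For $V(N,n)$ the total is exactly $N$, so each row is a singleton and column multiplicities are forced to $(n,n-1,\ldots,1,0)$, giving the multinomial count $\tfrac{N!}{1!\,2!\cdots n!}$; every such matrix is realized by taking any degree-$n$ polynomial whose derivatives $P,P',\ldots,P^{(n-1)}$ each have distinct simple roots pairwise disjoint across derivatives (Zariski-generic). For $V(N-1,n)$ the total is $N-1$ (Case A: singletons, one column $j^\star$ sitting one below its cap) or $N$ (Case B: caps saturated, with one ``doubled'' row whose two $1$s lie at columns $j_1<j_2$). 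In Case A the multinomial for fixed $j^\star$ is $\tfrac{(N-1)!\,(n-j^\star)}{1!\cdots n!}$, and summing $\sum_{j^\star=0}^{n-1}(n-j^\star)=N$ produces $\tfrac{N!}{1!\cdots n!}$, the ``$1$'' in the parenthetical factor. In Case B, column $j_1$ still carries $n-j_1$ distinct simple roots of $P^{(j_1)}$, so the special point $\lambda$ of the doubled row satisfies $P^{(j_1+1)}(\lambda)\neq 0$, forcing column $j_1+1$ of that row to be $0$ and hence $j_2\geq j_1+2$; for each such pair the count is $\tfrac{(N-1)!\,(n-j_1)(n-j_2)}{1!\cdots n!}$ by choosing the doubled row and distributing the remaining $N-2$ singletons.

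The main obstacle is realizability in Case B. Expanding $P(x)=\sum_{k=0}^n c_k(x-\lambda)^k$ gives $P^{(j)}(\lambda)=j!\,c_j$, so I impose $c_{j_1}=c_{j_2}=0$, $c_n\neq 0$, and $c_k\neq 0$ for the other $k$, cutting out a codimension-two slice of coefficient space. On this slice I want to show that for a Zariski-generic choice of the remaining coefficients each $P^{(j)}$ has $n-j$ distinct simple roots, and the only coincidence across distinct derivatives is the prescribed one at $\lambda$. Each bad condition (some $P^{(i)}$ has a double root, or some pair $P^{(i)},P^{(j)}$ shares a root other than $\lambda$ -- including the case $(i,j)=(j_1,j_2)$ with a second common root) is a polynomial constraint on the free coefficients, so it cuts out either all of the slice or a proper subvariety; to rule out the former, one exhibits a single polynomial on the slice violating each bad condition (taking generic perturbations within $\mathrm{span}\{(x-\lambda)^k : k\notin\{j_1,j_2\}\}$ suffices, and the small-$n$ base cases can be written down explicitly, e.g.\ $x^n+c_1x$ for $n=3$, $(j_1,j_2)=(0,2)$). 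After realizability, the Case B total becomes $\tfrac{(N-1)!}{1!\cdots n!}\sum_{(j_1,j_2)}(n-j_1)(n-j_2)$ summed over valid pairs, and I verify
\[\sum_{\substack{0\leq j_1<j_2\leq n-1\\j_2-j_1\geq 2}}(n-j_1)(n-j_2)=\tfrac{N(n-1)(n-2)}{4}\]
by expanding $\tfrac12\bigl[\bigl(\sum_{j=0}^{n-1}(n-j)\bigr)^2-\sum_{j=0}^{n-1}(n-j)^2\bigr]$ and subtracting the adjacent contribution $\sum_{j=0}^{n-2}(n-j)(n-j-1)$. Dividing by $N$ gives $(n-1)(n-2)/4$, which combined with Case A yields the parenthesized factor $1+\tfrac{(n-1)(n-2)}{4}$.
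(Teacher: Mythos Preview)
Your decomposition and counting match the paper's exactly: same case split (total $1$-count $N-1$ versus $N$), same non-adjacency constraint $j_2\ge j_1+2$ for the doubled row derived the same way, same multinomial counts, and your sum identity for Case~B checks out. Your reading of the second formula as $V(N-1,n)$ is correct as well.

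The one gap is realizability in Case~B. You correctly note that each bad condition (a discriminant of some $P^{(j)}$, or a resultant controlling an unwanted shared root) is a polynomial constraint on the free coefficients $c_k$, hence defines either the whole slice $\{c_{j_1}=c_{j_2}=0\}$ or a proper subvariety of it. But the clause ``taking generic perturbations within $\mathrm{span}\{(x-\lambda)^k:k\notin\{j_1,j_2\}\}$ suffices'' is circular: showing that a generic point of the slice avoids a given bad locus \emph{is} showing that locus is proper, which is precisely the step requiring a witness. To close the argument you would have to produce, for each of the $O(n^2)$ discriminant and resultant conditions and uniformly in $(n,j_1,j_2)$, an explicit $P$ on the slice violating it---for instance, verifying that the resultant of $P^{(j_1)}/(x-\lambda)$ and $P^{(j_2)}/(x-\lambda)$ does not vanish identically on the slice. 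A single $n=3$ example does not cover this. The paper handles realizability instead via Proposition~\ref{prop:strongerlimited}, applied with $T=1$, $\Lambda=(0)$, and the one-row matrix with ones at columns $j_1,j_2$: that proposition saturates the matrix with singleton rows, extends $\Lambda$ to an affinely algebraically independent tuple $\Lambda'$, and then argues (using the minimal affine algebraic dependence) that for any $\lambda\notin\Lambda$ one can swap $\lambda$ in for some $\lambda_b$ whose row has at most $T=1$ ones while keeping the tuple affinely algebraically independent. Theorem~\ref{thm:genclass} then forces $D_P((\lambda))$ to have at most one $1$, which furnishes all the needed witnesses at once.
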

\begin{proof}
	For the first formula, note that if an element of $\D_n^{\frac{n^2+n}{2}}$ has no all-zero rows, then each row must have exactly one nonzero entry and the $j$th column must have exactly $n+1-j$ ones. We show that all matrices satisfying the aforementioned conditions are in $\D_n^{\frac{n^2+n}{2}}$. This gives the desired enumeration, as we are then placing $n+1-j$ indistinguishable rows for each $j \in [n]$.
	
	\
	
	To show that all such matrices are attainable, consider the polynomial $(x-\lambda_1)\cdots(x-\lambda_n)$, where $\lambda_1,\ldots,\lambda_n$ are affinely algebraically independent. By Proposition \ref{prop:limited} on $P=(x-\lambda_1)\cdots(x-\lambda_n)$, $\Lambda =  (\lambda_1,\ldots,\lambda_n)$, and $T=1$, all of the roots of the derivatives of the $P^{(j)}$ are distinct, implying the desired result.
	
	\
	
	For the second formula, note that if an element of $\D_n^{\frac{n^2+n}{2}-1}$ has all not-all-zero rows, then it must be one of the following:
	\begin{enumerate}
		\item[(a)] an element of $\D_n^{\frac{n^2+n}{2}}$ with no all-zero rows, and with the bottom row removed;
		\item[(b)] a matrix where the $j$th column has exactly $n+1-j$ ones, some row has a nonzero entry in exactly two columns (say $n+1-j$ and $n+1-j'$ with $j'>j+1$), and the remaining rows have exactly one nonzero entry each.
	\end{enumerate}
	The $j'>j$ condition can be assumed by symmetry; the $j' \neq j+1$ condition arises since $P^{(n-j')}(t)=P^{(n-j'-1)}(t)=0$ would imply that $t$ is a multiplicity $2$ root of $P^{(n-j')}$, giving $P^{(n-j')}$ more that $j'$ roots counted with multiplicity.
	
	\
	
	We show that such matrices are attainable. The result is clear for matrices in (a). For matrices in (b), applying Proposition \ref{prop:strongerlimited} to the value $T=1$, the $1$-tuple $\Lambda=(0)$, and the one-row matrix with ones in columns $n+1-j$ and $n+1-j'$, we get a suitable polynomial.
	
	\
	
	Now, we count this set of matrices. There are $V\paren{\frac{n^2+n}{2},n}$ matrices in (a), since the bottom row is uniquely determined by the other rows. The number of matrices in (b) is, conditioning on $j$ and $j'$,

	\begin{align*}
	\sum_{2 \leq j+1 < j'\leq n }\frac{\paren{\frac{n^2+n}{2}-1}!}{1!2!\cdots n!} \cdot jj'&=\frac{\paren{\frac{n^2+n}{2}-1}!}{1!2!\cdots n!} \cdot \sum_{j'=3}^nj' \sum_{j < j'-1}j\\
	&=\frac{\paren{\frac{n^2+n}{2}-1}!}{1!2!\cdots n!} \cdot \sum_{j'=3}^n\frac{j'(j'-1)(j'-2)}{2}\\
	&=\frac{\paren{\frac{n^2+n}{2}-1}!}{1!2!\cdots n!} \cdot \frac{(n+1)n(n-1)(n-2)}{8},\\
	\end{align*}
	giving the desired formula.
\end{proof}

The second part of Theorem \ref{thm:largeasymptotics} directly follows.

\begin{corollary}\label{cor:fixednAsymptotics}
	When $n$ is fixed, we have
	\[\abs{\D_n^m} = \frac{\paren{\frac{n^2+n}{2}}!}{1!2!\cdots n!}\binom{m}{\frac{n^2+n}{2}}+\frac{\paren{\frac{n^2+n}{2}}!}{1!2!\cdots n!}\paren{1+\frac{(n-1)(n-2)}{4}}\binom{m}{\frac{n^2+n}{2}-1}+O\paren{m^{\frac{n^2+n}{2}-2}}.\]
\end{corollary}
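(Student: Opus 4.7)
The plan is to combine Proposition \ref{prop:polynom} with the explicit values from Proposition \ref{prop:vvalues}. By Proposition \ref{prop:polynom}, for any $m$ and any fixed $n$ we have
\[\abs{\D_n^m} = \sum_{k=0}^{(n^2+n)/2} \binom{m}{k}\, V(k,n),\]
and since each $V(k,n)$ depends only on $n$, this expresses $\abs{\D_n^m}$ as a polynomial in $m$ of degree $\tfrac{n^2+n}{2}$ written in the binomial-coefficient basis $\{\binom{m}{k}\}_{k=0}^{(n^2+n)/2}$.

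The key observation is that the coefficients in that basis can simply be read off: the coefficient of $\binom{m}{(n^2+n)/2}$ is $V\!\paren{\tfrac{n^2+n}{2},n}$ and the coefficient of $\binom{m}{(n^2+n)/2-1}$ is $V\!\paren{\tfrac{n^2+n}{2}-1,n}$, both of which are computed in Proposition \ref{prop:vvalues} (this gives the two explicit terms appearing in the statement of the corollary, matching those of Theorem \ref{thm:largeasymptotics}).

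It then remains only to bound the tail $\sum_{k=0}^{(n^2+n)/2-2} \binom{m}{k} V(k,n)$. For fixed $n$ each $\binom{m}{k}$ is a polynomial in $m$ of degree $k \leq \tfrac{n^2+n}{2}-2$, and there are only finitely many terms, each with a constant (in $m$) coefficient $V(k,n)$. Hence the tail is $O\!\paren{m^{(n^2+n)/2-2}}$, producing the claimed error term.

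There is no real obstacle here: the heavy lifting has been done in Proposition \ref{prop:polynom} (which gives the polynomial expansion) and Proposition \ref{prop:vvalues} (which computes the two leading $V$-values, with the second formula being $V\!\paren{\tfrac{n^2+n}{2}-1,n}$ as evidenced by its proof, which analyzes matrices in $\D_n^{(n^2+n)/2-1}$ with no all-zero rows). The corollary is essentially a line of bookkeeping once those ingredients are in place.
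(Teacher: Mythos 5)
Your proof is correct and follows exactly the paper's (implicit) argument: expand $\abs{\D_n^m}$ via Proposition \ref{prop:polynom}, substitute the two leading values of $V(\cdot,n)$ from Proposition \ref{prop:vvalues}, and absorb the finitely many lower-degree terms into $O\paren{m^{(n^2+n)/2-2}}$. You also correctly caught that the second formula in Proposition \ref{prop:vvalues} is a typo for $V\paren{\tfrac{n^2+n}{2}-1,\,n}$, as its proof (which works with $\D_n^{(n^2+n)/2-1}$) makes clear.
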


\section{Future Work}

In Theorem \ref{thm:asymptotics}, we believe the lower bound is the correct asymptotic formula for $\abs{\D_n^m}$.

\begin{conjecture}
	For $m(t),n(t)\fromto{\ZZ_{>0}}{\ZZ_{>0}}$ satisfying $n(t) \to \infty$ and $1<m(t) \leq \frac{n(t)^2+n(t)}{2}$, we have \[ \log \abs{\D_n^m} \sim \log \paren{n^m\binom{mn}{n}}.\]
\end{conjecture}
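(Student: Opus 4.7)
The conjecture asserts that the lower bound in Theorem \ref{thm:asymptotics} is asymptotically tight, so the goal is to improve the upper bound of Proposition \ref{prop:upperBound} from $n^{2m}\binom{mn}{n}$ to $n^m\binom{mn}{n}$ inside the logarithm. The excess factor of $n^m$ in the current proof comes from the uniform estimate that every $\lambda_i$ has degree up to $\frac{n^2+n}{2}$ in every $Q_k=\prod_{(i,j)\notin S_k}P^{(j)}(\lambda_i)$, producing the monomial bound $\paren{\frac{n^2+n+2}{2}}^m\binom{mn}{n}$. The task is to shave one factor of $n$ per row.

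My plan is to refine the R\'{o}nyai--Babai--Ganapathy-style argument by stratifying zero-patterns by their row profile $(s_1,\ldots,s_m)$, where $s_i$ is the number of ones in row $i$ of $S_k$. Since $P^{(j)}(\lambda_i)$ has $\lambda_i$-degree exactly $n-j$, the $\lambda_i$-degree of $Q_k$ sharpens to $\frac{n(n+1)}{2}-\sum_{j\in S_k^{(i)}}(n-j)$, which is substantial savings whenever row $i$ has multiple ones, especially at small $j$. Within each row-profile stratum I would bound the dimension of the span of the corresponding $Q_k$'s by refined monomial counting, then sum over profiles; the number of profiles is subexponential in $m$ and $n$ and so should contribute only a subpolynomial factor to $\log\abs{\D_n^m}$.

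The troublesome cases are rows with $s_i=0$ and $s_i=1$, where the refined degree bound barely shrinks. For $s_i=0$ rows, I would further stratify by the empty-row set $E\subseteq[m]$ and divide out the common factor $\prod_{i\in E}F(\lambda_i)$, where $F(\lambda)=\prod_{j=0}^{n-1}P^{(j)}(\lambda)$; since $\CC[a,\lambda]$ is an integral domain, this division preserves linear independence among the $Q_k$ while eliminating the $\lambda_i$ dependence for $i\in E$. For $s_i=1$ rows, I would stratify further by the column $j_i$ of the single one and factor out $\prod_{j\neq j_i}P^{(j)}(\lambda_i)$; what remains is a single factor $P^{(j_i)}(\lambda_i)$, which, as a polynomial in $\lambda_i$ for fixed $P$, has degree $n-j_i\leq n$ and vanishes at $n-j_i$ specific roots. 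This should reduce the effective per-row factor to $O(n)$ rather than $n^2/2$.

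The main obstacle is making this singleton-row factorization rigorous, so that the residual linear-independence bound really is of order $n$ per such row. The monomial basis in $(a_0,\ldots,a_{n-1},\lambda_1,\ldots,\lambda_m)$ obscures the algebraic fact that, once $P$ is fixed, $\lambda_i$ can only equal one of $n-j_i$ specific roots of $P^{(j_i)}$; translating this into a dimension bound likely requires switching to a basis adapted to the derivative filtration, or working on a partial compactification where roots of $P^{(j)}$ are coordinates rather than the coefficients $a_\ell$. If the factorization route does not directly yield the desired $n^m$ savings, a more drastic alternative would be to parametrize equivalence classes of polynomials $P$ by the joint root-structure of $P,P',\ldots,P^{(n-1)}$, using Rolle-type inequalities as constraints and Bezout-type bounds to count compatible $\Lambda$.
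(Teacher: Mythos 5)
This statement is not a theorem of the paper: it appears verbatim as a \emph{Conjecture} in the paper's Future Work section, explicitly flagged as open. The paper proves only the two-sided bound of Theorem~\ref{thm:normalmbounds} (lower bound $n^m\binom{mn}{n}$, upper bound $n^{2m}\binom{mn}{n}$ inside the logarithm) and the matching asymptotic of Theorem~\ref{thm:correct} in the restricted regime $m=o(n)$. So there is no proof in the paper to compare your attempt against; you are attempting an open problem, and the question is whether your sketch closes the gap. It does not, for the reason below.

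Your stratification handles the rows $i$ with $s_i\in\{0,1\}$ cleanly (and in fact better than you claim: once you factor out $\prod_{j\neq j_i}P^{(j)}(\lambda_i)$ from a singleton row, \emph{nothing} in $\lambda_i$ remains, since $P^{(j_i)}(\lambda_i)$ was never a factor of $Q_k = \prod_{(i,j)\notin S_k}P^{(j)}(\lambda_i)$ to begin with --- the ``what remains is $P^{(j_i)}(\lambda_i)$'' step is a confusion, though a harmless one). The genuine gap is the rows with $s_i\ge 2$. For such a row with ones at columns $j_1<\dots<j_{s_i}$, the residual $\lambda_i$-degree of $Q_k$ is $\frac{n(n+1)}{2}-\sum_t(n-j_t)$, and if the $j_t$ sit near $n$ (say $j_t=n-t$), this is $\frac{n(n+1)}{2}-\frac{s_i(s_i+1)}{2}$, still $\Theta(n^2)$ for bounded $s_i$. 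Your savings here are \emph{not} ``substantial''; they are $O(s_i^2)$ out of $\Theta(n^2)$. If $q$ of the rows have $s_i\ge 2$, the per-stratum monomial count is still $\binom{mn}{n}\paren{\frac{n^2+n}{2}}^q\cdot n^s$, and the number of strata multiplies in another $n^s$ at best. The worst case $q\approx m$ is admissible whenever $m\le\frac{n(n+1)}{4}$ (since the total number of ones can reach $\frac{n(n+1)}{2}\ge 2m$), and there your bound degenerates back to $n^{2m}\binom{mn}{n}$. To salvage this route you would need to exploit additional structure on the $\ge 2$-ones rows --- e.g.\ the multiplicity constraints that forbid too many rows with ones in adjacent high columns (a row with ones at $j$ and $j+1$ forces a double root of $P^{(j)}$, depleting its root budget), or the global column-count constraint $\sum_i s_i\le\frac{n(n+1)}{2}$ used in a way that ties $q$ to the amount of degree reduction --- but your sketch does not engage with these, and without them the bound does not improve.
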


If this conjecture is true, then if $m = \omega(n^2)$, $\log \abs{\D_n^m} \sim \log \paren{\dbinom{m}{\frac{n^2+n}{2}} \cdot \abs{\D_n^{\frac{n^2+n}{2}}}}$ follows from the inequalities in Corollary \ref{cor:bigmAsymptotics}, since Stirling's approximation on the formula for $V\paren{\frac{n^2+n}{2},n}$ in Proposition \ref{prop:vvalues} would give $\log\paren{ V\paren{\frac{n^2+n}{2},n}} \sim  \log \abs{\D_n^{\frac{n^2+n}{2}}}$.

\

Our proof of upper bounds on $\D_n^m$ for $m \leq \frac{n^2+n}{2}$ gives no information about the elements of the set $\D_n^m$. A natural further direction to investigate is the properties of the matrices in $\D_n^m$.

\begin{question}
	In terms of $m$ and $n$, how many entries can be ones?
\end{question}

\section{Acknowledgments}

This research was conducted at the University of Minnesota Duluth REU, and was supported by Jane Street Capital, the NSA (grant number H98230-22-1-0015), and the CYAN Undergraduate Mathematics Fund at MIT.

\

We would like to thank Noah Kravitz for his helpful discussions and guidance throughout the research and writing process. We are also grateful to Brian Lawrence for valuable feedback on this paper, and Sean Li for helpful discussions. Finally, we would like to especially thank Joe Gallian for organizing and running the REU.

\printbibliography

\

\textsc{Department of Mathematics, Massachusetts Institute of Technology, Cambridge, MA
02139}

\textit{Email Address:} \href{mailto:ankitb12@mit.edu}{\texttt{ankitb12@mit.edu}}

\end{document}